\DeclareRobustCommand{\rvdots}{%
  \vbox{
    \baselineskip4\p@\lineskiplimit\z@
    \kern-\p@
    \hbox{.}\hbox{.}\hbox{.}
  }}
\newcommand{\tikzAngleOfLine}{\tikz@AngleOfLine}
\def\tikz@AngleOfLine(#1)(#2)#3{%
\pgfmathanglebetweenpoints{%
\pgfpointanchor{#1}{center}}{%
\pgfpointanchor{#2}{center}}
\pgfmathsetmacro{#3}{\pgfmathresult}%
}
\patchcmd{\@setaddresses}{\indent}{\noindent}{}{}
\patchcmd{\@setaddresses}{\indent}{\noindent}{}{}
\patchcmd{\@setaddresses}{\indent}{\noindent}{}{}
\patchcmd{\@setaddresses}{\indent}{\noindent}{}{}
\newcommand{\gap}{\hspace{1pt}}
\newcommand{\ZZ}{\mathbb{Z}}
\newcommand{\NN}{\mathbb{N}}
\newcommand{\sfw}{\mathsf{w}}
\newcommand{\scrD}{\mathscr{D}}
\DeclareMathOperator{\GKdim}{GKdim}
\newcommand{\leftgr}{\text{\normalfont{-grmod}}}
\newcommand{\leftGr}{{\text{\normalfont-GrMod}}}
\newcommand{\Gr}{\text{\normalfont{GrMod-}}}
\newcommand{\gr}{\text{\normalfont{grmod-}}}
\DeclareMathOperator{\im}{im}
\DeclareMathOperator{\gldim}{gl.\hspace{-1pt}dim}
\DeclareMathOperator{\idim}{i.\hspace{-1pt}dim}
\DeclareMathOperator{\End}{End}
\DeclareMathOperator{\Spec}{Spec}
\DeclareMathOperator{\Hom}{Hom}
\DeclareMathOperator{\Ext}{Ext}
\DeclareMathOperator{\Tor}{Tor}
\DeclareMathOperator{\GL}{GL}
\DeclareMathOperator{\SL}{SL}
\DeclareMathOperator{\Tr}{Tr}
\DeclareMathOperator{\hdet}{hdet}
\DeclareMathOperator{\sspan}{span}
\DeclareMathOperator{\Autgr}{Aut_{gr}}
\DeclareMathOperator{\Aut}{Aut}
\newcommand{\id}{\text{\normalfont id}}
\newcommand{\hash}{\hspace{1pt} \# \hspace{1pt}}
\numberwithin{equation}{section}
\theoremstyle{definition}
\newtheorem{defn}[equation]{Definition}
\newtheorem{example}[equation]{Example}
\theoremstyle{plain}
\newtheorem{thm}[equation]{Theorem}
\newtheorem{prop}[equation]{Proposition}
\newtheorem{lem}[equation]{Lemma}
\newtheorem{cor}[equation]{Corollary}
\newtheorem{hypothesis}[equation]{Hypothesis}
\theoremstyle{remark}
\newtheorem{rem}[equation]{Remark}
\newcommand\sbullet[1][.5]{\mathbin{\ThisStyle{\vcenter{\hbox{%
  \scalebox{#1}{$\SavedStyle\bullet$}}}}}%
}
\newcommand\restr[2]{{
  \left.\kern-\nulldelimiterspace 
  #1 
  \right|_{#2} 
  }}
\newcounter{sarrow}
\newcounter{darrow}
\renewcommand*\env@matrix[1][\arraystretch]{%
  \edef\arraystretch{#1}%
  \hskip -\arraycolsep
  \let\@ifnextchar\new@ifnextchar
  \array{*\c@MaxMatrixCols c}}
\def\bib{\ifthenelse{\boolean{printBibInSubfiles}}
           { \bibliographystyle{amsalpha} \bibliography{thesisbib} }
       {}
}
\tikzset{
  column sep/.code=\def\pgfmatrixcolumnsep{\pgf@matrix@xscale*(#1)},
  row sep/.code   =\def\pgfmatrixrowsep{\pgf@matrix@yscale*(#1)},
  matrix xscale/.code=%
    \pgfmathsetmacro\pgf@matrix@xscale{\pgf@matrix@xscale*(#1)},
  matrix yscale/.code=%
    \pgfmathsetmacro\pgf@matrix@yscale{\pgf@matrix@yscale*(#1)},
  matrix scale/.style={/tikz/matrix xscale={#1},/tikz/matrix yscale={#1}}}
\def\pgf@matrix@xscale{1}
\def\pgf@matrix@yscale{1}
\definecolor{mygray}{gray}{0.8}
\def\dotfill#1{\cleaders\hbox to #1{.}\hfill}
\def\myrulefill{\leavevmode\leaders\hrule height .7ex width 1ex depth -0.6ex\hfill\kern\z@}
\title{Superpotentials and Quiver Algebras for Semisimple Hopf Actions}
\author{Simon Crawford}
\address{The University of Manchester, Alan Turing Building, Oxford Road, Manchester, M13 9PL, United Kingdom}
\email{simon.crawford@manchester.ac.uk}
\date{\today}
\subjclass[2010]{16S35, 16T05, 16W22.}
\begin{document}
\begin{abstract}
We consider the action of a semisimple Hopf algebra $H$ on an $m$-Koszul Artin--Schelter regular algebra $A$. Such an algebra $A$ is a derivation-quotient algebra for some twisted superpotential $\sfw$, and we show that the homological determinant of the action of $H$ on $A$ can be easily calculated using $\sfw$. Using this, we show that the smash product $A \hash H$ is also a derivation-quotient algebra, and use this to explicitly determine a quiver algebra $\Lambda$ to which $A \hash H$ is Morita equivalent, generalising a result of Bocklandt--Schedler--Wemyss. We also show how $\Lambda$ can be used to determine whether the Auslander map is an isomorphism. We compute a number of examples, and show how several results for the quantum Kleinian singularities studied by Chan--Kirkman--Walton--Zhang follow using our techniques.
\end{abstract}
\maketitle

\section{Introduction}

In representation theory, it is a common technique to express a $\Bbbk$-algebra $A$ as (the path algebra of) a quiver with relations in order to study properties of $A$. This approach has proved most fruitful in the study of finite-dimensional algebras since, from the point of view of representation theory, every finite-dimensional $\Bbbk$-algebra can be expressed in this way. Moreover, there has been considerable success in the past few decades in expressing many interesting families of infinite-dimensional algebras as quivers with relations, and using these descriptions to deduce representation-theoretic or geometric properties of associated algebras. \\
\indent As an example, if $G$ is a finite subgroup of $\SL(2,\Bbbk)$ acting naturally on a polynomial ring $R \coloneqq \Bbbk[u,v]$, then the invariant ring $R^G$ is called a Kleinian singularity, and these rings are of considerable interest in ring theory, representation theory, and geometry. In \cite{reitenvdb}, it was shown that a closely-related algebra called the skew group algebra, denoted $R \hash G$, is Morita equivalent to a certain quiver with relations, the preprojective algebra $\Pi(Q)$ of an extended Dynkin quiver. This result forms a component of the Auslander--McKay correspondence, and can be used to deduce representation-theoretic properties of $R^G$ and $R \hash G$, and geometric properties of the singular variety $\Spec R^G$ and its minimal resolution. For example, the maximal Cohen-Macaulay $R^G$-modules can be studied using $\Pi(Q)$, and it is possible to construct the minimal resolution of $\Spec R^G$ by applying quiver GIT to $\Pi(Q)$. This result of Reiten--Van den Bergh was later extended by Bocklandt--Schedler--Wemyss in \cite{bsw}, where it was shown that if $G$ is a finite subgroup of $\GL(n,\Bbbk)$ acting on a polynomial ring $R \coloneqq \Bbbk[x_1, \dots, x_n]$, then the skew group algebra $R \hash G$ is Morita equivalent to a quiver with relations. \\
\indent In recent years, there has been a strong interest in generalising results from commutative invariant theory, such as those comprising the Auslander--McKay correspondence, to a noncommutative setting; see \cite{kirkmansurvey} for a survey of recent results. A common approach in noncommutative invariant theory is to replace the polynomial ring $R$ by an Artin--Schelter (AS) regular algebra $A$, and the finite group $G \leqslant \GL(n,\Bbbk)$ by a semisimple Hopf algebra $H$. It is then possible to define an invariant ring $A^H$, as well as an analogue of the skew group algebra, called the smash product, denoted $A \hash H$. One can then study properties of $A^H$ and $A \hash H$, with $A^H$ playing the role of the coordinate ring of a noncommutative (often singular) variety. Seeking to better understand the invariant rings that arise in this way, the main goal of this paper is to prove a generalisation of the result of Bocklandt--Schedler--Wemyss to this setting. \\
\indent Our first result provides a key stepping stone in this direction. We recall that, in the commutative setting, the determinant of the elements of $G$ often controls properties of $R^G$ and $R \hash G$. For example, Watanabe's Theorem says that if $G$ is a finite subgroup of $\SL(n,\Bbbk)$ (i.e.\ every element of $G$ has trivial determinant) then the invariant ring $R^G$ is Gorenstein. The determinant has a noncommutative analogue, called the \emph{homological determinant}, which has been shown to control similar properties of $A^H$ and $A \hash H$; see \cite{jorgensen, kkzGor, ckwzi}, for example. Despite its ubiquity in noncommutative invariant theory, the homological determinant is notoriously difficult to compute. Our first result gives a method to compute the homological determinant for the action of a Hopf algebra on a large family of AS regular algebras, which extends \cite[Theorem 3.3]{mkoszul} and \cite[Theorem 2.1]{ckwzbin}. 

\begin{thm}[{Theorem \ref{hdetthm}}] \label{hdetintrothm}
Suppose that $A$ is an $m$-Koszul AS regular algebra, generated in degree 1, and that $H$ is a semisimple Hopf algebra acting inner-faithfully and homogeneously on $A$, so that $A$ is a left $H$-module algebra. Associated to $A$ is a twisted superpotential $\sfw \in V^{\otimes \ell}$ for some $\ell$, where $V = A_1$. The homological determinant $\hdet_A : H \to \Bbbk$ of the action of $H$ on $A$ satisfies
\begin{align*}
h \cdot \sfw = \hdet_A(h) \sfw,
\end{align*}
for all $h \in H$.
\end{thm}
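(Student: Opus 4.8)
The plan is to unravel both the definition of the homological determinant and the construction of the twisted superpotential $\sfw$, and then observe that the former is read off from exactly the same piece of homological data that defines the latter. Recall that for an $m$-Koszul AS regular algebra $A$ of global dimension $d$, the minimal free resolution of the trivial module $\Bbbk$ terminates with a term $A(-\ell)$ (for the appropriate Gorenstein shift $\ell$), and the superpotential $\sfw \in V^{\otimes \ell}$ arises as the image of a generator of this top term under the identification of the relevant $\Tor$-space with a line inside $V^{\otimes \ell}$; concretely, $\sfw$ spans $\Tor^A_d(\Bbbk,\Bbbk)^* \hookrightarrow V^{\otimes \ell}$ up to scalar. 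On the other hand, the homological determinant is \emph{defined} via the induced action of $H$ on the local cohomology module $\HH^d_{\frakm}(A)$, or equivalently (by local duality and AS regularity) via the action on $\Ext^d_A(\Bbbk, A) \cong \Bbbk(\ell)$: one has $\hdet_A(h)$ recording the scalar by which $h$ acts on this one-dimensional space, suitably normalised.

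First I would make precise the $H$-equivariance of the minimal free resolution $P_\bullet \to \Bbbk$. Since $H$ acts homogeneously on $A$ and $\Bbbk = A/A_{\geq 1}$ is an $H$-module (via the counit/augmentation compatibility coming from $A$ being an $H$-module algebra), minimality of the resolution forces each $P_i = A \otimes \Tor^A_i(\Bbbk,\Bbbk)$ to carry a canonical $H$-module structure making all differentials $H$-linear; this is the standard functoriality of minimal resolutions over a connected graded algebra with a group-like (here Hopf) symmetry. In particular $\Tor^A_d(\Bbbk,\Bbbk)$ is a one-dimensional $H$-module, hence $H$ acts on it by an algebra map $\chi : H \to \Bbbk$. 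Second, I would identify this character $\chi$ with $\hdet_A$: applying $\Hom_A(-,A)$ to $P_\bullet$ computes $\Ext^\bullet_A(\Bbbk,A)$, and AS regularity gives $\Ext^d_A(\Bbbk,A) \cong \Tor^A_d(\Bbbk,\Bbbk)^* \otimes (\text{shift})$ as $H$-modules, so the action on $\Ext^d_A(\Bbbk,A)$ — which is precisely what defines $\hdet_A$ — is $\chi^{-1}$ or $\chi$ up to the antipode/duality bookkeeping. Getting this bookkeeping exactly right (whether $\hdet$ corresponds to the action on $\Tor^A_d$ or its dual, and how the antipode $S$ of $H$ enters when dualising an $H$-module) is the step I expect to be the main obstacle; I would pin it down by cross-checking against the known group case $H = \Bbbk G$, where $\hdet_A(g)$ must reduce to the usual determinant on $V$ when $A$ is a polynomial ring.

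Third, I would match $\chi$ with the action on $\sfw$. By the construction recalled above, the inclusion $\Tor^A_d(\Bbbk,\Bbbk) \hookrightarrow V^{\otimes \ell}$ (dually, the line spanned by $\sfw$) is $H$-equivariant, because it is built functorially from the $H$-equivariant resolution $P_\bullet$ and the $H$-equivariant multiplication maps $V^{\otimes i} \to A_i$. Hence $h$ acts on the line $\Bbbk\sfw \subseteq V^{\otimes \ell}$ by the same scalar $\chi(h)$ by which it acts on $\Tor^A_d(\Bbbk,\Bbbk)$ (or its dual, matching whichever convention the identification in the previous step produced), i.e.
\begin{align*}
h \cdot \sfw = \chi(h)\gap \sfw = \hdet_A(h)\gap \sfw
\end{align*}
for all $h \in H$. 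I would close by noting that since both $\hdet_A$ and $\chi$ are algebra homomorphisms $H \to \Bbbk$ agreeing on the one-dimensional data, it suffices to verify the identity on a spanning set of $H$, and that the semisimplicity hypothesis guarantees the relevant modules are genuinely one-dimensional summands rather than merely having one-dimensional composition factors, so no extension issues arise. The only genuinely delicate point remains the normalisation/antipode comparison in the middle step; everything else is functoriality of minimal resolutions plus the definition of $\sfw$.
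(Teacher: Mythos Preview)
Your approach is essentially the same as the paper's: identify the one-dimensional top homology group of the Koszul resolution with the line $\Bbbk\sfw$ in an $H$-equivariant way, and match characters. The paper executes this by working directly with $\Ext^d_A(\Bbbk,\Bbbk)$, which is the paper's chosen definition of $\hdet_A$ (the action there gives a character $\eta$, and $\hdet_A \coloneqq \eta \circ S$). The identification is $\Ext^d_A(\Bbbk,\Bbbk) \cong A^!_\ell \cong \mathsf{W}^*$ where $\mathsf{W} = \Bbbk\sfw \subseteq V^{\otimes \ell}$, and this comes straight from the Koszul complex with no appeal to $\Ext^d_A(\Bbbk,A)$ or local duality.

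The antipode bookkeeping you flagged as the main obstacle is exactly the content of the paper's proof, and it is dispatched in two lines using semisimplicity ($S^2 = \id_H$): if $h \cdot \sfw = \lambda \sfw$, then for $f \in \mathsf{W}^*$ one has $(S(h)\cdot f)(\sfw) = f(S^2(h)\cdot\sfw) = f(h\cdot\sfw) = \lambda f(\sfw)$, so $S(h)$ acts on $\mathsf{W}^* \cong \Ext^d_A(\Bbbk,\Bbbk)$ by $\lambda$, whence $\lambda = \eta(S(h)) = \hdet_A(h)$. Your suggestion to cross-check against $H = \Bbbk G$ is a useful sanity check but not needed as an argument. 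One small correction: in the paper's conventions $\sfw$ spans $\mathsf{W} \cong (A^!_\ell)^*$, which is $\Tor^A_d(\Bbbk,\Bbbk)$ rather than its dual, so your first paragraph has the duality reversed (and your third paragraph partly corrects this); routing through $\Ext^d_A(\Bbbk,A)$ only adds an unnecessary layer.
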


\indent In \cite{ckwzbin}, Chan--Kirkman--Walton--Zhang studied actions of semisimple Hopf algebras $H$ on $2$-dimensional AS regular algebras such that the $H$-action on $A$ had \emph{trivial} homological determinant, and provided a complete classification of such actions. This condition on the homological determinant is analogous to the fact that, for Kleinian singularities, we require $G$ to be a subgroup of $\SL(2,\Bbbk)$ rather than $\GL(2,\Bbbk)$. In light of this, in the sequel \cite{ckwzi}, the authors called the resulting invariant rings $A^H$ \emph{quantum Kleinian singularities}, and showed that they had many properties in common with (commutative) Kleinian singularities. In particular, they began to develop aspects of the Auslander--McKay correspondence for quantum Kleinian singularities. \\
\indent It would therefore be desirable to have a version of the result of Reiten--Van den Bergh for quantum Kleinian singularities; better still, a generalisation of the result of Bocklandt--Schedler--Wemyss to the setting of Hopf algebra actions would aid our understanding of noncommutative invariant rings in higher dimensions, which is currently relatively unexplored. The first of these goals was achieved in the author's PhD thesis, \cite[Theorem 7.2.1]{simon}, provided that $H = \Bbbk G$ was a group algebra. The main result of this paper achieves the second of these goals, which also deals with the cases that were omitted from \cite{simon}.

\begin{thm}[Theorem \ref{pathalgthm}] \label{intropathalg}
Assume that $A$ and $H$ satisfy the hypotheses of Theorem \ref{hdetintrothm}. Then $A \hash H$ is Morita equivalent to an algebra $\Lambda$, which is the path algebra of a quiver with relations. Moreover, there is a vertex $0$ such that $A^H \cong e_0 \Lambda e_0$, where $e_0$ is the vertex idempotent corresponding to vertex $0$. The relations in $\Lambda$ are obtained from a twisted superpotential and can be written down explicitly, and the twist depends on the homological determinant of the action of $H$ on $A$.
\end{thm}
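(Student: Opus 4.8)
The strategy is to combine the homological determinant formula from Theorem \ref{hdetintrothm} with the classical structure theory of smash products by semisimple Hopf algebras. First I would invoke semisimplicity of $H$ to decompose the regular representation (or rather, a suitable projective generator of $A \hash H$) into indecomposables indexed by the irreducible $H$-modules $S_0, S_1, \dots, S_r$, with $S_0$ the trivial module. Setting $e_i$ to be the idempotent corresponding to $S_i$, one has $A \hash H \simeq \bigoplus_i (A \otimes S_i)^{\oplus \dim S_i}$ as a left module over itself, so $A \hash H$ is Morita equivalent to $\Lambda \coloneqq \End_{A \hash H}\big(\bigoplus_i A \otimes S_i\big)^{\op}$, and the $(i,j)$-component of $\Lambda$ is $\Hom_{A \hash H}(A \otimes S_i, A \otimes S_j) \cong (A \otimes \Hom_\Bbbk(S_i, S_j))^H$. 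The vertex $0$ is the one attached to $S_0$, and $e_0 \Lambda e_0 \cong (A \otimes \End_\Bbbk(S_0))^H = A^H$, giving the claimed identification essentially for free. This reduces the problem to understanding $\Lambda$ as a path algebra with relations, which is where the superpotential enters.

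Next I would identify the quiver $Q$: by standard representation-theoretic bookkeeping (a McKay-type construction), the arrows from $i$ to $j$ are a basis of $\Hom_H(S_j, V \otimes S_i)$, i.e.\ the multiplicity of $S_j$ in $V \otimes S_i$ where $V = A_1$ is the space of generators. The path algebra $\Bbbk Q$ then surjects onto $\Lambda$ because $A$ is generated in degree $1$; the content is in identifying the kernel. Here the key input is that $A = \mathcal{D}(\sfw)$ is a derivation-quotient algebra of a twisted superpotential $\sfw \in V^{\otimes \ell}$. I would show that $A \hash H$ is itself a derivation-quotient algebra: concretely, the $H$-action on $V$ makes $V \otimes H$ (or the appropriate bimodule over $\Bbbk Q_0 = \bigoplus_i \End_\Bbbk(S_i)$) carry an induced superpotential, and because of Theorem \ref{hdetintrothm} the element $\sfw$ transforms by the scalar $\hdet_A(h)$ — this is exactly the ``twist'' referred to in the statement. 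Translating $\sfw$ through the Morita equivalence and the decomposition into isotypic components yields an explicit twisted superpotential on $Q$ whose cyclic derivatives cut out the relations of $\Lambda$.

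The main obstacle is the last step: showing that the relations obtained by taking cyclic partial derivatives of the transported superpotential generate \emph{exactly} the kernel of $\Bbbk Q \twoheadrightarrow \Lambda$, not merely a subset of it. The cleanest route is to avoid a direct generators-and-relations computation and instead argue via the minimal projective resolution. Since $A$ is $m$-Koszul AS regular of global dimension $d$, it has a self-dual ``Koszul-type'' bimodule resolution built from $\sfw$; applying the exact functor $- \hash H$ and then the Morita equivalence produces a finite projective resolution of $\Lambda/\rad$-modules (or of $\Lambda$ as a bimodule) of the same length, and the $m$-Koszulity is preserved because $H$ is semisimple (so $-\hash H$ is exact and preserves projectivity). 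Comparing the degree-$2$ through degree-$m$ terms of this resolution with the ideal generated by the cyclic derivatives — both are governed by the same combinatorial data coming from $\sfw$ and the branching rules for $V \otimes S_i$ — forces the ideal of relations to be precisely the derivation ideal. I would also need to check that the twist on the superpotential for $\Lambda$ is the automorphism induced by combining the Nakayama-type twist of $A$ with $\hdet_A$, which follows by tracking how the twisting automorphism of $\sfw$ interacts with the $H$-isotypic decomposition; this is bookkeeping but must be done carefully to get the statement about the twist ``depending on the homological determinant'' exactly right.
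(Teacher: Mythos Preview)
Your architecture matches the paper's: show $A \hash H$ is itself a derivation-quotient algebra over $H$ (this is the paper's Theorem \ref{smashisderquot}), pass to a basic algebra via idempotents indexed by the irreducible $H$-modules, identify the quiver as the McKay quiver, and read off $A^H \cong e_0\Lambda e_0$. You also correctly locate the main obstacle --- showing that the derivation ideal of the transported superpotential is \emph{exactly} the ideal of relations, not merely contained in it.

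Where you diverge, and where the proposal thins out, is in how that obstacle is overcome. You propose transporting the $m$-Koszul resolution along $-\hash H$ and ``comparing degree-$2$ through degree-$m$ terms'', but knowing that $\Lambda$ has a resolution of the right shape does not by itself exhibit a twisted superpotential in the sense of Definition \ref{superpotentialdefn2}, nor does it identify the relations as $\im\partial^{\ell-m}_\Phi$ for the specific element $\Phi = e\Psi(\sfw\hash 1)e$. The paper instead proceeds by direct computation in $T_H(V\otimes H)$: it fixes the nondegenerate trace (the normalised integral for $H^*$), explicitly computes the maps $R\phi$ and $L\phi$ defining the pairing $\llbracket-,-\rrbracket$ (Proposition \ref{RLprop}), verifies by hand the twisted-superpotential identity $\llbracket\phi^{\nu^*},\Psi(\sfw\hash 1)\rrbracket = \llbracket\Psi(\sfw\hash 1),\phi\rrbracket$ with twist $\nu = \sigma\otimes{}_{\hdet}\Xi$, and then checks the equality of ideals $\Psi(\im\partial_\sfw^{\ell-m}\hash H) = \im\partial_{\Psi(\sfw\hash 1)}^{\ell-m}$ on a basis. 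This occupies all of Section \ref{derquotsec}; the introduction explicitly notes that the bare existence of a quiver-with-relations presentation already follows from \cite{rrz} and \cite{reyesrog}, so the explicit form of the relations is the entire point. You also pass over a subtlety that costs the paper some effort: the idempotents $e_i$ must be chosen so that the set $\{e_i\}$ is closed under the winding automorphism ${}_{\hdet}\Xi$, otherwise the passage from the superpotential on $A\hash H$ to one on $e(A\hash H)e$ via \cite[Lemma 2.2]{bsw} breaks down. (Minor point: the relations are obtained by \emph{left} partial derivatives of length $\ell-m$, not cyclic derivatives.)
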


\indent We remark that the fact that one is able to express $A \hash H$ as a quiver with relations is not new; this follows by combining \cite[Theorem 4.1]{rrz} with \cite[Lemma 3.4]{reyesrog}. However, this approach is very inexplicit; it is nontrivial to determine both the quiver and the relations in this way. On the other hand, the proof of Theorem \ref{intropathalg} shows that both the underlying quiver of $\Lambda$ and the defining relations can be easily calculated from the $H$-action on $A$. \\
\indent As a precursor to Theorem \ref{intropathalg}, we first need show that the smash product $A \hash H$ is a \emph{derivation-quotient algebra}, a technical notion which is defined in Section \ref{prelimsec}. The majority of the effort in this paper is expended in proving the following result, from which Theorem \ref{intropathalg} follows relatively quickly:

\begin{thm}[Theorem \ref{smashisderquot}]
Assume that $A$ and $H$ satisfy the hypotheses of Theorem \ref{hdetintrothm}. Then $A \hash H$ is a derivation-quotient algebra for some twisted superpotential, where the twist depends on the homological determinant of the action of $H$ on $A$.
\end{thm}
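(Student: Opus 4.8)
The plan is to exhibit $A \hash H$ as a derivation-quotient algebra over the semisimple base $S \coloneqq H$, with a twisted superpotential obtained from $\sfw$ by adjoining $H$. Write $A = T_\Bbbk(V)/(R)$ with $V = A_1$ and $R \subseteq V^{\otimes m}$ its space of relations; since $A$ is $m$-Koszul AS regular, $A$ is the derivation-quotient algebra $D(\sfw,\ell-m)$ of its twisted superpotential $\sfw \in V^{\otimes \ell}$, meaning that $R = \partial^{\ell-m}\sfw$ is the span of all $(\ell-m)$-fold partial derivatives of $\sfw$. Because the $H$-action on $A$ is homogeneous, $R$ is an $H$-submodule of $V^{\otimes m}$, and by Theorem~\ref{hdetintrothm} the superpotential itself is an eigenvector, $h \cdot \sfw = \hdet_A(h)\,\sfw$ for all $h \in H$. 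This last identity is the engine of the whole argument.

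First I would set up the relevant $S$-bimodule. Let $W \coloneqq V \otimes H$, the degree-$1$ component of $A \hash H$, regarded as an $H$-bimodule via $h \cdot (v \otimes k) \cdot h' = \sum (h_{(1)} \cdot v) \otimes h_{(2)} k h'$. A routine check using the coproduct of $H$ produces an isomorphism of $S$-bimodules $W^{\otimes_S j} \cong V^{\otimes j} \otimes H$ for every $j$, under which left multiplication by $H$ twists the $V$-factors by iterated comultiplication and right multiplication acts on the trailing copy of $H$. The next step is a lemma, in the spirit of \cite{rrz, reyesrog}, that smash products commute with quotients by homogeneous $H$-stable ideals: under the above identification, $A \hash H \cong T_S(W)/(\mathcal{R})$, where $\mathcal{R} \subseteq W^{\otimes_S m}$ corresponds to $R \otimes H$. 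Thus $A \hash H$ is an $m$-homogeneous algebra over the semisimple ring $S$, generated in degree $1$, and it remains only to realise $R \otimes H$ as a space of partial derivatives of a superpotential living in $W^{\otimes_S \ell} \cong V^{\otimes \ell} \otimes H$.

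I would then take the candidate $\widetilde\sfw \in W^{\otimes_S \ell}$ to be the element corresponding to $\sfw \otimes 1_H$, and verify two things. First, transporting the partial-derivative (contraction-over-$S$) maps through the identifications $W^{\otimes_S j} \cong V^{\otimes j} \otimes H$ turns them into the ordinary partial derivatives on $V^{\otimes j}$ tensored with operations that only apply $H$-actions to the $V$-factors; since $R$ is $H$-stable these operations neither enlarge nor shrink the span, so $\partial^{\ell-m}\widetilde\sfw$ corresponds to $(\partial^{\ell-m}\sfw) \otimes H = R \otimes H = \mathcal{R}$, and hence $A \hash H = T_S(W)/(\partial^{\ell-m}\widetilde\sfw) = D(\widetilde\sfw,\ell-m)$. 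Second, $\widetilde\sfw$ is genuinely a twisted superpotential: applying the twisted-cyclic operator to $\widetilde\sfw$ amounts, after transporting to $V^{\otimes \ell} \otimes H$, to cyclically rotating $\sfw$ and pushing the trailing copy of $H$ back to the front past all the $V$-factors. The rotation of $\sfw$ contributes the Nakayama automorphism $\nu_A$ of $A$ (by the twisted cyclicity of $\sfw$), while moving $H$ across the $V$-factors contributes, via $h \cdot \sfw = \hdet_A(h)\,\sfw$, a factor built from the character $\hdet_A$. Combining the two identifies the twist of $\widetilde\sfw$ as the Nakayama automorphism of $A \hash H$ — which is $\nu_A$ on the $A$-part and a $\hdet_A$-twist of $H$ on the $H$-part — so the twist indeed depends only on $\sfw$ and $\hdet_A$, as claimed.

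The main obstacle is the bookkeeping underlying the previous paragraph: one must pin down the isomorphisms $W^{\otimes_S j} \cong V^{\otimes j} \otimes H$ so that they are simultaneously compatible with (a) the multiplication of $A \hash H$, (b) the twisted-cyclic operator defining superpotentials, and (c) the partial-derivative maps defining derivation-quotient algebras, and then track precisely how the comultiplication of $H$ reshuffles tensor factors, and where each copy of $\hdet_A$ is produced, when a trailing $H$-factor is transported around a cyclic word. The interplay between the coproduct of $H$, which scrambles factors when an $H$-element is moved past a $V$-element, and the cyclic rotation in the superpotential is the delicate point; Theorem~\ref{hdetintrothm} is exactly what is needed to keep it under control, which is why it is established first.
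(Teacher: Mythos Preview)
Your outline is the paper's own strategy: pass to the $H$-bimodule $W = V\otimes H$, identify $T_H(W)$ with $T_\Bbbk(V)\hash H$ via the obvious map, take $\widetilde\sfw$ to be the image of $\sfw\hash 1$, and check the three things you list. The twisted-weak-potential check and the relation check go through essentially as you say (the paper's Lemma~\ref{twistedlem} and the final lemma of Section~\ref{derquotsec}).

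The gap is in your superpotential verification. Over a noncommutative base $S=H$ there is no ``twisted-cyclic operator'' in the sense you invoke; the relevant condition (Definition~\ref{superpotentialdefn2}) is
\[
\llbracket \psi^{\nu^*},\widetilde\sfw\rrbracket = \llbracket \widetilde\sfw,\psi\rrbracket
\quad\text{for all } \psi\in W^*,
\]
where the brackets are built from maps $R:W^*\to W^{*R}$ and $L:W^*\to W^{*L}$ that depend on a choice of nondegenerate trace $\Tr:H\to\Bbbk$. Your heuristic of ``cyclically rotating $\sfw$ and pushing the trailing copy of $H$ back to the front'' does not engage with this machinery, and you never mention the trace at all. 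The paper makes a specific choice --- $\Tr$ is the integral on $H^*$ normalised to $\Tr(1_H)=1$ --- and then, working in the Artin--Wedderburn basis $\{e_{ij}^{(k)}\}$ of $H$, derives closed formulae for $R\phi^{\nu^*}$ and $L\phi$ on basis elements of $W^*$ (Proposition~\ref{RLprop}); this step uses identities for the integral (Lemma~\ref{integrallemma}) that have no analogue in your sketch. Only then can the superpotential identity be checked directly, and the computation is long: it also needs the move $h_{(2)}(S(h_{(1)})v\otimes 1)=(v\otimes 1)h$, which crucially uses $S^2=\id$, a point you do not flag. In short, the ``bookkeeping'' you acknowledge as the main obstacle is not merely bookkeeping --- without the trace/integral framework you have no definition to verify, and the explicit $R$/$L$ formulae are the missing key lemma.
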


A version of this result was established in \cite[Theorem 4.12]{wuskew}, with the additional restrictions that $H$ is a group algebra and that the homological determinant of the $H$-action on $A$ is trivial. Removing their assumption on the homological determinant is relatively straightforward; on the other hand, replacing the group algebra $\Bbbk G$ by an arbitrary semisimple Hopf algebra $H$ required new techniques. \\
\indent The remainder of the paper is devoted to applications of Theorem \ref{intropathalg} and examples. We outline one such application now. In noncommutative invariant theory, the \emph{Auslander map}, defined in Section \ref{prelimsec}, plays an important role in the representation theory of $A^H$. If this map is an isomorphism, then \cite[Theorems A and C]{ckwzii} shows, in particular, that there is a bijection between irreducible representations of $H$ and maximal Cohen-Macaulay $A^H$-modules, up to a degree shift. For this reason, it is important to know when this map is an isomorphism; a result of Bao--He--Zhang \cite[Theorem 0.3]{bhz} provides a computationally useful criterion to determine when this is the case. Using Theorem \ref{intropathalg}, their result can be reformulated as a statement in terms of $\Lambda$, as follows:

\begin{thm}[Corollary \ref{austhmcor}]
Assume that $A$ and $H$ satisfy the hypotheses of Theorem \ref{hdetintrothm}, and additionally assume that $A$ is GK-Cohen-Macaulay. Let $\Lambda$ be the algebra from Theorem \ref{intropathalg}. Then the Auslander map is an isomorphism if and only if $\GKdim \Lambda/\langle e_0 \rangle \leqslant \GKdim A - 2$.
\end{thm}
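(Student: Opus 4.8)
The plan is to deduce the corollary from the criterion of Bao--He--Zhang \cite{bhz} by transporting it across the Morita equivalence $A \hash H \sim \Lambda$ produced in the proof of Theorem \ref{intropathalg}. Recall the relevant form of \cite[Theorem 0.3]{bhz}: under our hypotheses, together with the assumption that $A$ is GK-Cohen-Macaulay, the Auslander map is an isomorphism if and only if the \emph{pertinency}
\begin{align*}
\mathsf{p}(A,H) \coloneqq \GKdim(A \hash H) - \GKdim\big((A \hash H)/(A \hash H)\,e\,(A \hash H)\big)
\end{align*}
is at least $2$, where $e \in H \subseteq A \hash H$ is the normalised two-sided integral of $H$, equivalently the central idempotent projecting onto the trivial $H$-representation. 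Since $H$ is finite-dimensional, $\GKdim(A \hash H) = \GKdim A$, so the criterion becomes: the Auslander map is an isomorphism if and only if $\GKdim\big((A \hash H)/(A \hash H)\,e\,(A \hash H)\big) \leqslant \GKdim A - 2$. (The GK-Cohen-Macaulay hypothesis is precisely what is needed to invoke \cite{bhz}, and is the same hypothesis under which \cite{ckwzii} compares this map with maximal Cohen-Macaulay modules.)

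Next I would identify $e$ with the vertex idempotent $e_0$ of $\Lambda$. Writing $H \cong \bigoplus_i \mathrm{Mat}_{d_i}(\Bbbk)$ and choosing a primitive idempotent $e_i$ in the $i$th block, indexed so that $e_0 = e$ corresponds to the trivial representation, the algebra $\Lambda$ constructed in the proof of Theorem \ref{intropathalg} is the condensed algebra $\bigoplus_{i,j} e_i (A \hash H) e_j = \varepsilon (A \hash H) \varepsilon$ with $\varepsilon = \sum_i e_i$; this is Morita equivalent to $A \hash H$ because $\bigoplus_i (A \hash H) e_i$ is a progenerator. Under this equivalence the vertex idempotent $e_0 \in \Lambda$ is the image of the integral $e$, and $e_0 \Lambda e_0 = e\,(A \hash H)\,e \cong A^H$, consistent with the statement of Theorem \ref{intropathalg}.

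Finally I would transport the ideal. Morita equivalence induces a lattice isomorphism between the two-sided ideals of $A \hash H$ and of $\Lambda$, matching a two-sided ideal $J$ of $A \hash H$ with $\varepsilon J \varepsilon$; in particular $(A \hash H)\,e\,(A \hash H)$ corresponds to $\Lambda e_0 \Lambda = \langle e_0 \rangle$. This equivalence descends to the quotients, so that $(A \hash H)/(A \hash H)\,e\,(A \hash H)$ and $\Lambda/\langle e_0 \rangle$ are themselves Morita equivalent (indeed the latter is an idempotent corner of a matrix algebra over the former). Since GK-dimension is a Morita invariant, $\GKdim\big((A \hash H)/(A \hash H)\,e\,(A \hash H)\big) = \GKdim\big(\Lambda/\langle e_0 \rangle\big)$, and substituting this into the reformulated Bao--He--Zhang criterion yields the claim.

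The only point needing genuine care --- the main obstacle --- is the bookkeeping in the identification step: one must check that the vertex labelled $0$ in Theorem \ref{intropathalg} (characterised there by $e_0 \Lambda e_0 \cong A^H$) is indeed the one attached to the integral of $H$, and that the idempotent featuring in \cite{bhz} is this same element. The latter is harmless even if a different primitive idempotent of the trivial block is used in \cite{bhz}, since any two such idempotents are conjugate in $A \hash H$ and hence generate the same two-sided ideal. Everything else is a formal consequence of the Morita invariance of GK-dimension and of the two-sided ideal lattice.
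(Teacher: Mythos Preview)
Your proposal is correct and follows essentially the same route as the paper: both reduce the question to the Bao--He--Zhang criterion (Theorem~\ref{introbhz}) by identifying $e_0$ with the integral idempotent and showing that $\GKdim\big((A \hash H)/\langle e_0 \rangle\big) = \GKdim\big(\Lambda/\langle e_0 \rangle\big)$ via the Morita equivalence $\Lambda = \varepsilon(A\hash H)\varepsilon$. The only cosmetic difference is that the paper establishes this equality of GK dimensions by an explicit tensor-product computation together with two applications of \cite[Proposition~3.14]{mandr}, whereas you invoke the Morita invariance of GK dimension and of the two-sided ideal lattice directly; both are valid justifications of the same step.
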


Theorem \ref{intropathalg} also has applications to the study of maximal Cohen-Macaulay modules over the invariant rings $A^H$; see Section \ref{mcmsec}. These results will be used in forthcoming work to study the Auslander--Reiten theory of two-dimensional noncommutative singularities. \\
\indent In the final section, we compute a number of examples, with a particular emphasis on the quantum Kleinian singularities of Chan--Kirkman--Walton--Zhang. In \cite{ckwzi}, the authors expended a great deal of effort showing that the Auslander map corresponding to a quantum Kleinian singularity $A^H$ is an isomorphism and that, in a number of cases, $A^H$ is isomorphic to a commutative Kleinian singularity. Using our techniques, we are able to deduce both of these results quickly; our proof also provides a conceptual reason as to why these results should be true.

\begin{thm}
Assume that $A^H$ is a quantum Kleinian singularity, in the sense of \cite{ckwzi} (see Table \ref{qkstable} in Section \ref{examplessec} for the classification). Then the following hold:
\begin{enumerate}[{\normalfont (1)},topsep=0pt,itemsep=0pt,leftmargin=*]
\item The Auslander map corresponding to $A^H$ is an isomorphism.
\item In cases \emph{{(d)}} ($n$ even), \emph{{(e)}}, and \emph{{(f)}}, the invariant ring $A^H$ is a commutative Kleinian singularity.
\end{enumerate}
\end{thm}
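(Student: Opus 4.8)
The plan is to reduce both statements to an explicit computation of the algebra $\Lambda$ of Theorem~\ref{intropathalg} for each of the families (a)--(f) appearing in Table~\ref{qkstable}, and then to apply Corollary~\ref{austhmcor} for part~(1) and a direct identification for part~(2). By definition, a quantum Kleinian singularity is an invariant ring $A^H$ with $A$ a $2$-dimensional AS regular algebra (so $\GKdim A = 2$, and $A$ is GK-Cohen-Macaulay, so the hypothesis of Corollary~\ref{austhmcor} is met) and $H$ semisimple acting with \emph{trivial} homological determinant; thus Theorem~\ref{hdetintrothm} pins down $\sfw$ up to a trivial twist, and the recipe of Theorem~\ref{intropathalg} produces, case by case, a quiver $Q$ together with relations coming from $\sfw \in V^{\otimes 2}$. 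Concretely, one decomposes $V = A_1$ into simple $H$-modules to obtain the vertices and arrows of $Q$, and then reads off the (twisted) preprojective-type relations from $\sfw$; in each case $Q$ is a small quiver --- an orientation of the double of an extended Dynkin, cyclic, or linear graph.

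For part~(1), with $\Lambda$ in hand it suffices by Corollary~\ref{austhmcor} to show $\GKdim \Lambda/\langle e_0\rangle \leqslant 0$, i.e.\ that $\Lambda/\langle e_0\rangle$ is finite-dimensional. This is checked directly from the explicit presentations: deleting the idempotent $e_0$ kills enough arrows and relations that only boundedly long paths survive, so $\Lambda/\langle e_0\rangle$ is finite-dimensional in every family, and the Auslander map is an isomorphism in each case.

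For part~(2), in cases (d) with $n$ even, (e), and (f) the plan is to construct an explicit isomorphism $\Lambda \cong \Pi(Q')$, where $\Pi(Q')$ is the preprojective algebra of the extended Dynkin quiver $Q'$ of the relevant ADE type. This is done by rescaling the arrows of $Q$ so as to absorb the scalars (powers of the quantum parameter $q$, or of the Jordan parameter) occurring in the $\sfw$-relations, turning them into the standard preprojective relations of $\Pi(Q')$, and by matching the distinguished vertex $0$ with the extending (trivial-representation) vertex of $Q'$. Once this isomorphism is established, Reiten--Van den Bergh's theorem gives a Morita equivalence between $\Pi(Q')$ and $R \hash G$ for the binary polyhedral group $G \leqslant \SL(2,\Bbbk)$ associated to $Q'$, under which $e_0 \Pi(Q') e_0 \cong R^G$; combined with $A^H \cong e_0 \Lambda e_0$ from Theorem~\ref{intropathalg}, this yields $A^H \cong R^G$, a commutative Kleinian singularity.

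The computation of $Q$ and its relations in each family is routine given Theorem~\ref{intropathalg}, as is the finite-dimensionality check in part~(1). I expect the main obstacle to lie in part~(2): finding the precise rescaling of arrows that converts the $\sfw$-relations into the untwisted preprojective relations, and verifying that this is possible exactly in the listed sub-cases --- in the remaining cases one should find either a genuine deformation of the preprojective relations or an incompatibility of the quiver with any extended Dynkin diagram, so that $A^H$ is not commutative.
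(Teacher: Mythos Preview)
Your proposal is correct and follows essentially the same route as the paper. Two minor organisational differences are worth noting. First, for part~(1) in cases (d) ($n$ even), (e), and (f), the paper does not separately verify that $\Lambda/\langle e_0\rangle$ is finite-dimensional; instead, once $\Lambda$ is identified with the preprojective algebra arising from a classical Kleinian singularity, the Auslander isomorphism is transferred directly from the classical case via Theorem~\ref{samelambda} (using \cite{purity}), which simultaneously yields (1) and (2). Your direct check would of course also work, since $\Lambda/\langle e_0\rangle$ is then a preprojective algebra of Dynkin type, known to be finite-dimensional.

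Second, the obstacle you anticipate in part~(2)---rescaling arrows to convert the $\sfw$-relations into standard preprojective relations---is handled in the paper by the general Lemma~\ref{meshalgebralemma}: since the homological determinant is trivial, $\Lambda$ is a mesh algebra with $\tau=\id$, and whenever the McKay quiver is the double of a tree (as in (d) even, (e), (f)) the rescaling goes through automatically. There is no continuous parameter to absorb in these cases (the algebras are $\Bbbk_{-1}[u,v]$ or $\Bbbk\langle u,v\rangle/\langle u^2-v^2\rangle$, not $\Bbbk_q[u,v]$); the tree hypothesis is precisely what makes the signs absorbable, and its failure in the cyclic case (b)/(g) is why one obtains a genuine $q$-deformation there rather than a commutative singularity.
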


\noindent\textbf{Acknowledgements.} The author is a Heilbronn fellow at the University of Manchester. Portions of this work were completed at the University of Waterloo while the author was a postdoctoral fellow, and at the University of Washington while the author was in receipt of the Cecil King Travel Scholarship. The author is grateful for their financial support.

\section{Preliminaries} \label{prelimsec}

\subsection{Notations and conventions}
Throughout $\Bbbk$ will denote an algebraically closed field of characteristic 0. Let $R$ be an $\NN$-graded ring. We write $R\leftGr$ (respectively, $\Gr R$) for the category of $\ZZ$-graded left (respectively, right) $R$-modules with graded (i.e.\ degree-preserving) morphisms, and $R \leftgr$ (respectively, $\gr R$) for the full subcategory of finitely generated objects. We will usually work with left modules, and most definitions will be stated using this convention. Given $M \in R \leftGr$, we define $M[i]$ to be the graded module which is isomorphic to $M$ as an ungraded module, but which satisfies $M[i]_n = M_{i+n}$. \\
\indent If $M,N \in R \leftGr$, then we write $\Hom_{R \leftGr}(M,N)$ for the space of graded morphisms, and use analogous notation when $M$ and $N$ are finitely generated. If $M$ and $N$ are finitely generated, we can make an identification $\Hom_R(M,N) = \bigoplus_{i \in \ZZ} \Hom_{R \leftgr}(M,N[i])$, which gives $\Hom_R(M,N)$ a natural grading. Elements of the graded vector space $\Hom_R(M,N)$ will be referred to as homomorphisms. From this, $\Ext^i_R(M,N)$ inherits a natural grading for all $i \geqslant 0$. We write $M^* \coloneqq \Hom_\Bbbk(M,\Bbbk)$. If $M$ and $N$ are $(R,R)$-bimodules, then we write $\Hom({}_R M, {}_R N)$ (respectively, $\Hom(M_R,N_R)$) for the space of left (respectively, right) $R$-module homomorphisms between $M$ and $N$; we may also use this notation in other cases where Hom spaces may be ambiguous. Morphisms will be composed right-to-left, unless otherwise stated. \\
\indent We write $\idim M$ for the injective dimension of a module, and $\gldim R$ for the global dimension of a ring, tacitly assuming that the left and right global dimensions coincide. Unadorned tensors of objects will be over the field $\Bbbk$, i.e.\ $\otimes = \otimes_\Bbbk$. In general, tensors of elements will be unadorned.

\subsection{AS regular algebras and $m$-Koszul algebras}
Suppose that $A$ is a $\Bbbk$-algebra. We say that $A$ is \emph{connected graded} if it is $\NN$-graded with $A_0 = \Bbbk$. We say that $M \in A \leftGr$ is \emph{locally finite} if $\dim_\Bbbk M_i < \infty$ for all $i \in \mathbb{Z}$. \\
\indent If $A$ is a connected graded $\Bbbk$-algebra which is locally finite, then we may define the \emph{Gelfand-Kirillov (GK) dimension} of $M \in A \leftgr$ (which also allows $M = {}_A A$) as
\begin{align*}
\GKdim M \coloneqq \limsup_{n \to \infty} \log_n(\dim_\Bbbk M_n).
\end{align*}
The GK dimension can be defined in more general settings but, under our assumptions, the above definition is equivalent to the usual one by \cite[Proposition 6.6]{lenagan}. The GK dimension serves as a sensible dimension function for noncommutative rings; for example, if $A$ is commutative, then it agrees with the Krull dimension of $A$. \\
\indent We now define the algebras which will serve as noncommutative analogues of commutative polynomial rings:

\begin{defn} \label{ASregdef}
Let $A$ be a connected graded $\Bbbk$-algebra, and also write $\Bbbk = A/A_{\geqslant 1}$ for the trivial module. We say that $A$ is \emph{Artin--Schelter Gorenstein} (or AS Gorenstein) \emph{of dimension $d$} if:
\begin{enumerate}[{\normalfont (1)},topsep=1pt,itemsep=1pt,leftmargin=35pt]
\item $\idim {}_A A = \idim A_A = d < \infty$, and 
\item $\Ext^i(_{A} \Bbbk,{}_A A) \cong \left \{
\begin{array}{cl}
0 & \text{if } i \neq d \\
\Bbbk[\ell]_A & \text{if } i = d
\end{array}
\right . $ \hspace{2pt} as graded right $A$-modules, for some integer $\ell$, and a symmetric condition holds for $\Ext^i(\Bbbk_{A},A_A)$, with the same integer $\ell$. We call $\ell$ the \emph{Gorenstein parameter} of $A$.
\end{enumerate}
If, moreover
\begin{enumerate}[{\normalfont (1)},topsep=1pt,itemsep=1pt,leftmargin=35pt]
\item[(3)] $\gldim A = d$, and
\item[(4)] $A$ has finite GK dimension,
\end{enumerate}
then we say that $A$ is \emph{Artin--Schelter regular} (or AS regular) \emph{of dimension $d$}.
\end{defn}

Unless otherwise stated, if $A$ is an AS regular algebra then we will assume that it is generated in degree 1. AS regular algebras are often thought of as noncommutative analogues of polynomial rings, and they have good ring-theoretic properties; in particular, all known examples are noetherian domains, and it is conjectured that this is always the case. In particular, if $A$ is a commutative AS regular algebra, then it is a polynomial ring. In all known examples the GK dimension and global dimension of an AS regular algebra coincide. \\
\indent When $A$ is AS regular of dimension $2$ and generated in degree $1$ then, up to isomorphism, it is one of the following algebras:
\begin{align*}
\Bbbk_q[u,v] \coloneqq \frac{\Bbbk \langle u,v \rangle}{\langle vu - quv \rangle}\quad (q \in \Bbbk^\times), \qquad \Bbbk_J[u,v] \coloneqq \frac{\Bbbk \langle u,v \rangle}{\langle vu - uv - u^2 \rangle}.
\end{align*}
These are called the \emph{quantum plane} and \emph{Jordan plane}, respectively. We will also work with the algebras
\begin{align*}
\frac{\Bbbk \langle u,v \rangle}{\langle v^2-u^2 \rangle}, \qquad \frac{\Bbbk \langle u,v \rangle}{\langle v^2+u^2 \rangle},
\end{align*}
which are both isomorphic to $\Bbbk_{-1}[u,v]$. \\
\indent It is straightforward to show that we can always write an AS regular algebra $A$ in the form
\begin{align}
A = \frac{T_\Bbbk(V)}{\langle \mathcal{R} \rangle}, \label{mhomogeneous}
\end{align}
where $V = A_1$ is a finite-dimensional vector space, and $\mathcal{R}$ is a set of relations in $V^{\otimes m}$ for some $m$. An algebra with a presentation of the form (\ref{mhomogeneous}) is said to be \emph{$m$-homogeneous}. \\
\indent Since $V$ is finite-dimensional, we have an identification
\begin{align}
\varphi : (V^*)^{\otimes k} \to (V^{\otimes k})^*, \quad \varphi(f_1 \otimes \dots \otimes f_k)(v_1 \otimes \dots \otimes v_k) = f_k(v_1) f_{k-1}(v_2) \dots f_1 (v_k). \label{tensordual}
\end{align}
We will usually suppress the map $\varphi$. We remark that there are competing conventions (see, for example, \cite[Footnote 3]{mkoszul}) when identifying $(V^*)^{\otimes k}$ with $(V^{\otimes k})^*$. However, for our purposes the above identification is most natural, since if $V$ is a left $H$-module for some Hopf algebra $H$, it ensures that the evaluation map $(V^*)^{\otimes k} \otimes V^{\otimes k} \to \Bbbk$ is a morphism of $H$-modules. With this identification, we set 
\begin{align*}
\mathcal{R}^\perp = \{ f \in (V^*)^{\otimes k} \mid f(\mathcal{R}) = 0 \},
\end{align*}
and then define the $m$\emph{-homogeneous dual} of $A$ to be
\begin{align*}
A^! \coloneqq \frac{T_\Bbbk(V^*)}{\langle \mathcal{R}^\perp \rangle}.
\end{align*}
This algebra is also connected graded. \\
\indent Now fix a basis $\{ v_1, \dots, v_r \}$ of $V=A_1$, and let $\{ \phi_1, \dots, \phi_r \}$ be the corresponding dual basis of $V^* = A_1^!$. Define $e = \sum_{i=1}^n v_i \otimes \phi_i$, which is independent of our choice of basis. Let $P_j = A \otimes (A_j^!)^*$, which is a free left $A$-module. Then ``right multiplication by $e$'' gives a map
\begin{align*}
\mbox{} \cdot e : P_j \to P_{j-1}, \qquad a \otimes f \mapsto \sum_{i=1}^n a v_i \otimes f \phi_i, \quad \text{where } f \phi_i : A^!_{j-1} \to \Bbbk, \hspace{5pt} f \phi_i(x) = f(\phi_i x).
\end{align*}
Since $A$ is $m$-homogeneous, $(\cdot e)^m : P_j \to P_{j-m}$ is the zero map. Therefore, there is a complex of left $A$-modules
\begin{align*}
P^\bullet : \quad \dots \xrightarrow{(\cdot e)^{m-1}} P_{2m+1} \xrightarrow{\hspace{7pt} \cdot e \hspace{7pt}} P_{2m} \xrightarrow{(\cdot e)^{m-1}} P_{m+1} \xrightarrow{\hspace{7pt} \cdot e \hspace{7pt}} P_{m} \xrightarrow{(\cdot e)^{m-1}} P_{1} \xrightarrow{\hspace{7pt} \cdot e \hspace{7pt}} P_{0} \to \Bbbk \to 0,
\end{align*}
called the $m$-\emph{Koszul complex} for $A$. It will be convenient to define a map $\rho$ as follows:
\begin{align*}
\rho: \mathbb{N}_0 \to \mathbb{N}_0, \quad \rho(i) = \left \{
\begin{array}{cc}
    \tfrac{mi}{2} & \text{if } i \text{ is even}, \\[3pt]
    \tfrac{m(i-1)}{2} + 1 & \text{if } i \text{ is odd.}
\end{array}
\right.
\end{align*}
More explicitly, $\rho$ maps the sequence of integers $0,1,2,3,4,5, \dots$ to $0,1,m,m+1,2m,2m+1, \dots $. In particular, the $i$th projective module appearing in the Koszul complex $P^\bullet$ is $P_{\rho(i)}$.

\begin{defn}[\cite{berger1}]
If $P^\bullet$ is exact, then we call $A$ an $m$\emph{-Koszul algebra}. If this is the case, then $P^\bullet$ is a minimal projective resolution of ${}_A \Bbbk$.
\end{defn}

When $m=2$, this recovers the usual notion of a Koszul algebra. We remark that, in \cite{berger1}, the author defines an $m$\emph{-Koszul algebra} to be an $m$-homogeneous algebra for which $\Tor_i^A(\Bbbk,\Bbbk)$ is concentrated in a single degree for $i \geqslant 3$, and then showed that this is equivalent to the above definition in \cite[Theorem 2.11]{berger1}. \\
\indent If we further assume that $A$ is AS regular, then we can write down $P^\bullet$ more precisely. Suppose that $A$ has global dimension $d$ and Gorenstein parameter $\ell$. In this case, we have $\rho(d) = \ell$, and the left-hand part of the $m$-Koszul complex has the following form \cite[Proposition 2.9]{mkoszul}:
\begin{align*}
P^\bullet : \quad 0 \to P_\ell \xrightarrow{\hspace{7pt} \cdot e \hspace{7pt}} P_{\ell-1} \xrightarrow{(\cdot e)^{m-1}} \dots
\end{align*}

\subsection{Superpotentials and derivation-quotient algebras}
By a result of Dubois-Violette, every $m$-Koszul AS regular algebra is a \emph{derivation-quotient algebra}. We recall some definitions before stating this result precisely. \\
\indent Throughout this subsection, let $V$ be a finite-dimensional $\Bbbk$-vector space, with basis $\{v_1, \dots, v_r \}$. For any positive integer $\ell$, there is a linear map 
\begin{align*}
\theta: V^{\otimes \ell} \to V^{\otimes \ell}, \qquad \theta( u_1 \otimes u_2 \otimes \dots \otimes u_{\ell-1} \otimes u_{\ell}) = u_2 \otimes \dots \otimes u_{\ell} \otimes u_1.
\end{align*}

\begin{defn} \label{superpotentialdefn1}
Let $\sfw \in V^{\otimes \ell}$. 
\begin{enumerate}[{\normalfont (1)},leftmargin=*,topsep=0pt,itemsep=0pt]
\item We say that $\sfw$ is a \emph{superpotential} if $\theta(\sfw) = \sfw$.
\item Let $\sigma \in \GL(V)$. We say that $\sfw$ is a $\sigma$\emph{-twisted superpotential} if $(\id^{\otimes (\ell-1)} \otimes \sigma) \theta(\sfw) = \sfw$. Moreover, we say that $\sfw$ is a \emph{twisted superpotential} if it is a $\sigma$-twisted superpotential for some $\sigma \in \GL(V)$.
\end{enumerate}
\end{defn}

\begin{rem}
Our terminology differs from both \cite{bsw} and \cite{mkoszul}; the former includes a coefficient of $(-1)^{\ell+1}$ in the definition of a (twisted) superpotential, while \cite{mkoszul} would call the above a $\sigma^{-1}$-twisted superpotential.
\end{rem}

\begin{defn} \label{derquodef1}
Let $\sfw \in V^{\otimes \ell}$. We define
\begin{align*}
\partial^i \sfw \coloneqq \big\{ \psi_1 \otimes \psi_2 \otimes \dots \psi_i \otimes \id^{\ell-i}(\sfw) \hspace{3pt} \big| \hspace{3pt} \psi_1, \dots, \psi_i \in V^* \big \}.
\end{align*}
We then define the \emph{derivation-quotient algebra of $\sfw$ of order $i$} to be
\begin{align*}
\scrD(\sfw,i) \coloneqq \frac{T_{\Bbbk}(V)}{\langle \partial^i \sfw \rangle}.
\end{align*}
\end{defn}

\indent In practice, the relations in $\scrD(\sfw,i)$ are obtained from $\sfw$ by \emph{formal differentiation} on the left with respect to all length $i$ expressions in the $v_j$. This terminology appears to be standard, but a more accurate term might be ``formal left deletion''. \\
\indent The following result establishes the connection between $m$-Koszul AS regular algebras and derivation-quotient algebras:

\begin{thm}[{\cite[Theorem 11]{dubois}}]
Suppose that $A = T_\Bbbk(V)/\langle \mathcal{R} \rangle$ is an $m$-Koszul AS regular algebras of Gorenstein parameter $\ell$. Then there exists a superpotential $\sfw \in V^{\otimes \ell}$ such that
\begin{align*}
A \cong \scrD(\sfw, \ell-m).
\end{align*}
\end{thm}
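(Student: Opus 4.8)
The plan is to follow the classical route: extract $\sfw$ from the top of the minimal free resolution of ${}_A\Bbbk$, and use Artin--Schelter duality twice --- once to produce $\sfw$ and identify the relations of $A$ with $\partial^{\ell-m}\sfw$, and once to establish the (twisted) cyclic symmetry of $\sfw$. First I would set up the Koszul bookkeeping. Since $A$ is $m$-Koszul, the complex $P^\bullet$ displayed above is the minimal projective resolution of ${}_A\Bbbk$, and its $k$th term is $P_{\rho(k)} = A \otimes (A^!_{\rho(k)})^*$. Dualising the perfect pairing between $A^!_j$ and its defining relations identifies $(A^!_j)^*$ with the subspace $\mathcal{R}_j \coloneqq \bigcap_{a+b = j-m} V^{\otimes a} \otimes \mathcal{R} \otimes V^{\otimes b}$ of $V^{\otimes j}$, so that $\mathcal{R}_0 = \Bbbk$, $\mathcal{R}_1 = V$, $\mathcal{R}_m = \mathcal{R}$, and the top term of the resolution is $P_\ell = A \otimes \mathcal{R}_\ell$ with $\mathcal{R}_\ell \subseteq V^{\otimes \ell}$.

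Next I would apply $\Hom_A(-,{}_AA)$ to $P^\bullet$. Since $\gldim A = d$ and $A$ is AS Gorenstein of dimension $d$ with parameter $\ell$, the resulting complex of graded free right $A$-modules has cohomology only at the outer end, equal to $\Bbbk[\ell]_A$; as it is assembled from the transposed (hence still minimal) differentials, reversing it gives the minimal free resolution of $\Bbbk[\ell]_A$, which by uniqueness is the analogous $m$-Koszul complex of $\Bbbk_A$ shifted by $[\ell]$. Matching the two resolutions term by term yields vector-space isomorphisms $\mathcal{R}_{\rho(k)} \cong (\mathcal{R}_{\rho(d-k)})^*$, and taking $k = d$ gives $\mathcal{R}_\ell \cong (\mathcal{R}_0)^* = \Bbbk$. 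Hence $\mathcal{R}_\ell$ is one-dimensional; I fix a nonzero $\sfw \in \mathcal{R}_\ell \subseteq V^{\otimes \ell}$. (Equivalently, this step records the standard fact that AS regularity makes $A^!$ a graded Frobenius algebra with one-dimensional socle concentrated in degree $\ell$.)

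Now I would prove $A \cong \scrD(\sfw, \ell-m)$. Taking $a = \ell-m$ and $b = 0$ in the intersection defining $\mathcal{R}_\ell$ shows $\sfw \in V^{\otimes(\ell-m)} \otimes \mathcal{R}$, so contracting the first $\ell-m$ tensor legs of $\sfw$ against any $\psi_1,\dots,\psi_{\ell-m} \in V^*$ lands in $\mathcal{R}$; therefore $\partial^{\ell-m}\sfw \subseteq \mathcal{R}$, and there is a surjection of connected graded algebras $\scrD(\sfw,\ell-m) \twoheadrightarrow A$. As both algebras are presented by relations inside $V^{\otimes m}$, it suffices to show $\dim_\Bbbk \partial^{\ell-m}\sfw = \dim_\Bbbk \mathcal{R}$, i.e.\ that the contraction map $(V^{\otimes(\ell-m)})^* \to V^{\otimes m}$, $\psi \mapsto (\psi \otimes \id^{\otimes m})(\sfw)$, is onto $\mathcal{R}$. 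This is precisely the non-degeneracy of the multiplication pairing $A^!_m \otimes A^!_{\ell-m} \to A^!_\ell \cong \Bbbk$, which holds because $A^!$ is Frobenius; unwinding the identifications $A^!_m \cong \mathcal{R}^*$ and $A^!_{\ell-m} \cong (\mathcal{R}_{\ell-m})^*$ and keeping careful track of the convention fixed by $\varphi$ in \eqref{tensordual} turns that pairing into the contraction above. Reconciling these several dualities with the chosen convention for $\varphi$ is the technical crux, and the step where I expect the main difficulty; everything else is essentially formal.

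Finally, the cyclic symmetry of $\sfw$ falls out of the same self-duality. The comparison isomorphism between $P^\bullet$ and its $\Hom_A(-,{}_AA)$-dual, restricted to the two rank-one ends of the resolutions, identifies the final differential $P_\ell \to P_{\ell-1}$ --- which is ``$\cdot\, e$'' and is built from $\sfw$ --- with the transpose of the first differential, which is built from the cyclic rotation $\theta(\sfw)$; because $A$ is only AS Gorenstein and not symmetric, these agree only after twisting by the Nakayama automorphism $\nu$ of $A$, giving $(\id^{\otimes(\ell-1)} \otimes \sigma)\theta(\sfw) = \sfw$ for $\sigma$ a sign times $\nu^{\pm 1}$, once $\sfw$ is rescaled to clear a scalar. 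This is exactly the statement that $\sfw$ is a twisted superpotential in the sense of Definition \ref{superpotentialdefn1}, with twist governed by the homological/Nakayama data of $A$; when $\nu$ is trivial and $\ell$ is odd one recovers an untwisted superpotential. Assembling the three parts yields $A \cong \scrD(\sfw,\ell-m)$, as required.
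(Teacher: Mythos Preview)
The paper does not give a proof of this statement; it is quoted verbatim from \cite{dubois} and used as a black box. So there is nothing in the paper to compare your argument against beyond the surrounding setup (the identification of $(A^!_j)^*$ with the intersection $\bigcap V^{\otimes a}\otimes\mathcal{R}\otimes V^{\otimes b}$, and the one-dimensionality of $\mathsf{W}=\mathcal{R}_\ell$, both of which the paper also records just before Theorem~\ref{hdetthm} with a pointer to \cite{mkoszul}).

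Your outline is the standard proof and is essentially correct: extracting $\sfw$ as a generator of the one-dimensional top term of the Koszul resolution, reading off $\partial^{\ell-m}\sfw\subseteq\mathcal{R}$ from $\sfw\in V^{\otimes(\ell-m)}\otimes\mathcal{R}$, and upgrading the inclusion to an equality via the Frobenius pairing on $A^!$ is exactly how \cite{dubois} (and \cite{bsw}, \cite{mkoszul}) proceed. Two small comments. First, note that the theorem as stated in the paper says ``superpotential'' but, as you correctly observe (and as the paper itself tacitly acknowledges in Hypothesis~\ref{mainhypothesis} and Example~\ref{downupisderquot}), in general one only obtains a \emph{twisted} superpotential; your identification of the twist with the Nakayama automorphism is the right one. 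Second, your final paragraph is the step that carries the real content and is also the one you have left most schematic: the passage from ``the dual of the Koszul resolution is again the Koszul resolution, up to twist'' to the pointwise identity $(\id^{\otimes(\ell-1)}\otimes\sigma)\theta(\sfw)=\sfw$ requires tracking the comparison isomorphism explicitly through the identifications $(A^!_j)^*\cong\mathcal{R}_j$ and the convention \eqref{tensordual}. This is done carefully in \cite[\S3]{mkoszul} and \cite[Theorem~11]{dubois}; if you want a self-contained write-up you should spell out that chase, since getting the side on which $\sigma$ acts and the power of $\sigma$ correct is exactly where different conventions in the literature diverge.
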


\begin{example} \label{downupisderquot}
Consider the noetherian graded down-up algebras of Benkart-Roby, \cite{benkart}:
\begin{align*}
A(\alpha, \beta) = \frac{\Bbbk \langle u,v \rangle}{\left\langle 
\begin{array}{c}
v^2u = \alpha vuv + \beta u v^2 \\
vu^2 = \alpha uvu + \beta u^2 v
\end{array}\right \rangle}, \qquad \text{where } \alpha \in \Bbbk, \gap \beta \in \Bbbk^\times.
\end{align*}
By \cite[4.4]{kirkmandu}, these algebras are AS regular of global dimension 3, have Gorenstein parameter 4, and are 3-Koszul. Therefore, if we set $V = \sspan \{u,v\}$, then there exists a (twisted) superpotential $\sfw \in V^{\otimes 4}$ such that $A(\alpha, \beta) = \scrD(\sfw,1)$. Indeed, if we set (where here we omit the tensors)
\begin{gather*}
\sfw = uv^2u - \alpha uvuv - \beta u^2v^2 - \beta^{-1} v^2u^2 + \alpha \beta^{-1} vuvu + v u^2 v, \\
\sigma: V \to V, \qquad u \mapsto -\beta^{-1} u, \quad v \mapsto -\beta v
\end{gather*}
then it is straightforward to check that $\sfw$ is a $\sigma$-twisted superpotential and that $A(\alpha,\beta) = \scrD(\sfw,1)$. 
\end{example}

In \cite{bsw}, the authors generalised the above constructions to the setting of bimodules over a semisimple algebra $H$. We now recall some material from \cite[Section 2]{bsw}. \\
\indent Throughout this subsection, let $H$ be a finite-dimensional semisimple $\Bbbk$-algebra and let $M$ be an $(H,H)$-bimodule. We will be concerned with three different possible duals of $M$:
\begin{itemize}[leftmargin=25pt,topsep=0pt,itemsep=0pt]
\item $M^* \coloneqq \Hom_\Bbbk(M,\Bbbk)$, the space of $\Bbbk$-linear morphisms from $M$ to $\Bbbk$, which is an $(H,H)$-bimodule via $(h \gap\psi\gap k)(m) = \psi(kmh)$;
\item $M^{*R} \coloneqq \Hom(M_H, H_H)$, the space of right $H$-module morphisms from $M$ to $H$, which is an $(H,H)$-bimodule via $(h \gap\psi\gap k)(m) = h \gap\psi (km)$; and
\item $M^{*L} \coloneqq \Hom({}_H M, {}_H H)$, the space of left $H$-module morphisms from $M$ to $H$, which is an $(H,H)$-bimodule via $(h \gap\psi\gap k)(m) = \psi(mh)k$.
\end{itemize}
These duals give rise to three contravariant functors, $(-)^*$, $(-)^{*R}$ and $(-)^{*L}$, from the category of $(H,H)$-bimodules to itself. These functors are not canonically isomorphic, but can be identified by specifying a \emph{trace function} $\Tr : H \to \Bbbk$ which is nondegenerate in the sense that the associated form $H \otimes H \to \Bbbk$ defined by $h \otimes k \mapsto \Tr(hk)$ is bilinear and nondegenerate. Using this, one can define natural isomorphisms $R$ and $L$ from the $\Bbbk$-dual to the other two duals by requiring
\begin{align}
\Tr((R\psi)(m)) = \psi(m) = \Tr((L\psi)(m)) \label{tracefunction}
\end{align}
for all $\psi \in M^*$ and all $m \in M$. Henceforth we work with a fixed trace function $\Tr$. We then obtain $H$-bimodule morphisms 
\begin{gather*}
\llbracket -,- \rrbracket : M^* \otimes_H M \to H, \qquad {} \llbracket\psi,m\rrbracket = (R \psi)(m), \\
\llbracket-,-\rrbracket : M \otimes_H M^* \to H, \qquad \llbracket m,\psi\rrbracket = (L \psi)(m).
\end{gather*}
(The same notation is used for both pairings, but which we are using will be clear from context.)  For $i \leqslant j$, the first of these maps can be extended as follows:
\begin{gather*}
\llbracket-,-\rrbracket : (M^*)^{\otimes_H i} \otimes_H M^{\otimes_H j} \to M^{\otimes_H (j-i)}, \\
\llbracket\psi_1 \otimes \dots \otimes \psi_i, m_1 \otimes \dots m_j \rrbracket = \llbracket\psi_1, \llbracket\psi_2,  \dots, \llbracket\psi_{i-1} ,\llbracket\psi_i,m_1\rrbracket m_2\rrbracket \dots m_{i-1}\rrbracket m_i\rrbracket \hspace{3pt} m_{i+1} \otimes \dots \otimes m_j.
\end{gather*}
We can extend the second map in a similar fashion. In both cases, if $i = j$ then the codomain is $H$. \\
\indent Let $\nu$ be a graded $\Bbbk$-algebra automorphism of the tensor algebra $T_H(M)$ (so in particular $\nu$ restricts to automorphisms of $H$ and $M$) such that the trace is invariant under $\nu$. This gives rise to an automorphism $\nu^*$ of $M^*$ by pulling back: $\nu^*(\psi) = \psi \circ \nu$. Later, it will be convenient to write $\nu^*(\psi) = \psi^{\nu^*}$, so henceforth we use this notation. \\
\indent We are now able to define (twisted) superpotentials in this new setting.

\begin{defn} \label{superpotentialdefn2}
Let $\sfw \in M^{\otimes_H \ell}$.
\begin{enumerate}[{\normalfont (1)},leftmargin=*,topsep=0pt,itemsep=0pt]
\item We say that $\sfw$ is a \emph{weak potential} if it commutes with the action of $H$:
\begin{align*}
h \cdot \sfw = \sfw \cdot h \qquad \text{for all } h \in H.
\end{align*}
A weak potential is called a \emph{superpotential} if 
\begin{align*}
\llbracket\psi, \sfw\rrbracket = \llbracket\sfw, \psi\rrbracket \qquad \text{for all } \psi \in M^*.
\end{align*}
\item We say that $\sfw$ is a \emph{($\nu$-)twisted weak potential} if
\begin{align*}
h \cdot \sfw = \sfw \cdot \nu(h) \qquad \text{for all } h \in H.
\end{align*}
A twisted weak potential is called a \emph{($\nu$-)twisted superpotential} if
\begin{align*}
\llbracket\psi^{\nu^*}, \sfw \rrbracket = \llbracket\sfw, \psi\rrbracket \qquad \text{for all } \psi \in M^*.
\end{align*}
\end{enumerate}
\end{defn}

If $\sfw \in M^{\otimes_H \ell}$ is a $\nu$-twisted weak potential, then for every non-negative integer $i \leqslant \ell$ there is a bimodule morphism
\begin{align*}
\partial_\sfw^i : (M^*)^{\otimes_H i} \otimes_H {}_\nu H \to M^{\otimes_H (\ell-i)}, \quad (\psi_1 \otimes \dots \otimes \psi_i) \otimes h =  \llbracket\psi_1 \otimes \dots \otimes \psi_i, \sfw h\rrbracket,
\end{align*}
where ${}_\nu H$ denotes the $(H,H)$-bimodule with right action given by multiplication, and left action given by $h \cdot k \coloneqq \nu(h) k$.

\begin{defn} \label{derquodef2}
Let $\sfw \in M^{\otimes_H \ell}$ be a twisted weak potential. The \emph{derivation-quotient algebra of $\sfw$ of order $i$} is
\begin{align*}
\scrD(\sfw,i) \coloneqq \frac{T_H(M)}{\langle \im \partial_\sfw^i \rangle}.
\end{align*}
\end{defn}

In the case when $H = \Bbbk$, Definitions \ref{superpotentialdefn2} and \ref{derquodef2} are equivalent to Definitions \ref{superpotentialdefn1} and \ref{derquodef1}, respectively. \\
\indent Much of the above theory will be applied to path algebras of quivers, so we recall some definitions. A \emph{quiver} is a directed multigraph, and we will always assume that our quivers are \emph{finite}, in the sense that they have finitely many vertices and edges. We will usually assume that our quivers have vertex set $\{0, 1, \dots, n\}$. We can equip $Q$ with \emph{head} and \emph{tail} maps, which map an arrow $\alpha : i \to j$ to the vertex $j$ and the vertex $i$, respectively. A \emph{path (of length $\ell$} in $Q$ is a sequence of arrows $p = \alpha_1 \dots \alpha_\ell$ such that $h(\alpha_i) = t(\alpha_{i+1})$ for $1 \leqslant i < \ell$ (in particular, we compose paths from left to right). We can extend the head and tail maps to paths in the obvious way. \\
\indent Given a finite quiver, we can form a $\Bbbk$-algebra $\Bbbk Q$ called the \emph{path algebra} of $Q$ as follows. As a vector space, $\Bbbk Q$ has a basis consisting of paths in the quiver (including the \emph{stationary paths} $e_i$ where we simply remain at vertex $i$), and multiplication of paths is given by concatenation, where defined: 
\begin{align*}
p \cdot q \coloneqq \left \{
\begin{array}{cl}
pq & \text{if } h(p) = t(q), \\
0 & \text{otherwise},
\end{array}
\right.
\end{align*}
and then extended linearly to all of $\Bbbk Q$. The elements $e_i$ are pairwise orthogonal idempotents, and the unit element in $\Bbbk Q$ is $1 = e_0 + e_1 + \dots + e_n$. The path algebra has a natural grading given by path length. \\
\indent If $\Bbbk Q$ is a path algebra, a \emph{relation} $\rho$ in $\Bbbk Q$ is an element of $(\Bbbk Q)_m$ for some $m \geqslant 2$, where every path in $\rho$ has the same head and tail, i.e.\ $\rho \in e_i (\Bbbk Q)_m e_j$ for some vertices $i$ and $j$. If $I$ is a two-sided ideal of $\Bbbk Q$ generated by relations, then we call $\Bbbk Q/I$ a \emph{path algebra with relations} or a \emph{quiver with relations}. \\
\indent A particularly important family of quivers with relations are preprojective algebras:

    

\begin{defn} \label{preprojdef}
Let $Q$ be a quiver without loops, and define the \emph{double} of $\overline{Q}$ by adding an arrow $\overline{\alpha} : j \to i$ if there is an arrow $\alpha: i \to j$ in $Q$. The \emph{preprojective algebra} of $Q$ is then the quiver with relations
\begin{align*}
\Pi(Q) \coloneqq \Bbbk \overline{Q} / \Big \langle \sum_{\alpha \in Q} \alpha \overline{\alpha} - \overline{\alpha} \alpha \Big \rangle.
\end{align*}
Observe that, by pre- and post-multiplying the defining relation by $e_i$, for each vertex $i$ there is a relation
\begin{align*}
\sum_{\alpha : t(\alpha)=i} \alpha \overline{\alpha} \hspace{5pt} - \hspace{-3pt} \sum_{\alpha : h(\alpha)=i} \overline{\alpha} \alpha.
\end{align*}
\end{defn}

We remark that if $\Gamma$ is a graph and $Q$ and $Q'$ are quivers obtained from $\Gamma$ by assigning some orientation to the arrows, then $\Pi(Q) \cong \Pi(Q')$. \\
\indent Of particular importance are the \emph{extended Dynkin graphs}, which are shown in Figure \ref{dynkindiagrams}. These consist of two infinite families, $\widetilde{\mathbb{A}}_n$ (for $n \geqslant 1$) and $\widetilde{\mathbb{D}}_n$ (for $n \geqslant 4$), and three exceptional examples, $\widetilde{\mathbb{E}}_6$, $\widetilde{\mathbb{E}}_7$, and $\widetilde{\mathbb{E}}_8$. By removing the starred vertex, we obtain a \emph{Dynkin graph}; for example, removing the starred vertex from an $\widetilde{\mathbb{A}}_n$ extended Dynkin graph yields an $\mathbb{A}_n$ Dynkin graph, and similarly for the other cases. Given an (extended) Dynkin graph, we can turn it into a quiver $Q$ by assigning some orientation to the edges, and then we can form the preprojective algebra $\Pi(Q)$ as above. \\
\indent Figure \ref{dynkindiagrams} also includes two other graphs, namely $\widetilde{\mathbb{L}}_n$ (for $n \geqslant 1$) and $\widetilde{\mathbb{DL}}_n$ (for $n \geqslant 2$). As before, we can remove the starred vertex to obtain a new graph, which is called an $\mathbb{L}_n$ graph. These graphs can be turned into quivers $Q$ by assigning some orientation to the edges. With some care, one can then define the preprojective algebra $\Pi(Q)$ of $Q$; however, Definition \ref{preprojdef} requires $Q$ to not have loops. For details on how to define the preprojective algebra in these cases, see \cite{malkin,simon}.

\begin{figure}[h]
\begin{tabular}{ p{6.2cm}  p{6.2cm} }
\begin{tikzpicture}[-,thick,scale=0.75]


\def \vwiggle {0.8415};
\node at (0,0.5+\vwiggle) {};
\node at (0,0.5-\vwiggle) {};

\def \wiggle {0.67};

\node at ({-1-\wiggle},0.5) {$\widetilde{\mathbb{A}}_n:$};
\node at ({6.5135-\wiggle},0.5) {\phantom{}};

\node[scale=0.8,circle] (1) at (0,0) {$\bullet$};
\node[scale=0.8,circle] (2) at (1,0) {$\bullet$};
\node[scale=0.8,circle] (3) at (2,0) {};
\node[scale=0.8,circle] (4) at (3,0) {};
\node[scale=0.8,circle] (5) at (4,0) {$\bullet$};
\node[scale=0.8,circle] (6) at (5,0) {$\bullet$};
\node[scale=0.8,circle] (0) at (2.5,1) {$\star$};

\draw (1) to (2);
\draw (2) to (3);
\draw (4) to (5);
\draw (5) to (6);
\draw (1.45) to (0);
\draw (0) to (6.135);

\draw[-,dash pattern={on 1pt off 2pt}] (3) to (4);

\end{tikzpicture}
&
\begin{tikzpicture}[-,thick,scale=0.75]

\def \wiggle {0.1};

\node at (5.8505-\wiggle,0) {\phantom{0}};
\node at (-1.707-\wiggle,0) {$\widetilde{\mathbb{D}}_n:$};

\node[scale=0.8,circle] (1) at (0,0) {$\bullet$};
\node[scale=0.8,circle] (2) at (1,0) {$\bullet$};
\node[scale=0.8,circle] (3) at (2,0) {};
\node[scale=0.8,circle] (4) at (3,0) {};
\node[scale=0.8,circle] (5) at (4,0) {$\bullet$};
\node[scale=0.8,circle] (6) at (5,0) {$\bullet$};

\node[scale=0.8,circle] (7) at (-0.707,-0.707) {$\star$};
\node[scale=0.8,circle] (8) at (-0.707,0.707) {$\bullet$};
\node[scale=0.8,circle] (9) at (5+0.707,-0.707) {$\bullet$};
\node[scale=0.8,circle] (10) at (5+0.707,0.707) {$\bullet$};

\draw (1) to (2);
\draw (2) to (3);
\draw (4) to (5);
\draw (5) to (6);
\draw (7) to (1);
\draw (8) to (1);
\draw (9) to (6);
\draw (10) to (6);

\draw[-,dash pattern={on 1pt off 2pt}] (3) to (4);

\end{tikzpicture}
\\
\begin{tikzpicture}[-,thick,scale=0.75]

\def \wiggle {1.21};

\node at (6.439-\wiggle,0) {\phantom{0}};

\node at (-1-\wiggle,1) {$\widetilde{\mathbb{E}}_6:$};

\node[scale=0.8,circle] (1) at (0,0) {$\star$};
\node[scale=0.8,circle] (2) at (1,0) {$\bullet$};
\node[scale=0.8,circle] (3) at (2,0) {$\bullet$};
\node[scale=0.8,circle] (4) at (3,0) {$\bullet$};
\node[scale=0.8,circle] (5) at (4,0) {$\bullet$};
\node[scale=0.8,circle] (6) at (2,1) {$\bullet$};
\node[scale=0.8,circle] (7) at (2,2) {$\bullet$};

\draw (1) to (2);
\draw (2) to (3);
\draw (3) to (4);
\draw (4) to (5);
\draw (6) to (3);
\draw (6) to (7);

\end{tikzpicture}
&
\begin{tikzpicture}[-,thick,scale=0.75]

\def \vwiggle {1.1349};
\node at (0,0.5+\vwiggle) {};
\node at (0,0.5-\vwiggle) {};

\def \wiggle {0.33};
\node at (6.5915-\wiggle,0) {\phantom{0}};

\node at (-1-\wiggle,0.5) {$\widetilde{\mathbb{E}}_7:$};

\node[scale=0.8,circle] (1) at (0,0) {$\star$};
\node[scale=0.8,circle] (2) at (1,0) {$\bullet$};
\node[scale=0.8,circle] (3) at (2,0) {$\bullet$};
\node[scale=0.8,circle] (4) at (3,0) {$\bullet$};
\node[scale=0.8,circle] (5) at (4,0) {$\bullet$};
\node[scale=0.8,circle] (6) at (5,0) {$\bullet$};
\node[scale=0.8,circle] (7) at (6,0) {$\bullet$};
\node[scale=0.8,circle] (8) at (3,1) {$\bullet$};

\draw (1) to (2);
\draw (2) to (3);
\draw (3) to (4);
\draw (4) to (5);
\draw (5) to (6);
\draw (6) to (7);
\draw (8) to (4);

\end{tikzpicture}
\\
\multicolumn{2}{c}
{
\begin{tikzpicture}[-,thick,scale=0.75]

\node at (-1,0.5) {$\widetilde{\mathbb{E}}_8:$};

\node[scale=0.8,circle] (1) at (0,0) {$\star$};
\node[scale=0.8,circle] (2) at (1,0) {$\bullet$};
\node[scale=0.8,circle] (3) at (2,0) {$\bullet$};
\node[scale=0.8,circle] (4) at (3,0) {$\bullet$};
\node[scale=0.8,circle] (5) at (4,0) {$\bullet$};
\node[scale=0.8,circle] (6) at (5,0) {$\bullet$};
\node[scale=0.8,circle] (7) at (6,0) {$\bullet$};
\node[scale=0.8,circle] (8) at (7,0) {$\bullet$};
\node[scale=0.8,circle] (9) at (5,1) {$\bullet$};

\draw (1) to (2);
\draw (2) to (3);
\draw (3) to (4);
\draw (4) to (5);
\draw (5) to (6);
\draw (6) to (7);
\draw (7) to (8);
\draw (6) to (9);

\end{tikzpicture}
}
\\[6pt]
\begin{tikzpicture}[-,thick,scale=0.75]

\def \vwiggle {0.84167};
\node at (0,\vwiggle) {};
\node at (0,-\vwiggle) {};

\node at (-1.707,0) {$\widetilde{\mathbb{L}}_n:$};

\node[scale=0.8,circle] (1) at (0,0) {$\star$};
\node[scale=0.8,circle] (2) at (1,0) {$\bullet$};
\node[scale=0.8,circle] (3) at (2,0) {};
\node[scale=0.8,circle] (4) at (3,0) {};
\node[scale=0.8,circle] (5) at (4,0) {$\bullet$};
\node[scale=0.8,circle] (6) at (5,0) {$\bullet$};

\draw (1) to (2);
\draw (2) to (3);
\draw (4) to (5);
\draw (5) to (6);

\node [circle](a) at (5,0) {$\phantom{\bullet}$};
\node [circle,minimum size=0.707cm](b) at ([{shift=(0:0.4)}]6){};
\coordinate  (c) at (intersection 2 of a and b);
\coordinate  (d) at (intersection 1 of a and b);
 \tikzAngleOfLine(b)(d){\AngleStart}
  \tikzAngleOfLine(b)(c){\AngleEnd}
 \draw%
   let \p1 = ($ (b) - (d) $), \n2 = {veclen(\x1,\y1)}
   in   
     
      (d) arc (\AngleStart-360:\AngleEnd:\n2); 

\node [circle](a) at (0,0) {$\phantom{\bullet}$};
\node [circle,minimum size=0.707cm](b) at ([{shift=(0:-0.4)}]1){};
\coordinate  (c) at (intersection 2 of a and b);
\coordinate  (d) at (intersection 1 of a and b);
 \tikzAngleOfLine(b)(d){\AngleStart}
  \tikzAngleOfLine(b)(c){\AngleEnd}
 \draw%
   let \p1 = ($ (b) - (d) $), \n2 = {veclen(\x1,\y1)}
   in   
     
      (d) arc (\AngleStart:\AngleEnd:\n2); 

\draw[-,dash pattern={on 1pt off 2pt}] (3) to (4);

\end{tikzpicture}
&
\begin{tikzpicture}[-,thick,scale=0.75]

\node at (-1.707,0) {$\widetilde{\mathbb{DL}}_n:$};

\node[scale=0.8,circle] (1) at (0,0) {$\bullet$};
\node[scale=0.8,circle] (2) at (1,0) {$\bullet$};
\node[scale=0.8,circle] (3) at (2,0) {};
\node[scale=0.8,circle] (4) at (3,0) {};
\node[scale=0.8,circle] (5) at (4,0) {$\bullet$};
\node[scale=0.8,circle] (6) at (5,0) {$\bullet$};

\node[scale=0.8,circle] (7) at (-0.707,-0.707) {$\star$};
\node[scale=0.8,circle] (8) at (-0.707,0.707) {$\bullet$};

\draw (1) to (2);
\draw (2) to (3);
\draw (4) to (5);
\draw (5) to (6);
\draw (7) to (1);
\draw (8) to (1);

\node [circle](a) at (5,0) {$\phantom{\bullet}$};
\node [circle,minimum size=0.707cm](b) at ([{shift=(0:0.4)}]6){};
\coordinate  (c) at (intersection 2 of a and b);
\coordinate  (d) at (intersection 1 of a and b);
 \tikzAngleOfLine(b)(d){\AngleStart}
  \tikzAngleOfLine(b)(c){\AngleEnd}
 \draw%
   let \p1 = ($ (b) - (d) $), \n2 = {veclen(\x1,\y1)}
   in   
     
      (d) arc (\AngleStart-360:\AngleEnd:\n2); 

\draw[-,dash pattern={on 1pt off 2pt}] (3) to (4);

\end{tikzpicture}

\end{tabular}
\caption{The extended Dynkin graphs, and two other Euclidean graphs. Each graph has $n$ black vertices, and a starred extending vertex.} \label{dynkindiagrams} 
\end{figure}
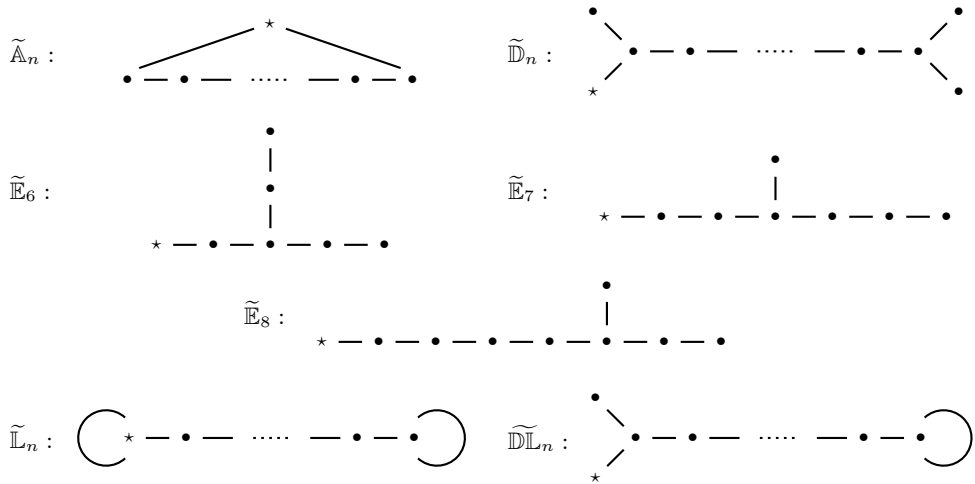

\subsection{Hopf algebras and noncommutative invariant theory}
We now discuss Hopf algebras and their actions on rings, as well as some aspects of noncommutative invariant theory. Given a Hopf algebra, we write $\Delta$ for the coproduct, $\varepsilon$ for the counit, and $S$ for the antipode. We write the coproduct in $H$ using sumless Sweedler notation so, for $h \in H$,
\begin{align*}
\Delta(h) = h_{(1)} \otimes h_{(2)}.
\end{align*}
We will chiefly be concerned with semisimple Hopf algebras, which are necessarily finite-dimensional and where the antipode satisfies $S^2 = \id_H$. In particular, this last condition implies (in fact, is equivalent to)
\begin{align*}
\varepsilon(h) 1_H = h_{(2)} S(h_{(1)}) =  S(h_{(2)}) h_{(1)}
\end{align*}
for all $h \in H$. We will require this fact at numerous points.

\begin{defn}
Let $H$ be a Hopf algebra and $A$ a $\Bbbk$-algebra. We say that \emph{$A$ is an $H$-module algebra} if $A$ is a left $H$-module which satisfies
\begin{align*}
h \cdot (ab) = (h_{(1)} \cdot a)(h_{(2)} \cdot b) \qquad \text{and} \qquad h \cdot 1_A = \varepsilon(h) 1_A
\end{align*}
for all $h \in H$ and $a,b \in A$. In this case, the \emph{invariant ring} of the action of $H$ on $A$ is
\begin{align*}
A^H = \{ a \in A \mid h \cdot a = \varepsilon(h) a \text{ for all } h \in H \}.
\end{align*}
When $A$ is an $H$-module algebra, we can form the \emph{smash product} $A \hash H$ which, as an abelian group, is $A \otimes H$, and where the multiplication is given by 
\begin{align*}
(a \hash h)(b \hash k) = a (h_{(1)} \cdot b) \hash h_{(2)} k,
\end{align*}
and extended linearly. When $H = \Bbbk G$ is a group algebra, we write $A \hash G$ instead of $A \hash \Bbbk G$. The algebra $A \hash G$ is called a \emph{skew group algebra}.
\end{defn}

\indent Using our current perspective, the classical setting can be described as follows: we are interested in actions of the Hopf algebra $H = \Bbbk G$, where $G \subseteq \Autgr(A)$, on the AS regular algebra $A = \Bbbk [x_1, \dots, x_n]$. We observe that, in this case, $H$ is finite-dimensional and semisimple. Moreover, since $G \subseteq \Autgr(A)$, the action of $H$ is degree-preserving and faithful. We will be concerned with actions of arbitrary Hopf algebras on AS regular algebras which satisfy these properties, although we replace the last condition with the following:

\begin{defn}
Let $V$ be a left $H$-module. We say that the action of $H$ on $V$ is \emph{inner-faithful} if $IV \neq 0$ for every nonzero Hopf ideal $I$ of $H$.
\end{defn}

This condition ensures that the action of $H$ does not factor through the action of one of its proper quotients. If $H = \Bbbk G$, then the action of $H$ on a module $V$ is inner-faithful if and only if it is faithful.

\begin{hypothesis} \label{mainhypothesis}
Let $A$ be an $m$-Koszul AS regular algebra of Gorenstein parameter $\ell$, of global dimension and GK dimension $d$, and which is generated in degree $1$. In particular, $A \cong \scrD(\sfw,\ell-m)$ for some twisted superpotential $\sfw$. Let $H$ be a semisimple (hence finite dimensional) Hopf algebra acting on $A$. Throughout, we assume the following hypotheses on the pair $(A,H)$:
\begin{enumerate}[{\normalfont (1)},topsep=1pt,itemsep=1pt,leftmargin=35pt]
\item $A$ is a left $H$-module algebra;
\item The action of $H$ on $A$ is degree-preserving, in the sense that each $A_i$ is an $H$-module; and
\item The action of $H$ on $A$ is inner-faithful.
\end{enumerate}
\end{hypothesis}

\indent In classical invariant theory, many properties of the invariant ring $\Bbbk[x_1, \dots, x_n]^G$ are controlled by the determinant of the elements in $G$. For example, a result of Watanabe says that, if every element of $G$ has determinant 1, then this invariant ring is Gorenstein. In \cite{jorgensen}, the authors defined the \emph{homological determinant} of the action of a finite group on an AS Gorenstein algebra $A$, and showed that it had a number of good properties. This construction was later generalised to actions of Hopf algebras on AS Gorenstein algebras in \cite{kkzGor}. We give a definition in the particular case where $A$ is AS regular.

\begin{defn}[{cf. \cite[Definition 3.3, Lemma 5.10]{kkzGor}}]
Suppose that $(A,H)$ satisfy Hypothesis \ref{mainhypothesis}. If $P^\bullet$ is a graded projective resolution of $\Bbbk$ viewed as a left $A \hash H$-module, then $P^\bullet$ may also be viewed as an $H$-equivariant graded projective resolution of $\Bbbk$ as a left $A$-module. Using this, $\Ext_A^d(\Bbbk,\Bbbk)$ becomes a left $H$-module. Since $A$ is AS regular, $\Ext_A^d(\Bbbk,\Bbbk)$ is one-dimensional, so there is an algebra homomorphism $\eta : H \to \Bbbk$ such that $h \cdot x = \eta(h) x$ for all $0 \neq x \in \Ext_A^d(\Bbbk,\Bbbk)$. The composite map $\eta \circ S : H \to \Bbbk$ is called the \emph{homological determinant} of the action of $H$ on $A$.
\end{defn}

We write $\hdet_A : H \to \Bbbk$ for the homological determinant map. We say that the homological determinant is \emph{trivial} if $\hdet_A = \varepsilon$, the counit of $H$. If $A$ is a $d$-dimensional polynomial ring and $H = \Bbbk G$ for a finite subgroup $G \leqslant \Autgr(A) = \GL(d,\Bbbk)$, then $\hdet_A(g) = \det(g)$ for all $g \in G$. By \cite[Theorem 3.3]{jorgensen} (for the group case) and \cite[Theorem 3.6]{kkzGor} (for the Hopf case), if the homological determinant of the $H$-action on an AS regular algebra is trivial, then $A^H$ is AS Gorenstein. 

\indent A map that is of particular importance in noncommutative invariant theory is the \emph{Auslander map}, which we now define:

\begin{defn} \label{ausmap}
Suppose that $A$ is a left $H$-module algebra. Then there is a natural map of graded $\Bbbk$-algebras
\begin{align*}
\gamma : A \hash H \to \End(A_{A^H}), \quad \gamma(a \hash h)(b) = a (h \cdot b),
\end{align*}
called the \emph{Auslander map}.
\end{defn}

In some situations the Auslander map is an isomorphism, and there is considerable interest in determining when this is the case. In the classical setting of a finite group acting on a polynomial ring, a complete answer is given in \cite{purity}, where it is shown that this map is an isomorphism if and only if $G$ contains no non-trivial reflections. In the noncommutative setting, it is known that if $A$ is AS regular of dimension 2 and the $H$-action on $A$ has trivial homological determinant, then the Auslander map is an isomorphism \cite{ckwzi}. For other recent progress, see \cite{ncauslanderthm, won, crawford19}, for example. \\
\indent The main tool that has been used to show that the Auslander map is an isomorphism is the following result:

\begin{thm}[{\cite[Theorem 0.3]{bhz}}] \label{introbhz}
Suppose that the pair $(A,H)$ satisfies Hypothesis \ref{mainhypothesis}, and that $A$ is GK-Cohen-Macaulay in the sense of \cite[Definition 1.4]{bhz}. Let $t \in H$ be a nonzero \emph{integral} in $H$, i.e.\ $t$ satisfies $h t = th = \varepsilon(h) t$ for all $h \in H$. Then the Auslander map is an isomorphism for the pair $(A,H)$ if and only if 
\begin{align*}
\GKdim \frac{A \hash H}{\langle 1 \hash t \rangle } \leqslant \GKdim A - 2.
\end{align*}
\end{thm}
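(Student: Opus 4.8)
The plan is to realise the Auslander map as the homomorphism canonically attached to an idempotent of $A\hash H$, and then to reduce the statement to a codimension-one computation over $A^H$. Since $H$ is semisimple, a nonzero integral $t$ satisfies $\varepsilon(t)\neq 0$, so after rescaling I may assume $\varepsilon(t)=1$. Then $e\coloneqq 1\hash t$ is an idempotent of $B\coloneqq A\hash H$, the two-sided ideal $\langle 1\hash t\rangle$ equals $I\coloneqq BeB$, and a short computation in Sweedler notation (using $ht=\varepsilon(h)t$) gives the familiar identifications: $eBe\cong A^H$ as algebras via $c\mapsto ce$, and $Be\cong A$ as $(B,A^H)$-bimodules, where $B$ acts on $A$ by $(a\hash h)\cdot b=a(h\cdot b)$ and $A^H$ acts by multiplication. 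Under the second identification $\End(A_{A^H})=\End_{A^H}(Be)$ and the Auslander map $\gamma$ becomes exactly ``left multiplication of $B$ on $Be$''; in particular one checks that $\ker\gamma=\{b\in B:bBe=0\}$ is the left annihilator of $I$ in $B$. Moreover, at any graded prime $\mathfrak{p}$ of $A^H$ with $(B/I)_\mathfrak{p}=0$ the idempotent $e$ is full in $B_\mathfrak{p}$, so $B_\mathfrak{p}$ is Morita equivalent to $(A^H)_\mathfrak{p}$ with progenerator $(Be)_\mathfrak{p}=A_\mathfrak{p}$, and hence $\gamma_\mathfrak{p}$ is an isomorphism. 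Consequently $\ker\gamma$ and $\coker\gamma$ vanish wherever $B/I$ does, so $\GKdim\ker\gamma$ and $\GKdim\coker\gamma$ are both at most $\GKdim(B/I)$.

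Next I would bring in the hypotheses. Semisimplicity of $H$ forces $A$ to be module-finite over $A^H$, so $A^H$ is connected graded Noetherian and $\GKdim A^H=\GKdim B=\GKdim A=d$. That $A$ is GK-Cohen--Macaulay, together with module-finiteness, makes $A$ (hence $B=A\otimes H$) a maximal Cohen--Macaulay $A^H$-module, and then $B$ and $\End_{A^H}(A)$ both satisfy Serre's condition $(S_2)$ as graded $A^H$-modules (a $\Hom$ into a maximal Cohen--Macaulay module has depth at least two). One may assume $d\geqslant 2$, since for $d\leqslant 1$ the asserted inequality reads $B/I=0$, i.e.\ $e$ full, and the equivalence is then immediate. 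Now I would invoke the standard fact that a graded homomorphism between finitely generated graded $A^H$-modules that are both $(S_2)$ is an isomorphism if and only if it becomes an isomorphism after localising at every graded prime of height at most one, equivalently if and only if its kernel and cokernel both have GK dimension at most $d-2$. Applied to $\gamma$ and combined with the previous paragraph, this gives one direction at once: if $\GKdim(B/I)\leqslant d-2$ then $\ker\gamma$ and $\coker\gamma$ have GK dimension at most $d-2$, so $\gamma$ is an isomorphism.

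For the converse, suppose $\gamma$ is an isomorphism, so $\gamma_\mathfrak{p}$ is an isomorphism for every graded prime $\mathfrak{p}$ of $A^H$. If $\mathfrak{p}$ has height at most one then $(A^H)_\mathfrak{p}$ is zero- or one-dimensional and regular, so the torsion-free (indeed maximal Cohen--Macaulay) module $A_\mathfrak{p}$ is $(A^H)_\mathfrak{p}$-free; thus $\gamma_\mathfrak{p}$ identifies $B_\mathfrak{p}$ with the full matrix ring $\End_{(A^H)_\mathfrak{p}}(A_\mathfrak{p})$ over the local ring $(A^H)_\mathfrak{p}$, carrying $e$ to a nonzero idempotent (its image is the Reynolds-operator summand $(A^H)_\mathfrak{p}$). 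Any nonzero idempotent of a matrix ring over a local ring is full, so $e$ is full in $B_\mathfrak{p}$, i.e.\ $(B/I)_\mathfrak{p}=0$. As this holds at every height-$\leqslant 1$ graded prime, $\GKdim(B/I)\leqslant d-2$, which completes the argument.

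The step I expect to be the main obstacle is the codimension-one regularity of $A^H$ used in the last paragraph — the claim that localising $A^H$ at a height-one graded prime produces a (graded) discrete valuation ring, so that torsion-free modules localise to free ones there. This is exactly where the structure of the situation (GK-Cohen--Macaulayness, the fact that the AS regular algebra $A$ is a domain with a well-behaved dualising complex, and semisimplicity of $H$) has to be exploited; it is the technical heart of \cite[Theorem 0.3]{bhz}, and the remainder of the proof is bookkeeping with GK dimension, graded local cohomology, and idempotents.
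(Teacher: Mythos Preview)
The paper does not prove this theorem: it is stated with the citation \cite[Theorem 0.3]{bhz} and used as a black box, so there is no proof in the paper to compare your attempt against.

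That said, your sketch has a structural problem worth flagging. You repeatedly localise $A^H$ (and modules over it) at graded primes, speak of height, and in the converse direction invoke that $(A^H)_{\mathfrak p}$ is a regular local ring of dimension at most one so that torsion-free modules are free. But $A$ is a noncommutative AS regular algebra and $A^H$ is typically noncommutative as well; there is in general no Ore localisation at an arbitrary prime, no good notion of height, and no analogue of ``regular in codimension one implies locally a DVR''. The identification $eBe\cong A^H$, $Be\cong A$, and $\gamma=$ left multiplication is fine, as is the observation that $\ker\gamma$ and $\coker\gamma$ are supported on $B/I$; the difficulty is that your $(S_2)$/codimension-one reduction is phrased in commutative language that does not transfer. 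The argument in \cite{bhz} instead works with graded local cohomology and the pertinency invariant directly, avoiding pointwise localisation; if you want to reconstruct a proof, that is the framework to use rather than prime-by-prime analysis.
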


The above criterion is computationally useful, since it is usually much easier to determine the GK dimension of $(A \hash H)/\langle 1 \hash t \rangle$ rather than determine whether the Auslander map is an isomorphism. We remark that all known AS regular algebras are GK-Cohen-Macaulay, and it is conjectured that every AS regular algebra has this property. \\
\indent When the Auslander map is an isomorphism, we are able to deduce strong representation-theoretic results relating the algebras $H$, $A \hash H$, and $A^H$. In the following, a graded module $M$ is said to be \emph{initial} if $M_{<0} = 0$ and $M$ is generated in degree $0$, and the definition of a \emph{maximal Cohen-Macaulay module} can be found in \cite[Definition 3.5]{ckwzii}.

\begin{thm}[{\cite[Theorem A, Theorem C]{ckwzii}}] \label{mcmmodules}
Suppose that the pair $(A,H)$ satisfies Hypothesis \ref{mainhypothesis}. Suppose also that the Auslander map $\gamma : A \hash H \to \End_{A^H}(A)$ is an isomorphism. Then there exist bijections between isomorphism classes of:
\begin{enumerate}[{\normalfont (1)},leftmargin=30pt,topsep=0pt,itemsep=0pt]
\item irreducible left $H$-modules;
\item indecomposable direct summands of $A$, viewed as a left $A^H$-module; and
\item indecomposable finitely generated, projective, initial left $A \hash H$ modules.
\end{enumerate}
If $A$ has GK dimension $2$, then the above are also in bijection with:
\begin{enumerate}[{\normalfont (1)},leftmargin=30pt,topsep=0pt,itemsep=0pt]
\item[\normalfont{(4)}] indecomposable maximal Cohen-Macaulay left $A^H$-modules, up to a degree shift.
\end{enumerate}
The correspondence $(1) \to (3)$ is given by $V \mapsto A \otimes V$, and the correspondence $(3) \to (4)$ is given by $P \mapsto P^H$.
\end{thm}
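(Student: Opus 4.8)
The plan is to establish the correspondences one at a time using two standard machines: \emph{induction} along $H \hookrightarrow A \hash H$ for $(1) \leftrightarrow (3)$, and \emph{projectivisation} (in the sense of Auslander--Reiten--Smal\o) for $(2) \leftrightarrow (3)$, with the hypothesis that $\gamma$ is an isomorphism entering precisely at the second step. Throughout I would use that, since $H$ is semisimple (hence finite-dimensional) and $A$ is noetherian and affine, $A$ is a finitely generated $A^H$-module; this makes $\End_{A^H}(A)$ well-behaved and ensures that Krull--Schmidt applies to the relevant categories. I would also record that the left $A^H$-action and the $H$-action on $A$ commute, since $h \cdot (ra) = r(h \cdot a)$ for $r \in A^H$, so that $A$ is a left $A^H$-module object in the category of $H$-modules.

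For $(1) \leftrightarrow (3)$, note that $A \otimes V \cong (A \hash H) \otimes_H V$ as left $A \hash H$-modules, so $V \mapsto A \otimes V$ is the induction functor $(A \hash H) \otimes_H (-)$. Because $H$ is semisimple every $H$-module is projective, and because $A \hash H$ is free as a right $H$-module, induction preserves projectivity; hence $A \otimes V$ is a projective $A \hash H$-module. It is initial since $A_{<0} = 0$ and $A_0 = \Bbbk$. When $V$ is irreducible, Schur's lemma gives $\End_{A \hash H}(A \otimes V)_0 \cong \End_H(V) = \Bbbk$; as the module is generated in degree $0$ an endomorphism is determined by its degree-$0$ part, so any idempotent is $0$ or the identity and $A \otimes V$ is indecomposable. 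Finally, decomposing the regular module $H \cong \bigoplus_i V_i^{\oplus \dim V_i}$ yields $A \hash H \cong \bigoplus_i (A \otimes V_i)^{\oplus \dim V_i}$, so by Krull--Schmidt every indecomposable projective initial module is some $A \otimes V_i$; this gives the bijection and the stated map $(1) \to (3)$.

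For $(2) \leftrightarrow (3)$ I would invoke projectivisation: for a finitely generated module $M$ over a ring $R$ with $\Gamma = \End_R(M)$, the functor $\Hom_R(M,-)$ restricts to an equivalence from $\add_R(M)$ onto the category of finitely generated projective $\Gamma$-modules, carrying indecomposables to indecomposables. Taking $R = A^H$ and $M = A$, the hypothesis that $\gamma$ is an isomorphism identifies $\Gamma = \End_{A^H}(A)$ with $A \hash H$, so indecomposable summands of $A$ over $A^H$ correspond to indecomposable projective $A \hash H$-modules; restricting to modules generated in degree $0$ (automatic for summands of $A$) singles out the initial ones. The quasi-inverse is $(-)^H$, which matches the displayed map $(3) \to (4)$ via the adjunction $\Hom_{A \hash H}(A \otimes \Bbbk, -) \cong (-)^H$ arising from $A \otimes \Bbbk \cong (A \hash H) \otimes_H \Bbbk$.

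Finally, for the GK-dimension-$2$ statement I would connect $(2)$ and $(3)$ to $(4)$. One inclusion is soft: the finite extension $A^H \subseteq A$ has $\GKdim A^H = \GKdim A = 2$, and GK-Cohen--Macaulayness together with the AS regularity of $A$ forces $\depth_{A^H} A = 2$, so $A$ and all its summands are maximal Cohen--Macaulay, embedding $(2)$ into $(4)$ up to a degree shift. The substantive point, which I expect to be the main obstacle, is the reverse inclusion: that every indecomposable maximal Cohen--Macaulay $A^H$-module is, up to a degree shift, a summand of $A$. For this I would use that $\gldim(A \hash H) = \gldim A = 2$ (finiteness of global dimension is preserved under smashing with a semisimple Hopf algebra), so that through $\gamma$ the algebra $\End_{A^H}(A) = A \hash H$ realises $A^H$ as a two-dimensional ring admitting an endomorphism algebra of finite global dimension $2$; by the dimension-two theory of noncommutative crepant resolutions and Auslander-type correspondences, this forces $\add A$ to exhaust the maximal Cohen--Macaulay $A^H$-modules after allowing shifts, which is exactly the exhaustiveness required for $(4)$.
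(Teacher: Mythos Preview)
The paper does not prove this theorem: it is stated as a citation of \cite[Theorem A, Theorem C]{ckwzii} and no proof is given in the present paper. So there is no ``paper's own proof'' to compare against; the result is imported wholesale.

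That said, your outline for $(1)\leftrightarrow(3)$ and $(2)\leftrightarrow(3)$ is the standard one and is essentially what underlies the cited reference: induction along $H\hookrightarrow A\hash H$ together with the decomposition $A\hash H\cong\bigoplus_i (A\otimes V_i)^{\dim V_i}$ handles the first, and Auslander's projectivisation lemma applied to ${}_{A^H}A$ (using the Auslander map to identify $\End_{A^H}(A)$ with $A\hash H$) handles the second. One small point: your description of the quasi-inverse as $(-)^H$ is really the composite $(3)\to(2)$ followed by the observation that $\Hom_{A^H}(A,-)$ and $(-)^H$ agree on the relevant modules after identifying $A\hash H$ with $\End_{A^H}(A)$; it is worth being explicit that $(-)^H\cong \Hom_{A\hash H}(A,-)$ via the adjunction you mention, and that this matches $\Hom_{A^H}(A,-)$ under $\gamma$.

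The genuine content, as you correctly identify, is the exhaustiveness in $(4)$: that in GK dimension~$2$ every indecomposable maximal Cohen--Macaulay $A^H$-module is, up to shift, a summand of $A$. Your appeal to ``dimension-two NCCR/Auslander-type theory'' is pointing at the right circle of ideas, but as written it is not a proof. In \cite{ckwzii} this step uses that $A\hash H\cong\End_{A^H}(A)$ has finite global dimension equal to $2$ together with a depth/Ext argument specific to dimension~$2$ (an Auslander--Buchweitz style approximation showing $\add A = \CM A^H$); you would need to either reproduce that argument or cite it directly rather than gesture at NCCR theory in general.
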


\indent If $H$ is a semisimple Hopf algebra, then the category of left $H$-modules is a \emph{fusion category}: a $\Bbbk$-linear monoidal category which is rigid (in the sense that we can define duals of objects) and semisimple with finitely many isoclasses of irreducible objects. In particular, the tensor product of two modules decomposes uniquely as a direct sum of irreducible modules, up to isomorphism, which allows us to make the following definition:

\begin{defn}
Let $H$ be a semisimple Hopf algebra. Let $\{ V_0, V_1, \dots, V_n \}$ be a complete list of representatives for isoclasses of irreducible $H$-modules, where $V_0$ is the trivial representation (i.e.\ $V_0 = \Bbbk v$ where $h \cdot v = \varepsilon(h) v$ for all $h \in H$). Fix a representation $V$ of $H$. The \emph{(left) McKay quiver} associated to $V$ is the quiver with vertex set $\{0, 1, \dots, n\}$, and $m_{ij}$ arrows from vertex $i$ to vertex $j$, where 
\begin{align*}
V \otimes V_j \cong \bigoplus_{i=0}^n V_i^{m_{ij}}.
\end{align*}
Note that $m_{ij}$ satisfies
\begin{align*}
m_{ij} = \dim_\Bbbk \Hom_H(V_i, V \otimes V_j).
\end{align*}
One can define the \emph{right McKay quiver} by replacing $V \otimes V_j$ with $V_j \otimes V$. We will only work with left McKay quivers, and will omit the word ``left''. \\
\indent If $H$ acts on an AS regular algebra $A$, then $V \coloneqq A_1$ is a representation of $H$, and we refer to the McKay quiver of $V$ as the \emph{McKay quiver of the action of $H$ on $A$} or \emph{of the pair $(A,H)$}.
\end{defn}

In the classical setting of group actions, one does not need to distinguish between the ``left'' and ``right'' McKay quivers since the category of representations of a group is symmetric, i.e.\ $V \otimes W \cong W \otimes V$ for all representations $V$ and $W$. In general, the category of representations of a semisimple Hopf algebra is not symmetric; for example, when $H$ is the dual of a finite nonabelian group. \\
\indent The following result provides another characterisation of the inner-faithful condition in terms of the McKay quiver:

\begin{lem}
Suppose that $H$ is a semisimple Hopf algebra, and let $V$ be a left $H$-module. Then the following are equivalent:
\begin{enumerate}[{\normalfont (1)},topsep=0pt,itemsep=0pt,leftmargin=*]
\item $V$ is an inner-faithful $H$-module;
\item Each irreducible module $V_i$ appears as a direct summand of $V^{\otimes n_i}$ for some $n_i \geqslant 1$; and
\item The McKay quiver $Q$ of $V$ is strongly connected, in the sense that there is a path from one vertex to any other.
\end{enumerate}
\end{lem}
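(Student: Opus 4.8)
The plan is to prove the three-way equivalence by showing $(3) \Leftrightarrow (2)$ (essentially a reformulation) and then $(2) \Leftrightarrow (1)$, where the latter is the substantive content. Throughout, write $\{V_0, V_1, \dots, V_n\}$ for the irreducible $H$-modules with $V_0$ trivial, and let $Q$ be the McKay quiver of $V$, so that there is an arrow $i \to j$ precisely when $V_j$ occurs in $V \otimes V_i$ — equivalently, when $V_i$ occurs in $V^* \otimes V_j$ (using that $\Hom_H(V_i, V \otimes V_j) = \Hom_H(V^* \otimes V_i, V_j)$ and semisimplicity). The key bookkeeping fact is that $V_k$ appears as a summand of $V^{\otimes N}$ if and only if $\Hom_H(V_0, V^{*\otimes N} \otimes V_k) \neq 0$, which unwinds (again by adjunction and semisimplicity) to the statement that there is a path of length $N$ from vertex $0$ to vertex $k$ in the quiver whose arrows record multiplicities of $V^*$. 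Being careful about left versus right duals and the direction of arrows, one checks that ``$V_k$ is a summand of $V^{\otimes n_k}$ for some $n_k \geq 1$'' for every $k$ is equivalent to: every vertex is reachable from vertex $0$, and also vertex $0$ is reachable from every vertex (taking $V_k = V_0$, or iterating). Combined with the fact that $V \otimes V_0$ already surjects onto $V$ so vertex $0$ has the whole quiver in view, this gives strong connectedness, establishing $(2) \Leftrightarrow (3)$.

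For $(1) \Rightarrow (2)$, I would argue by contraposition. Suppose some $V_k$ never appears in any tensor power $V^{\otimes N}$ ($N \geq 1$). Consider the subset $\mathcal{S}$ of indices $i$ such that $V_i$ does appear in some $V^{\otimes N}$ with $N \geq 1$, together with $i = 0$; this set is closed under the tensor product appearing in the fusion rules in the appropriate sense, and one can build from its complement a nonzero ideal of $H$ annihilating $V$. Concretely, semisimplicity of $H$ gives a block decomposition $H \cong \prod_{i} \mathrm{End}_\Bbbk(V_i)$, and the projection onto the blocks \emph{not} in $\mathcal{S}$ is a two-sided ideal $I$; since $V$ is built (as $H$-module) only from the $V_i$ with $i \in \mathcal{S}$ — indeed $V \otimes V_0 = V$ shows $V$ itself uses only such blocks — we get $IV = 0$. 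The remaining point is that $I$ is in fact a \emph{Hopf} ideal, i.e.\ stable under $\Delta$, $\varepsilon$, $S$. This is where I expect the main obstacle to lie: one must check that the sum of blocks corresponding to a tensor-closed family of irreducibles is a coideal. I would handle this using that $V_i$ occurs in $V \otimes V_j$ iff $V_j$ occurs in $V_i \otimes (\text{something})$, more precisely using duality $S$ sends the block of $V_i$ to the block of $V_i^*$ and the family $\mathcal{S}$ is closed under duals (since $V_i^*$ occurs in $V^{*\otimes N}$, and strong-connectivity-type reachability in both directions shows $V^*$-powers and $V$-powers see the same irreducibles). For the coproduct, the condition $\Delta(I) \subseteq I \otimes H + H \otimes I$ translates, via the block decomposition, into the statement that $V_a \otimes V_b$ decomposes into irreducibles all lying in $\mathcal{S}$ whenever $a, b \in \mathcal{S}$ — which follows because a summand of $V^{\otimes N_a} \otimes V^{\otimes N_b} = V^{\otimes(N_a + N_b)}$ is again in $\mathcal{S}$. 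Thus $I$ is a nonzero Hopf ideal with $IV = 0$, so $V$ is not inner-faithful.

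For $(2) \Rightarrow (1)$, suppose every $V_i$ appears in some $V^{\otimes n_i}$, and let $I$ be a Hopf ideal of $H$ with $IV = 0$; I want $I = 0$. Since $A$ being an $H$-module algebra forces compatibility of the action with $\Delta$ — explicitly $h \cdot (a \otimes b)$ uses $h_{(1)} \otimes h_{(2)}$ — the ideal $I$ annihilating $V$ must also annihilate $V^{\otimes N}$ for all $N \geq 1$: if $x \in I$ then $\Delta^{(N-1)}(x) \in \sum I \otimes H \otimes \cdots \otimes H$ (all tensor slots), and each term kills $V^{\otimes N}$ because the $I$-slot does. Hence $I$ annihilates every $V_i$ for $i \neq 0$, and it annihilates $V_0$ too since $I \subseteq \ker \varepsilon$ (a Hopf ideal is contained in the augmentation ideal). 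Therefore $I$ annihilates every irreducible $H$-module, so $I$ lies in the Jacobson radical of $H$, which is zero by semisimplicity; thus $I = 0$ and $V$ is inner-faithful. Assembling $(1) \Leftrightarrow (2)$ with the earlier $(2) \Leftrightarrow (3)$ completes the proof. The one genuinely delicate verification, as noted, is the Hopf-ideal property in the $(1) \Rightarrow (2)$ direction; everything else is adjunction and semisimplicity bookkeeping.
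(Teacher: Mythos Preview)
Your argument for $(2) \Rightarrow (1)$ is correct and clean. However, the paper does not prove $(1) \Leftrightarrow (2)$ at all; it simply cites \cite[Theorem 1.4]{three}. So your attempt here is more ambitious than the paper's, and it has a real gap in the direction $(1) \Rightarrow (2)$.

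The gap is exactly where you flag it: you need $S(I) \subseteq I$, i.e.\ that your set $\mathcal{S}$ is closed under duals. Your justification --- ``$V_i^*$ occurs in $V^{*\otimes N}$, and strong-connectivity-type reachability in both directions shows $V^*$-powers and $V$-powers see the same irreducibles'' --- is circular. Knowing that $V_i^*$ sits in $(V^*)^{\otimes N}$ only helps if you already know every summand of $V^*$ lies in some $V^{\otimes m}$, which is precisely the dual-closure of $\mathcal{S}$ you are trying to establish. This is not a triviality: it is equivalent to the statement that $V_0$ appears in $V_a^{\otimes N}$ for every simple summand $V_a$ of $V$ and some $N \geqslant 1$, which is a genuine theorem (a Hopf-algebra analogue of Burnside--Brauer, due to Rieffel and Passman--Quinn). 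Without it, your candidate $I$ need not be a Hopf ideal, and the contrapositive fails. Note also that you cannot sidestep the issue by enlarging $\mathcal{S}$ to its dual-closure $\mathcal{S}'$: the resulting $I'$ would still annihilate $V$, but $I' = 0$ would only tell you every irreducible lies in $\mathcal{S}'$, not in $\mathcal{S}$, so you would not recover (2).

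Your treatment of $(2) \Leftrightarrow (3)$ is also too loose. The paper's argument for $(2) \Rightarrow (3)$ is short and worth comparing: given vertices $i,j$, pick any irreducible summand $V_l$ of $V_i \otimes V_j^*$; by (2) it appears in some $V^{\otimes k}$, so $\Hom_H(V_i \otimes V_j^*, V^{\otimes k}) \neq 0$, hence $\Hom_H(V_i, V^{\otimes k} \otimes V_j) \neq 0$, giving a path $i \to j$. Your sketch gestures at duality but never pins down the step that turns ``every vertex has a path to $0$'' into ``$0$ has a path to every vertex''; the clean way to do this is to observe that a path from $k^*$ to $0$ (which (2) provides) is equivalent, via $\Hom_H(V_{k}^*, V^{\otimes m}) \cong \Hom_H(V_0, V^{\otimes m} \otimes V_k)$, to a path from $0$ to $k$.
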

\begin{proof}
\indent $(1) \Leftrightarrow (2)$: This is shown in \cite[Theorem 1.4]{three}. \\
\indent Before establishing the equivalence of (2) and (3), we note that there is a path from vertex $i$ to vertex $j$ in $Q$ of length $k$ if and only if $V_i$ is a direct summand of $V^{\otimes k} \otimes V_j$. \\
\indent $(2) \Rightarrow (3)$: Fix vertices $i$ and $j$ in $Q$, and consider the irreducible $H$-module $V_i \otimes V_j^*$. By assumption, there exists $k \geqslant 1$ such that $V^{\otimes k} \cong (V_i \otimes V_j^*) \oplus U$ for some $H$-module $U$. Tensoring with $V_j$, and noting that $V_j^* \otimes V_j \cong V_0 \oplus W$ for some $H$-module $W$, we obtain
\begin{align*}
V^{\otimes k} \otimes V_j \cong (V_i \otimes V_j^* \otimes V_j) \oplus (U \otimes V_j) \cong (V_i \otimes (V_0 \oplus W)) \oplus (U \otimes V_j) = V_i \oplus (V_i \otimes W) \oplus (U \otimes V_j).
\end{align*}
In particular, $V_i$ is a summand of $V^{\otimes k} \otimes V_j$, so there is a path of length $k$ from vertex $i$ to vertex $j$. \\
\indent $(3) \Rightarrow (2)$: Suppose that $Q$ is strongly connected, and fix an irreducible module $V_i$. By assumption, there is a path from vertex $i$ to vertex $0$ in $Q$, of length $k$, say. In particular, $V^{\otimes k} \otimes V_0 \cong V_i \oplus U$ for some $H$-module $U$. However, $V^{\otimes k} \otimes V_0 \cong V^{\otimes k}$, and so $V_i$ is a summand of $V^{\otimes k}$.
\end{proof}

Since our standing assumption is that the action of a Hopf algebra on an AS regular algebra is inner-faithful, every McKay quiver that we will be concerned with will be strongly connected. \\
\indent Suppose now that $A = \Bbbk[u,v]$ and $H = \Bbbk G$, where $G$ is a finite subgroup of $\SL(2,\Bbbk)$. It is well-known that, up to conjugation, $G$ belongs to one of two infinite families or is one of three exceptional examples. A very brief overview of the classification is as follows:
\begin{center}
\begin{tabular}{c|c|c}
Group $G$  & $|G|$ & McKay quiver  \\ \hline && \\[-8pt]
Cyclic, parametrised by $n \geqslant 1$ & $n+1$ & $\widetilde{\mathbb{A}}_n$ \\
Binary dihedral, parametrised by $n \geqslant 4$ & $4(n-2)$ & $\widetilde{\mathbb{D}}_n$ \\
Binary tetrahedral & $24$ & $\widetilde{\mathbb{E}}_6$ \\
Binary octahedral & $48$ & $\widetilde{\mathbb{E}}_7$ \\
Binary icosahedral  & $120$ & $\widetilde{\mathbb{E}}_8$ 
\end{tabular}
\end{center}
More precisely, the McKay quiver is the double of the listed extended Dynkin graph. The invariant rings $\Bbbk[u,v]^G$ are called \emph{Kleinian singularities}. If $G$ is cyclic, for example, then we say that $\Bbbk[u,v]^G$ is a Type $\mathbb{A}$ (Kleinian) singularity, and similarly for the other cases.

\section{The homological determinant of a Hopf action on an $m$-Koszul AS regular algebra}
In this section, we give a simple formula for the homological determinant of the action of a Hopf algebra on an $m$-Koszul AS regular algebra. The proof strategy mimics that of \cite[Theorem 3.3]{mkoszul}. \\
\indent Throughout this section, let $A$ be an $m$-Koszul AS regular algebra of global dimension $d$ and with Gorenstein parameter $\ell$. Therefore, writing $V = A_1$, there exists a twisted superpotential $\sfw \in V^{\otimes \ell}$ such that, if we set $\mathcal{R} = \partial^{\ell-m}(\sfw)$, then
\begin{align*}
A \cong \scrD(\sfw, \ell-m) = \frac{T_\Bbbk(V)}{\langle \mathcal{R} \rangle}. 
\end{align*}

\begin{lem}
Suppose that $H$ is a semisimple Hopf algebra acting homogeneously on $A$. Then the natural action of $H$ on $T_\Bbbk(V^*)$ descends to an action on $A^!$.
\end{lem}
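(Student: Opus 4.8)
The plan is to reduce the statement to the claim that the defining two-sided ideal $\langle \mathcal{R}^\perp\rangle$ of $A^!$ is stable under the natural $H$-action on $T_\Bbbk(V^*)$; granted that, the quotient $A^!$ inherits an $H$-action, and in fact an $H$-module algebra structure, being a module algebra modulo an $H$-stable ideal. Here the ``natural'' action is the one in which $V^* = A_1^!$ carries the $H$-module structure dual to $V = A_1$ via the antipode, $(h\cdot f)(v) = f(S(h)\cdot v)$, with $T_\Bbbk(V^*) = \bigoplus_{k} (V^*)^{\otimes k}$ given the resulting $H$-module algebra structure through the iterated coproduct.

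First I would check that the space of relations $\mathcal{R}$ is an $H$-submodule of $V^{\otimes m}$. Equip $T_\Bbbk(V)$ with the analogous natural $H$-module algebra structure, which makes sense since $H$ acts homogeneously on $A$ and so $V = A_1$ is an $H$-module. The canonical surjection $\pi\colon T_\Bbbk(V)\to A$ is the identity in degree $1$; since it is multiplicative and $A$ is an $H$-module algebra, a routine Sweedler computation gives $\pi(h\cdot x) = h\cdot\pi(x)$ for all $x$, so $\pi$ is a morphism of $H$-module algebras. Hence $\ker\pi = \langle\mathcal{R}\rangle$ is an $H$-stable graded ideal, and its degree-$m$ component, which we may take to be $\mathcal{R}$, is an $H$-submodule of $V^{\otimes m}$.

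Next I would transfer this to the dual and conclude. Give each $(V^*)^{\otimes k}$ its natural $H$-module structure; the point of the identification $\varphi$ fixed in \eqref{tensordual} is precisely that, with this structure, $\varphi\colon(V^*)^{\otimes k}\to(V^{\otimes k})^*$ is an isomorphism of $H$-modules, where $(V^{\otimes k})^*$ carries the antipode-dual structure, and the evaluation pairing is $H$-equivariant. Granting this, the quotient map $V^{\otimes m}\to V^{\otimes m}/\mathcal{R}$ is $H$-linear, hence so is its transpose, whose image is $\{\, F\in(V^{\otimes m})^* : F|_{\mathcal{R}} = 0 \,\}$, which under $\varphi$ is exactly $\mathcal{R}^\perp$; thus $\mathcal{R}^\perp$ is an $H$-submodule of $(V^*)^{\otimes m}$. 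Finally, a two-sided ideal of an $H$-module algebra generated by an $H$-submodule is automatically $H$-stable, since $h\cdot(arb) = (h_{(1)}\cdot a)(h_{(2)}\cdot r)(h_{(3)}\cdot b)$ with $h_{(2)}\cdot r\in\mathcal{R}^\perp$; hence $\langle\mathcal{R}^\perp\rangle$ is $H$-stable and the action descends to $A^! = T_\Bbbk(V^*)/\langle\mathcal{R}^\perp\rangle$.

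The one genuinely delicate point is the compatibility invoked at the start of the previous paragraph: one must know that ``the natural action of $H$ on $T_\Bbbk(V^*)$'' really is the structure for which $\mathcal{R}^\perp$ is visibly a submodule, i.e.\ the antipode-dual of the action on $V^{\otimes m}$. This is exactly what the (slightly nonstandard, order-reversing) convention in \eqref{tensordual} is engineered to guarantee, and verifying it amounts to a single Sweedler computation using $\Delta(S(h)) = S(h_{(2)})\otimes S(h_{(1)})$. Once that is in place the transpose argument makes the remainder formal; alternatively one could avoid transposes and show $h\cdot f\in\mathcal{R}^\perp$ directly from $H$-equivariance of evaluation and the $H$-stability of $\mathcal{R}$, at the cost of more index-chasing.
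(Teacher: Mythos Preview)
Your proposal is correct and follows essentially the same approach as the paper: both reduce to showing that $\mathcal{R}^\perp$ is $H$-stable, using that $\mathcal{R}$ is $H$-stable together with the compatibility of the identification \eqref{tensordual} with the $H$-actions. The only difference is presentational: the paper carries out the explicit Sweedler/index computation verifying $(h\cdot f)(r) = f(S(h)\cdot r) = 0$ in coordinates, which is precisely the ``alternative'' you mention in your final sentence, whereas you package the same content as a transpose argument.
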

\begin{proof}
It suffices to show that $\mathcal{R}^\perp$ is closed under the $H$-action. Let $f \in \mathcal{R}^\perp$, say $f = \sum_i \alpha_i f_{i_1} \otimes \dots \otimes f_{i_m}$. This means that $\mathcal{R}$ vanishes under $f$, where we remind that reader that we identify $(V^*)^{\otimes m}$ with $(V^{\otimes m})^*$ as in (\ref{tensordual}). Then, if $r = \sum_j \beta_j v_{j_1} \otimes \dots \otimes v_{j_m}$ is any relation in $\mathcal{R}$, we have
\begin{align*}
(h \cdot f)(r) &= \sum_{i,j} \alpha_i \beta_j \big((h_{(1)} f_{i_1}) \otimes \dots \otimes (h_{(m)} f_{i_m})\big) \big( v_{j_1} \otimes \dots \otimes v_{j_m} \big) \\ 
&= \sum_{i,j} \alpha_i \beta_j (h_{(m)} f_{i_m})(v_{j_1}) \dots (h_{(1)} f_{i_1})(v_{j_m}) \\
&= \sum_{i,j} \alpha_i \beta_j f_{i_m}\big(S(h_{(m)}) v_{j_1}\big) \dots f_{i_1} \big(S(h_{(1)}) v_{j_m}\big) \\
&= \sum_{i,j} \alpha_i \beta_j \big(f_{i_1} \otimes \dots \otimes f_{i_m}\big) \big( (S(h_{(m)}) v_{j_1}) \otimes \dots \otimes (S(h_{(1)}) v_{j_m}) \big) \\
&= \sum_{i,j} \alpha_i \beta_j \big(f_{i_1} \otimes \dots \otimes f_{i_m}\big) \big( S(h) \cdot (v_{j_1} \otimes \dots \otimes v_{j_m}) \big) \\
&= f(S(h) \cdot r) \\
&= 0,
\end{align*}
where the last equality follows since $R$ is closed under the action from $H$.
\end{proof}

By definition, the Koszul complex $P^\bullet$ is a projective resolution of the left trivial module $\Bbbk = A/A_{\geqslant 1}$. It is easy to see that each map in this resolution is $H$-equivariant, and therefore $\Ext_A^i(\Bbbk,\Bbbk)$ is an $H$-module. In fact, the differential in the complex $\Hom_A(P^\bullet, \Bbbk)$ is zero \cite[p.\ 77]{berger2}, so $\Ext_A^i(\Bbbk,\Bbbk) = \Hom_A(P_{\rho(i)},\Bbbk)$. We then have a chain of equalities and $H$-module isomorphisms:
\begin{align*}
\Ext_A^i(\Bbbk,\Bbbk) &= \Hom_A(P_{\rho(i)},\Bbbk) = \Hom_A(A \otimes (A^!_{\rho(i)})^*, \Bbbk) \\ 
&\cong \Hom_\Bbbk((A^!_{\rho(i)})^*, \Hom_A(A,\Bbbk)) \cong \Hom_\Bbbk((A^!_{\rho(i)})^*, \Bbbk) \\
&\cong A^!_{\rho(i)}.
\end{align*}
In particular, $\Ext_A^d(\Bbbk,\Bbbk) \cong A^!_\ell$. Now, by \cite[Lemma 2.7]{mkoszul}, if we define
\begin{align*}
\mathsf{W} = \bigcap_{s+m+t = \ell} V^{\otimes s} \otimes R \otimes V^{\otimes t},
\end{align*}
then there is a natural identification $A_\ell^! \cong \mathsf{W}^*$. Tracing through the isomorphism, we find that the induced $H$-action on $\mathsf{W}^*$ is simply the usual action of $H$ on $\mathsf{W}^*$ viewed as a subspace (in fact, an $H$-submodule) of $(V^{\otimes \ell})^* \cong (V^*)^{\otimes \ell}$. Moreover $\sfw \in \mathsf{W}$, and this space is one-dimensional, i.e.\ $\mathsf{W} = \Bbbk \sfw$ \cite[Lemma 2.12]{mkoszul}. \\ 
\indent We now have all of the ingredients required to prove the main result of this section:

\begin{thm} \label{hdetthm}
Suppose that the pair $(A,H)$ satisfies Hypothesis \ref{mainhypothesis}. Then $\mathsf{W} = \Bbbk \sfw$ is an $H$-submodule of $V^{\otimes \ell}$, and the homological determinant of the action of $H$ on $A$ satisfies
\begin{align*}
h \cdot \sfw = \hdet_A(h) \sfw.
\end{align*}
\end{thm}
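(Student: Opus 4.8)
The key point is to track the $H$-action through the chain of isomorphisms $\Ext_A^d(\Bbbk,\Bbbk) \cong A^!_\ell \cong \mathsf{W}^*$ established in the discussion preceding the theorem, and then identify the scalar by which $h$ acts. First, recall that the homological determinant is defined via the action of $H$ on $\Ext_A^d(\Bbbk,\Bbbk)$: there is an algebra map $\eta : H \to \Bbbk$ with $h \cdot x = \eta(h) x$ for all $x \in \Ext_A^d(\Bbbk,\Bbbk)$, and $\hdet_A = \eta \circ S$. So it suffices to show that $H$ acts on the one-dimensional space $\mathsf{W} = \Bbbk\sfw$ by the character $\eta \circ S$, i.e.\ that $h \cdot \sfw = (\eta \circ S)(h)\sfw = \hdet_A(h)\sfw$.

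Next I would assemble the identifications. From the preceding paragraph we have $\Ext_A^d(\Bbbk,\Bbbk) \cong A^!_\ell$ as $H$-modules, and by \cite[Lemma 2.7, Lemma 2.12]{mkoszul} there is a natural identification $A^!_\ell \cong \mathsf{W}^*$ of $H$-modules, where $\mathsf{W} = \bigcap_{s+m+t=\ell} V^{\otimes s}\otimes R\otimes V^{\otimes t}$ is the one-dimensional $H$-submodule $\Bbbk\sfw$ of $V^{\otimes\ell} \cong ((V^*)^{\otimes\ell})^*$. Hence $\Ext_A^d(\Bbbk,\Bbbk) \cong \mathsf{W}^* = (\Bbbk\sfw)^*$ as $H$-modules. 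Thus $H$ acts on $\Ext_A^d(\Bbbk,\Bbbk)$ by the character dual to the character by which it acts on $\mathsf{W}$: if $h \cdot \sfw = \chi(h)\sfw$ for a character $\chi : H \to \Bbbk$, then the contragredient action on $\mathsf{W}^*$ is $h \cdot \psi = \psi(S(h)\cdot -)$, so $h$ acts on $\mathsf{W}^*$ by $\chi \circ S$. Comparing with the definition of $\eta$ gives $\eta = \chi \circ S$, hence $\hdet_A = \eta \circ S = \chi \circ S^2 = \chi$, using $S^2 = \id_H$ for semisimple $H$. Therefore $h\cdot\sfw = \chi(h)\sfw = \hdet_A(h)\sfw$, as required.

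**Main obstacle.** The genuinely delicate point is pinning down exactly which $H$-module structure $\mathsf{W}^*$ carries under all these identifications, and in particular confirming that the dual pairing $(V^*)^{\otimes\ell}\otimes V^{\otimes\ell}\to\Bbbk$ from \eqref{tensordual} is $H$-equivariant (which is why the paper chose that particular convention — this is remarked explicitly after \eqref{tensordual}). One must check that (i) each differential $\cdot e$ in the Koszul complex is $H$-equivariant, so that $\Ext_A^i(\Bbbk,\Bbbk) = \Hom_A(P_{\rho(i)},\Bbbk) \cong A^!_{\rho(i)}$ is an $H$-module isomorphism — this uses that $e = \sum v_i\otimes\phi_i$ is $H$-invariant, which follows from the equivariance of the evaluation pairing and is essentially the content of the lemma that $H$ acts on $A^!$; and (ii) that the identification $A^!_\ell \cong \mathsf{W}^*$ from \cite{mkoszul} really does intertwine the $H$-actions, which the paper has already asserted in the sentence "Tracing through the isomorphism, we find that the induced $H$-action on $\mathsf{W}^*$ is simply the usual action...". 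Granting these (which the excerpt has set up), the remaining work is the purely formal dualization-plus-$S^2=\id$ computation above. I would also remark that the case $H = \Bbbk G$ recovers the classical fact that $G$ acts on $\sfw$ by $\det$, consistent with \cite[Theorem 3.3]{mkoszul}.
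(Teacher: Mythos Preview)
Your proposal is correct and takes essentially the same approach as the paper. Both arguments identify $\Ext_A^d(\Bbbk,\Bbbk) \cong \mathsf{W}^*$ as $H$-modules, compute the contragredient action to relate the character $\chi$ by which $H$ acts on $\Bbbk\sfw$ to $\eta$, and then invoke $S^2 = \id_H$ to conclude $\chi = \eta \circ S = \hdet_A$; the paper just unwinds this elementwise (applying $S(h)$ to a nonzero $f \in \mathsf{W}^*$ and tracing through the isomorphism) rather than phrasing it as the character identity $\eta = \chi \circ S$ that you write.
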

\begin{proof}
Since $\mathsf{W}^*$ is a one-dimensional representation of $H$, so too is its dual $\mathsf{W}^{**} \cong \mathsf{W}$, establishing the first claim. \\
\indent Now fix $h \in H$. Since $\mathsf{W}$ is one-dimensional representation of $H$, we have $h \cdot \sfw = \lambda \sfw$ for some $\lambda \in \Bbbk$. Then, if $f \in W^*$, we have
\begin{align*}
(S(h) \cdot f)(w) = f(S^2(h) \cdot w) = f(h \cdot w) = \lambda f(w),
\end{align*}
since $S^2 = \id_H$, and so $S(h) \cdot f = \lambda f$. Let $\psi : \Ext_A^d(\Bbbk,\Bbbk) \to \mathsf{W}^*$ be the $H$-module isomorphism established before the statement of the theorem. By definition, the $H$-action on $\Ext_A^d(\Bbbk,\Bbbk)$ satisfies $h \cdot x = \eta(h) x$ for all $0 \neq x \in \Ext_A^d(\Bbbk,\Bbbk)$, where $\eta \circ S = \hdet_A$. Now if $0 \neq f \in \mathsf{W}^*$, then there exists $0 \neq x \in \Ext_A^d(\Bbbk,\Bbbk)$ with $\psi(x) = f$, and we have
\begin{align*}
\lambda f = S(h) \cdot f = S(h) \cdot \psi(x) = \psi(S(h) \cdot x) = \psi\big((\eta \circ S\big)(h) \cdot x) = (\eta \circ S)(h) \psi(x) = \hdet_A(h) f.
\end{align*}
Therefore $\lambda = \hdet_A(h)$, i.e.\ $h \cdot \sfw = \hdet_A(h) \sfw$.
\end{proof}

If $H = \Bbbk G$, then this result recovers \cite[Theorem 3.3]{mkoszul}. In the special case where $H = (\Bbbk G)^*$ is the dual of a group algebra (equivalently, $A$ is $G$-graded) it is particularly easy to calculate the homological determinant.

\begin{cor} \label{hdetdual}
Let $A \cong \scrD(\sfw,\ell-m)$ be as above, let $G$ be a finite group, and suppose that the Hopf algebra $H = (\Bbbk G)^*$ acts homogeneously on $A$; equivalently, $A$ is $G$-graded. Let $\{ f_g \mid g \in G \}$ be the basis for $H$ which is dual to the standard basis of $\Bbbk G$. Then $\sfw$ is $G$-homogeneous, and the homological determinant of the action of $H$ on $A$ satisfies
\begin{align*}
\hdet_A(f_g) = \left\{ \begin{array}{ll}
1 & \text{if } \deg_G(\sfw) = g, \\
0 & \text{otherwise}.
\end{array} \right.
\end{align*}
\end{cor}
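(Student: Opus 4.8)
The plan is to specialize Theorem \ref{hdetthm} to the case $H = (\Bbbk G)^*$ and unwind what the $H$-action looks like in terms of the $G$-grading. First I would recall the standard dictionary: giving $A$ the structure of a left $(\Bbbk G)^*$-module algebra is the same as giving $A$ a $G$-grading $A = \bigoplus_{g \in G} A_g$ compatible with the algebra structure, and under this dictionary the dual basis element $f_g \in (\Bbbk G)^*$ acts on a $G$-homogeneous element $a \in A_h$ by $f_g \cdot a = \delta_{g,h}\, a$. (This is because $f_g$ is the functional picking out the coefficient of $g$, and the module structure dual to the $\Bbbk G$-comodule structure is exactly projection onto the degree-$g$ component.) The inner-faithfulness hypothesis and the fact that the action is homogeneous are inherited from Hypothesis \ref{mainhypothesis}, so Theorem \ref{hdetthm} applies.

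Next I would observe that $\sfw \in V^{\otimes \ell}$ is $G$-homogeneous. Indeed, by Theorem \ref{hdetthm}, $\mathsf{W} = \Bbbk \sfw$ is a one-dimensional $H$-submodule of $V^{\otimes \ell}$; one-dimensional $(\Bbbk G)^*$-modules correspond to single group elements, i.e.\ to homogeneous components, so $\mathsf{W}$ sits inside a single graded piece $(V^{\otimes \ell})_{g_0}$ for a unique $g_0 \in G$, which is by definition $\deg_G(\sfw)$. Alternatively one can argue directly: $V = A_1$ is a graded vector space, the grading on $V^{\otimes \ell}$ is the total-degree grading, $\mathsf{W} = \bigcap_{s+m+t=\ell} V^{\otimes s} \otimes R \otimes V^{\otimes t}$ is an intersection of graded subspaces (since $R$ is graded, the relations being homogeneous under the $H$-action), hence graded, and being one-dimensional it is concentrated in a single degree.

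Finally I would plug in. For $h = f_g$, Theorem \ref{hdetthm} gives $f_g \cdot \sfw = \hdet_A(f_g)\, \sfw$. On the other hand, since $\sfw$ is $G$-homogeneous of degree $\deg_G(\sfw)$, the action formula $f_g \cdot \sfw = \delta_{g,\,\deg_G(\sfw)}\,\sfw$ holds. Comparing the two expressions (and using $\sfw \neq 0$) yields $\hdet_A(f_g) = 1$ if $\deg_G(\sfw) = g$ and $0$ otherwise, as claimed. The only mild subtlety — and the step I would be most careful about — is fixing the precise dictionary between $(\Bbbk G)^*$-module structures and $G$-gradings and confirming the sign/orientation convention so that $f_g$ really acts as projection onto $A_g$ rather than, say, $A_{g^{-1}}$; this is where the identification \eqref{tensordual} and the conventions on the $H$-action on $V^{\otimes \ell}$ need to be tracked, but it is bookkeeping rather than a genuine obstacle. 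Everything else is an immediate consequence of Theorem \ref{hdetthm}.
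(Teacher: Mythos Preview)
Your proposal is correct and follows essentially the same approach as the paper: both arguments invoke Theorem \ref{hdetthm} to see that $\Bbbk\sfw$ is a one-dimensional $(\Bbbk G)^*$-module, identify such modules with single group elements (so $\sfw$ is $G$-homogeneous), and then read off $\hdet_A(f_g)$ by comparing $f_g\cdot\sfw=\hdet_A(f_g)\sfw$ with the projection formula $f_g\cdot\sfw=\delta_{g,\deg_G(\sfw)}\sfw$. The paper phrases the middle step in terms of the orthogonal idempotents $f_g$ and their associated characters $\chi_g$, but this is exactly your dictionary.
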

\begin{proof}
Since $H \cong \Bbbk^{|G|}$ as an algebra, and the basis consists of pairwise orthogonal idempotents, the one-dimensional representations of $H$ are of the form
\begin{align*}
\chi_g : H \to \Bbbk, \quad \chi_g(f_h) = \delta_{gh}.
\end{align*}
Since $\mathsf{W} = \Bbbk \sfw$ is a one-dimensional $H$-module, there exists $g \in G$ with $f_g \cdot \sfw = \sfw$, and $f_h \cdot \sfw = 0$ for $h \neq g$. By the definition of the coproduct in $(\Bbbk G)^*$, $f_g \cdot \sfw$ is simply the $g$-component of $\sfw$ under the $G$-grading, but also $f_g \cdot \sfw = \sfw$, and so $\sfw$ is $G$-homogeneous of $G$-degree $g$. It now follows that $\hdet_A$ has the claimed form by Theorem \ref{hdetthm}.
\end{proof}

\section{Hopf smash products are derivation-quotient algebras} \label{derquotsec}
In this section, we show that if the pair $(A,H)$ satisfies Hypothesis \ref{mainhypothesis} (so, in particular, $A$ is a derivation-quotient algebra), then the smash product $A \hash H$ is also a derivation-quotient algebra, and we give a precise description of the corresponding superpotential. A version of this result for actions of finite groups on polynomial rings was established in \cite[Theorem 3.2]{bsw}. This was later generalised to actions of finite groups on AS regular algebras in \cite[Theorem 4.16]{wuskew}, with the additional restriction that the action has trivial homological determinant. Our result strengthens this by removing the hypothesis on the homological determinant, as well as by replacing the finite group by a semisimple Hopf algebra.  \\
\indent The main difficulty in generalising \cite[Theorem 4.16]{wuskew} to the Hopf algebra setting is the lack of control one has over the form of the coproduct. When $H = \Bbbk G$ is a group algebra, it has a distinguished basis given by the elements of $G$, and the coproduct has a simple form. As a result, in \cite{wuskew} the authors were able to perform explicit calculations to deduce their result. On the other hand, an arbitrary semisimple Hopf algebra has no distinguished basis in which the coproduct has a nice form. Our approach is to fix a basis of $H$ by appealing to its Artin--Wedderburn decomposition, which provides some control over the form of the coproduct. \\
\indent Since $A$ is an $m$-Koszul AS regular algebra, we assume that $A = T_\Bbbk(V)/\langle \partial^{\ell-m} \sfw \rangle = \scrD(\sfw,\ell-m)$ where $\ell$ is the Gorenstein parameter of $A$, $V = A_1$ is a finite-dimensional vector space $V$ with basis $\{ v_1, \dots, v_r \}$, and
\begin{align*}
\sfw \coloneqq \sum_q \alpha_q v_{q_1} \otimes \dots \otimes v_{q_\ell}
\end{align*}
is a $\sigma$-twisted superpotential; that is, there exists $\sigma \in \Aut(V)$ such that
\begin{align*}
\llbracket\phi^{\sigma^*},\sfw \rrbracket = \llbracket \sfw,\phi \rrbracket 
\end{align*}
for all $\phi \in V^*$. In particular, $V$ is a left $H$-module, and we can turn $V \otimes H$ into an $H$-bimodule where the left $H$-action is diagonal (via the coproduct) and the right $H$-action acts only on the right tensorand by right multiplication. Then the map
\begin{gather*}
\Psi : T_\Bbbk(V) \hash H \to T_H(V \otimes H), \quad \Psi((u_1 \otimes \dots \otimes u_n) \hash h) = (u_1 \otimes 1) \otimes_H \dots \otimes_H (u_{n-1} \otimes 1) \otimes_H (u_n \otimes h) 
\end{gather*}
is an algebra isomorphism. Here, we have written $\otimes_H$ to help emphasise over which rings some of these tensors are formed. We will not use this notation going forwards, and will instead try to distinguish various tensors by bracketing terms in the same way as above. Our claim is that $\Psi(\sfw \hash 1)$ is a twisted superpotential, for an appropriate twist $\nu \in \Aut(V \otimes H)$, and that there is an isomorphism 
\begin{align*}
A \hash H \cong \scrD(\Psi(\sfw \hash 1),\ell-m),
\end{align*}
where $\ell$ and $m$ are as above.\\
\indent Suppose that $H$ has finite-dimensional irreducible modules $V_0, \dots, V_n$, where $V_0$ is the trivial module (that is, it corresponds to the counit $\varepsilon$), and fix an isomorphism with the Artin--Wedderburn decomposition of $H$,
\begin{align}
H \cong \bigoplus_{k=0}^n \text{Mat}_{\dim V_k}(\Bbbk). \label{awdecomp}
\end{align}
By transferring the Hopf algebra structure of $H$ along this isomorphism, the right hand side becomes a Hopf algebra; by an abuse of notation, we will think of $H$ as being equal to this Hopf algebra. We write $e_{ij}^{(k)}$ for the $(i,j)$th matrix unit in the $k$th component of $H$. \\
\indent To show that $\Psi(\sfw \hash 1)$ is a twisted superpotential, we need to show that it satisfies
\begin{align*}
h \cdot \Psi(\sfw \hash 1) = \Psi(\sfw \hash 1) \cdot \nu(h)
\end{align*}
for all $h \in H$ and for a suitable twist $\nu$, and that
\begin{align*}
\llbracket\phi^{\nu^*}, \Psi(\sfw \hash 1) \rrbracket = \llbracket \Psi(\sfw \hash 1), \phi \rrbracket
\end{align*}
for all $\phi \in (V \otimes H)^*$ (see Definition \ref{superpotentialdefn2}). For the latter of these, we will need to consider the maps $R \phi^{\nu^*} \in \Hom((V \otimes H)_H,H_H)$ and $L \phi \in \Hom({}_H (V \otimes H),{}_H H)$ which satisfy the identity (\ref{tracefunction}). To define the natural isomorphisms $R$ and $L$, we need to make a choice of a nondegenerate trace function $\Tr$ for $H$. Since $H$ is a semisimple Hopf algebra, it has a one-dimensional space of (left and right) integrals for $H$, i.e.\ maps $T \in H^*$ with the property 
\begin{align*}
\phi * T = \phi(1_H) T = T * \phi
\end{align*}
for all $\phi \in H^*$, where $*$ is the convolution product in $H^*$. By \cite[2.1.3 Theorem]{montgomery2}, provided that $T \neq 0$, the associated bilinear form $h \otimes k \mapsto T(hk)$ is nondegenerate. 
Therefore we can, and will, take our trace function to be the unique integral $\Tr$ for $H^*$ with the property that $\Tr(1_H) = 1$. With this choice, we have the following result:

\begin{lem} \label{tracelem}
The trace $\Tr$ defined above satisfies 
\begin{align*}
\Tr(e_{ij}^{(k)}) = \frac{\dim V_k}{\dim H} \delta_{ij}.
\end{align*}
\end{lem}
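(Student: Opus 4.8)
The plan is to identify $\Tr$ with the normalized character of the left regular representation of $H$, and then to read off its values on the matrix units from the Artin--Wedderburn decomposition. Write $d_k = \dim V_k$, so $\dim H = \sum_{k=0}^n d_k^2$. First recall that, since $H$ is semisimple, so is $H^*$, and hence (by the Larson--Sweedler theorem) $H^*$ has a one-dimensional space of integrals, which for a semisimple Hopf algebra is two-sided. Moreover $\varepsilon_{H^*}$ is evaluation at $1_H$, so Maschke's theorem for Hopf algebras guarantees that a nonzero integral of $H^*$ does not vanish at $1_H$; thus the normalization $\Tr(1_H)=1$ determines $\Tr$ uniquely, and any nonzero $\lambda \in H^*$ that is an integral with $\lambda(1_H)=1$ must equal $\Tr$.

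The key step is the claim that $\chi_{\mathrm{reg}} \coloneqq \mathrm{tr}\big(L_{(-)}\big) \in H^*$, the character of the left regular representation, is a nonzero integral in $H^*$. Granting this, $\tfrac{1}{\dim H}\chi_{\mathrm{reg}}$ is an integral taking the value $1$ at $1_H$ (as $\chi_{\mathrm{reg}}(1_H) = \dim H$), so $\Tr = \tfrac{1}{\dim H}\chi_{\mathrm{reg}}$ by uniqueness. That $\chi_{\mathrm{reg}}$ is an integral is standard (see \cite{montgomery2}); one way to see it is via the left $H$-module isomorphism $H \otimes M \cong H^{\oplus \dim_\Bbbk M}$, valid for every finite-dimensional left $H$-module $M$, and likewise $M \otimes H \cong H^{\oplus \dim_\Bbbk M}$, whose character forms read $\chi_{\mathrm{reg}} * \chi_M = \chi_M(1_H)\chi_{\mathrm{reg}}$ and $\chi_M * \chi_{\mathrm{reg}} = \chi_M(1_H)\chi_{\mathrm{reg}}$, i.e.\ $\chi_{\mathrm{reg}}$ is absorbed (on both sides) by the characters of $H$; one then argues that this forces $\chi_{\mathrm{reg}}$ into the (one-dimensional, two-sided) space of integrals of $H^*$.

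With $\Tr = \tfrac{1}{\dim H}\chi_{\mathrm{reg}}$ in hand, the computation is routine: under the decomposition \eqref{awdecomp}, the left regular module ${}_H H$ is $\bigoplus_{k=0}^n V_k^{\oplus d_k}$, so $\chi_{\mathrm{reg}} = \sum_{k=0}^n d_k\, \chi_{V_k}$, where $\chi_{V_k}$ is the character of $V_k$; and since $V_k$ is the standard module of the block $\mathrm{Mat}_{d_k}(\Bbbk)$ while the other blocks annihilate it, $\chi_{V_k}(e_{ij}^{(l)}) = \delta_{kl}\delta_{ij}$. Hence $\chi_{\mathrm{reg}}(e_{ij}^{(k)}) = d_k\delta_{ij}$ and $\Tr(e_{ij}^{(k)}) = \tfrac{d_k}{\dim H}\delta_{ij} = \tfrac{\dim V_k}{\dim H}\delta_{ij}$, as claimed (as a consistency check, $\Tr(1_H) = \sum_k d_k\cdot\tfrac{d_k}{\dim H} = 1$). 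The main obstacle is the key step, namely verifying that $\chi_{\mathrm{reg}}$ really is an integral: the module-isomorphism argument only yields absorption against characters of $H$-modules, which fail to span $H^*$ when $H$ is noncommutative, so one must either supply the additional argument placing $\chi_{\mathrm{reg}}$ in the space of integrals or cite this as known. Alternatively, one can bypass this by using that semisimple Hopf algebras are symmetric algebras, so $\Tr$ is a nondegenerate symmetric trace form and therefore $\Tr(e_{ij}^{(k)}) = c_k\delta_{ij}$ for scalars $c_k$, and then pin down $c_k = \tfrac{\dim V_k}{\dim H}$ using the Frobenius duality between $\Tr$ and the integral $\Lambda \in H$.
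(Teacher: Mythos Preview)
Your proposal is correct, and it is essentially an unpacking of what the paper treats as a black box: the paper's own proof is the single line ``This follows from \cite[Proposition 2.13 (2)]{montgomeryrep}.'' That cited result is precisely the identification $\Tr = \tfrac{1}{\dim H}\chi_{\mathrm{reg}}$ you establish, together with its evaluation on matrix units. So you are not taking a different route; you are supplying the content of the citation.

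Your self-diagnosed gap is accurate: the isomorphism $H \otimes M \cong H^{\oplus \dim M}$ only shows that $\chi_{\mathrm{reg}}$ is absorbed by characters under convolution, and characters span only the cocommutative part of $H^*$. The cleanest way to close this (and the one underlying the cited result) is to note that for a finite-dimensional Hopf algebra with $S^2 = \id$, the functional $h \mapsto \tr(L_h)$ is a two-sided integral in $H^*$; this is a short direct computation (due essentially to Larson--Radford) and does not require the character-absorption detour. In characteristic~$0$ with $H$ semisimple, $S^2 = \id$ holds, so your key step is justified. Once that is granted, your Artin--Wedderburn computation of $\chi_{\mathrm{reg}}(e_{ij}^{(k)}) = d_k\delta_{ij}$ is routine and correct.
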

\begin{proof}
This follows from \cite[Proposition 2.13 (2)]{montgomeryrep}.
\end{proof}

\indent If $f: H \to \Bbbk$ is any algebra homomorphism, then we can define the \emph{left (respectively, right) winding automorphism of $H$ associated to $f$}, denoted ${}_f \Xi$, (respectively, $\Xi_f$) by
\begin{align*}
{}_f \Xi : H \to H, \quad {}_f \Xi(h) = f(h_{(1)}) h_{(2)}, \\
\Xi_f : H \to H, \quad \Xi_f(h) = h_{(1)} f(h_{(2)}).
\end{align*}
It is straightforward to check that ${}_{f \circ S} \Xi$ is inverse to ${}_f \Xi$ and $\Xi_{f \circ S}$ is inverse $\Xi_f$, where $S$ is the antipode of $H$, so both of these maps are algebra automorphisms of $H$. We record the following key lemma.

\begin{lem} \label{integrallemma}
Let $T$ be a (left and right) integral for $H$. Then, for all $x,y \in H$:
\begin{enumerate}[{\normalfont (1)},leftmargin=*,topsep=0pt,itemsep=0pt]
\item $x_{(1)} T(x_{(2)}) = T(x) 1_H = T(x_{(1)}) x_{(2)}$;
\item $x_{(1)} T(x_{(2)} y) = T(x y_{(2)}) S(y_{(1)})$;
\item If $f: H \to \Bbbk$ is any algebra homomorphism, then $T({}_f \Xi(x)) = T(x) = T(\Xi_f (x))$.
\end{enumerate}
\begin{proof}
\begin{enumerate}[{\normalfont (1)},wide=0pt,topsep=0pt,itemsep=0pt]
\item This is \cite[Remark 5.1.2]{dascalescu}.
\item We recall the standard notation $\rightharpoonup$ for the left action of $H$ on $H^*$ given by $(h \rightharpoonup \phi)(k) \coloneqq \phi(kh)$. To establish the result, it suffices to show that both sides of the claimed equality are equal after applying an arbitrary $\phi \in H^*$:
\begin{align*}
\phi\big(x_{(1)} T(x_{(2)} y)\big) &= \phi(x_{(1)}) T(x_{(2)} y) = \phi(x_{(1)}) T\big(x_{(2)} \varepsilon(y_{(1)}) y_{(2)}\big) = \phi\big(x_{(1)} \varepsilon(y_{(1)})\big) T(x_{(2)} y_{(2)}) \\ 
&= \phi\big(x_{(1)} y_{(2)} S(y_{(1)})\big) T(x_{(2)} y_{(3)})
= (S(y_{(1)}) \rightharpoonup \phi)(x_{(1)} y_{(2)}) T(x_{(2)} y_{(3)}) \\
&= ((S(y_{(1)}) \rightharpoonup \phi) * T)(x y_{(2)}) = (S(y_{(1)}) \rightharpoonup \phi)(1) T(x y_{(2)}) = \phi(S(y_{(1)})) T(x y_{(2)}) \\
&= \phi\big( T(x y_{(2)}) S(y_{(1)}) \big),
\end{align*}
where the equality when moving from the first line to the second line requires the semisimplicity of $H$.
\item Direct calculation gives:
\begin{align*}
T({}_f \Xi(x)) = T\big( f(x_{(1)}) x_{(2)} \big) = f(x_{(1)}) T(x_{(2)}) = f\big( x_{(1)} T(x_{(2)}) \big) = f(T(x) 1_H) = T(x) f(1_H) = T(x),
\end{align*}
using part (1). The other equality is similar. \qedhere
\end{enumerate}
\end{proof}
\end{lem}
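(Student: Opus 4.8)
The plan is to treat the three parts in sequence, with part (1) as the engine from which part (3) falls out in two lines, while part (2) needs one separate Sweedler-notation computation. Part (1) is simply the pointwise reformulation of the fact that $T$ is a two-sided integral in $H^*$. I would unwind the defining relations $\phi * T = \phi(1_H)T = T*\phi$: evaluating $\phi * T = \phi(1_H)T$ at $x$ gives $\phi\big(x_{(1)}T(x_{(2)})\big) = \phi\big(T(x)1_H\big)$ for all $\phi \in H^*$, and since $H$ is finite-dimensional, $H^*$ separates the points of $H$, whence $x_{(1)}T(x_{(2)}) = T(x)1_H$. The identity $T(x_{(1)})x_{(2)} = T(x)1_H$ falls out symmetrically from $T*\phi = \phi(1_H)T$. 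This is a standard fact about integrals, recorded here for convenience.

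For part (2), I would fix $x,y \in H$ and verify the identity after pairing with an arbitrary $\phi \in H^*$. Beginning with $\phi\big(x_{(1)}T(x_{(2)}y)\big)$, the first step is cosmetic — write $y = \varepsilon(y_{(1)})y_{(2)}$ and pull out the scalar — and the decisive step is to replace $\varepsilon(y_{(1)})1_H$ by $y_{(2)}S(y_{(1)})$. This replacement is legitimate exactly because $H$ is semisimple, so $S^2 = \id_H$ and hence $\varepsilon(h)1_H = h_{(2)}S(h_{(1)})$; this is the only point at which semisimplicity enters. After the substitution, the expression reorganises, using the action $(h \rightharpoonup \phi)(k) \coloneqq \phi(kh)$, into $\big((S(y_{(1)}) \rightharpoonup \phi) * T\big)(xy_{(2)})$, and now the integral property of $T$ collapses the convolution to $(S(y_{(1)}) \rightharpoonup \phi)(1_H)\,T(xy_{(2)}) = \phi\big(S(y_{(1)})\big)T(xy_{(2)}) = \phi\big(T(xy_{(2)})S(y_{(1)})\big)$. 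Since $\phi$ was arbitrary, part (2) follows.

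Part (3) is then a short consequence of part (1): one expands $T\big({}_f\Xi(x)\big) = T\big(f(x_{(1)})x_{(2)}\big) = f(x_{(1)})T(x_{(2)}) = f\big(x_{(1)}T(x_{(2)})\big) = f\big(T(x)1_H\big) = T(x)f(1_H) = T(x)$, using $x_{(1)}T(x_{(2)}) = T(x)1_H$ from part (1) together with $f(1_H)=1$ (valid since $f$ is an algebra homomorphism), and the identity $T(\Xi_f(x)) = T(x)$ comes out identically from $T(x_{(1)})x_{(2)} = T(x)1_H$. The only genuinely delicate point in the whole argument is keeping track of which Sweedler legs of $y$ are consumed in part (2) and spotting the exact moment the expression becomes a convolution product, so that the integral relation applies; the rest is formal.
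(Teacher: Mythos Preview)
Your proposal is correct and follows the paper's proof essentially verbatim: part (1) is the standard unwinding of the integral condition (which the paper simply cites), part (2) is the same Sweedler computation pairing against an arbitrary $\phi$, inserting $\varepsilon(y_{(1)}) = y_{(2)}S(y_{(1)})$ via semisimplicity, and collapsing the resulting convolution with $T$, and part (3) is the identical one-line deduction from part (1). There is nothing to add.
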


\indent To show that $\Psi(\sfw \hash 1)$ is a twisted superpotential, we first need to show that it is a twisted weak potential for some twist $\nu$. Henceforth, let $\Xi$ denote the the left winding automorphism of $H$ associated to the algebra morphism $\hdet : H \to \Bbbk$, i.e.\
\begin{align*}
\Xi(h) = \hdet(h_{(1)}) h_{(2)}.
\end{align*}
Let $\nu$ be the graded $\Bbbk$-algebra automorphism of $T_H(V \otimes H)$ defined by 
\begin{align*}
\restr{\nu}{H} = \Xi, \qquad \nu(v \otimes h) = \sigma(v) \otimes \Xi(h),
\end{align*}
where we recall that $\sfw$ is a $\sigma$-twisted superpotential. By Lemma \ref{integrallemma} (3), the trace $\Tr$ is invariant under $\nu$, as required.

\begin{lem} \label{twistedlem}
With the above setup, $\Psi(\sfw \hash 1)$ is a twisted weak potential, where the twist is given by $\nu$.
\end{lem}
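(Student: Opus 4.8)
The plan is to verify directly that $\Psi(\sfw \hash 1)$ satisfies the twisted weak potential condition $h \cdot \Psi(\sfw \hash 1) = \Psi(\sfw \hash 1) \cdot \nu(h)$ for all $h \in H$, using the definition of $\nu$ and the key identity from Theorem \ref{hdetthm} that $h \cdot \sfw = \hdet_A(h)\sfw$. First I would unwind what the two sides of the desired equation mean in $T_H(V \otimes H)$. Writing $\sfw = \sum_q \alpha_q v_{q_1} \otimes \dots \otimes v_{q_\ell}$, the element $\Psi(\sfw \hash 1)$ is $\sum_q \alpha_q (v_{q_1} \otimes 1) \otimes_H \dots \otimes_H (v_{q_{\ell-1}} \otimes 1) \otimes_H (v_{q_\ell} \otimes 1)$. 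The left $H$-action on $T_H(V \otimes H)$ is diagonal via the coproduct of $H$; since only the final tensorand carries a nontrivial right component (namely $1_H$), and since the tensors are over $H$, one can absorb factors of $H$ across the $\otimes_H$: a term $(h_{(i)} \cdot v \otimes h_{(i)}')$ in an interior slot can be rewritten by moving the $H$-part rightward. The upshot I expect is that $h \cdot \Psi(\sfw \hash 1)$ collapses to $\sum (\text{something}) \otimes_H \dots \otimes_H (\,\cdot\, \otimes h_{(\ell')}\,)$ with the $v$-parts acted on diagonally — i.e. it equals $\Psi\big((h \cdot \sfw) \hash (\text{leftover } H\text{-part})\big)$, where the leftover $H$-part comes from the rightmost Sweedler factor.

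The second step is to identify that leftover $H$-part. Using the smash product multiplication and the isomorphism $\Psi$, acting by $h$ on the left of $\Psi(\sfw \hash 1)$ should produce $\Psi\big( (h_{(1)} \cdot \sfw) \hash h_{(2)} \big)$ — the coproduct splits into a part acting on $\sfw$ and a part that survives on the right. By Theorem \ref{hdetthm}, $h_{(1)} \cdot \sfw = \hdet_A(h_{(1)})\sfw$, so this becomes $\Psi\big( \sfw \hash \hdet_A(h_{(1)}) h_{(2)} \big) = \Psi\big( \sfw \hash \Xi(h) \big)$, by the very definition of $\Xi$ as the left winding automorphism associated to $\hdet_A$.

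The third step is to match $\Psi(\sfw \hash \Xi(h))$ with $\Psi(\sfw \hash 1) \cdot \nu(h)$. Right multiplication by $h$ in $T_\Bbbk(V) \hash H$ corresponds, under $\Psi$, to right multiplication on the last $H$-tensorand; but to see it equals $\Psi(\sfw \hash 1) \cdot \nu(h)$ with $\nu$ as defined (acting by $\sigma$ on the $V$-slots and by $\Xi$ on the $H$-slots) I would compare slot by slot. The point is that $\nu$ restricted to $H$ is exactly $\Xi$, so $\Psi(\sfw \hash 1) \cdot \nu(h)$ multiplies the rightmost slot $(v_{q_\ell} \otimes 1)$ on the right by $\nu(h) = \Xi(h) \in H$, yielding $(v_{q_\ell} \otimes \Xi(h))$, which is precisely the last tensorand of $\Psi(\sfw \hash \Xi(h))$; the other slots are untouched because $\nu(h) \in H$ is a scalar with respect to the bimodule structure when $h \in H \subseteq T_H(V \otimes H)$. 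Hence both sides agree and $\Psi(\sfw \hash 1)$ is a $\nu$-twisted weak potential. (Note that the $\sigma$-part of $\nu$ plays no role here; it only enters later when verifying the full twisted superpotential condition.)

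I expect the main obstacle to be bookkeeping the tensor-over-$H$ manipulations carefully: showing rigorously that a diagonal left $H$-action on $\Psi(\sfw\hash 1) \in T_H(V\otimes H)$ can be rewritten so that all $H$-factors except the rightmost cancel against the relations $\otimes_H$, and that what emerges on the $V$-slots is genuinely the original (unpermuted) action $h_{(1)} \cdot \sfw$ rather than something twisted by $\sigma$ or reindexed. Once that identification is clean, the invocation of Theorem \ref{hdetthm} and the definition of $\Xi$ finish the argument immediately. One should also double-check that the coproduct identity $\Delta^{(\ell-1)}$ applied across $\ell$ slots reassembles correctly into $h_{(1)}\cdot\sfw \hash h_{(2)}$ — this uses coassociativity and the compatibility of the $H$-module algebra structure on $T_\Bbbk(V)$ with the smash product, which is routine but needs to be stated.
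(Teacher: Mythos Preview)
Your proposal is correct and follows essentially the same approach as the paper: compute $h \cdot \Psi(\sfw \hash 1)$ by propagating the $H$-action across the $\otimes_H$ tensorands to obtain $\Psi\big((h_{(1)} \cdot \sfw) \hash 1\big) h_{(2)}$, then apply Theorem~\ref{hdetthm} and the definition of $\Xi$ to recognise $\hdet(h_{(1)})h_{(2)} = \nu(h)$. The paper carries out the tensor-over-$H$ bookkeeping that you flag as the main concern explicitly, line by line, and like you observes that the $\sigma$-part of $\nu$ is irrelevant at this stage.
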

\begin{proof}
We need to show that 
\begin{align*}
h \Psi(\sfw \hash 1) = \Psi(\sfw \hash 1) \nu(h)
\end{align*}
for all $h \in H$. As before, write $\sfw = \sum_q \alpha_{q} v_{q_1} \otimes \dots \otimes v_{q_\ell}$. By Theorem \ref{hdetthm}, we have
\begin{align*}
\hdet(h) \sfw = h \cdot \sfw = \sum_q \alpha_{q} (h_{(1)} \cdot v_{i_1}) \otimes \dots \otimes (h_{(\ell)} \cdot v_{i_\ell}).
\end{align*}
Therefore,
\begin{align*}
h \Psi(\sfw \hash 1) &= h \sum_q \alpha_{q} (v_{q_1} \otimes 1) \otimes (v_{q_2} \otimes 1) \otimes \dots \otimes (v_{q_\ell} \otimes 1) \\
&= \sum_q \alpha_{q} \big(h \cdot (v_{q_1} \otimes 1)\big) \otimes (v_{q_2} \otimes 1) \otimes \dots \otimes (v_{q_\ell} \otimes 1) \\
&= \sum_q \alpha_{q} (h_{(1)} \cdot v_{q_1} \otimes h_{(2)}) \otimes (v_{q_2} \otimes 1) \otimes \dots \otimes (v_{q_\ell} \otimes 1) \\
&= \sum_q \alpha_{q} (h_{(1)} \cdot v_{q_1} \otimes 1) \cdot h_{(2)} \otimes (v_{q_2} \otimes 1) \otimes \dots \otimes (v_{q_\ell} \otimes 1) \\
&= \sum_q \alpha_{q} (h_{(1)} \cdot v_{q_1} \otimes 1) \otimes h_{(2)} \cdot (v_{q_2} \otimes 1) \otimes \dots \otimes (v_{q_\ell} \otimes 1) \\
& \hspace{20pt} \vdots \\
&= \sum_q \alpha_{q} (h_{(1)} \cdot v_{q_1} \otimes 1) \otimes (h_{(2)} \cdot v_{q_2} \otimes 1) \otimes \dots \otimes (h_{(\ell)} v_{q_\ell} \otimes 1) \cdot h_{(\ell+1)} \\
&= \Psi\Big( \sum_q \alpha_q (h_{(1)} \cdot v_{q_1}) \otimes \dots \otimes (h_{(\ell)} \cdot v_{q_\ell}) \hash 1 \Big) h_{(\ell+1)} \\
&= \Psi\big((h_{(1)} \cdot \sfw ) \hash 1\big) h_{(2)} \\
&= \Psi(\sfw \hash 1) \hdet(h_{(1)}) h_{(2)} \\
&= \Psi(\sfw \hash 1) \nu(h). \qedhere
\end{align*}
\end{proof}

The next step is to show that $\Psi(\sfw \hash 1)$ is a twisted superpotential; that is,
\begin{align*}
\llbracket \phi^{\nu^*}, \Psi(\sfw \hash 1) \rrbracket = \llbracket \Psi(\sfw \hash 1), \phi \rrbracket
\end{align*}
for all $\phi \in (V \otimes H)^*$. It suffices to show that this is true for all elements in a basis of $(V \otimes H)^*$. Recalling that $V$ has basis $\{ v_1, \dots, v_r\}$ and that $H$, by (\ref{awdecomp}), has a basis given by
\begin{align*}
\{ e_{ij}^{(k)} \mid 0 \leqslant k \leqslant n, \hspace{2pt} 1 \leqslant i,j \leqslant \dim V_k \},
\end{align*}
we obtain a basis $\{ v_p \otimes e_{ij}^{(k)} \}$ of $V \otimes H$. Taking $\{ \phi_{v_p \otimes e_{ij}^{(k)}} \}$ to be the corresponding dual basis of $(V \otimes H)^*$, it therefore suffices to show 
\begin{align*}
\Big\llbracket \phi_{v_p \otimes e_{ij}^{(k)}}^{\nu^*}, \Psi(\sfw \hash 1) \Big\rrbracket = \Big \llbracket \Psi(\sfw \hash 1), \phi_{v_p \otimes e_{ij}^{(k)}} \Big \rrbracket,
\end{align*}
for all $p,i,j,k$. \\
\indent As a first step, we need to determine formulae for the maps $R \phi_{v_p \otimes e_{ij}^{(k)}}^{\nu^*}$ and $L \phi_{v_p \otimes e_{ij}^{(k)}}$. For the rest of this section, if $\{ u_1, \dots, u_r \}$ is a basis for a vector space $U$ and $\{\phi_{u_1}, \dots,\phi_{u_r}\}$ is the corresponding dual basis, we will sometimes write $[u]_{u_i} \coloneqq \phi_{u_i}(u)$. (Roughly speaking, we use the $\phi$ notation when we are interested in properties of the maps, and the square bracket notation when we want to perform explicit calculations.)

\begin{prop} \label{RLprop}
With the setup as above, we have
\begin{enumerate}[{\normalfont (1)},leftmargin=*,topsep=0pt,itemsep=0pt]
\item $\displaystyle{R \phi_{v_p \otimes e_{ij}^{(k)}}^{\nu^*}(v_q \otimes 1) = \frac{\dim H}{\dim V_k} [\sigma(v_q)]_{v_p} \Xi^{-1}(e_{ji}^{(k)})}$;
\item $\displaystyle{L \phi_{v_p \otimes e_{ij}^{(k)}}(v_q \otimes 1) = \frac{\dim H}{\dim V_k} [S((e_{ji}^{(k)})_{(1)}) v_q]_{v_p} (e_{ji}^{(k)})_{(2)}}$.
\end{enumerate}
\end{prop}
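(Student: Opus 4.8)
The two formulae are computed by the same recipe: use the defining property of the natural isomorphisms $R$ and $L$ in \eqref{tracefunction}, namely $\Tr((R\psi)(m)) = \psi(m) = \Tr((L\psi)(m))$, together with the explicit trace values from Lemma \ref{tracelem}. Since the target $H$ has basis $\{e_{ij}^{(k)}\}$, an element $z \in H$ is determined by the scalars $\Tr(z \, e_{ab}^{(c)})$; by Lemma \ref{tracelem} these pick out, up to the factor $\dim V_k / \dim H$, the matrix entries of $z$. So in each case I would expand $(R\psi)(v_q \otimes 1)$ (respectively $(L\psi)(v_q \otimes 1)$) in the basis $\{e_{ij}^{(k)}\}$, compute the coefficient of each basis vector by pairing with the trace, and read off the answer.

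For part (1): by definition $\phi_{v_p \otimes e_{ij}^{(k)}}^{\nu^*} = \phi_{v_p \otimes e_{ij}^{(k)}} \circ \nu$, and $\nu(v_q \otimes 1) = \sigma(v_q) \otimes \Xi(1_H) = \sigma(v_q) \otimes 1_H$ (since $\Xi$ fixes $1_H$, as $\hdet$ is a counital algebra map). Hence for any $h \in H$,
\begin{align*}
\Tr\!\big((R\phi_{v_p \otimes e_{ij}^{(k)}}^{\nu^*})(v_q \otimes 1)\, h\big)
&= \Tr\!\big((R\phi_{v_p \otimes e_{ij}^{(k)}}^{\nu^*})((v_q \otimes 1) \cdot h)\big)
= \phi_{v_p \otimes e_{ij}^{(k)}}\big(\nu((v_q \otimes 1)\cdot h)\big),
\end{align*}
where I use that $R\psi$ is a right $H$-module map. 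Writing $(v_q \otimes 1) \cdot h = v_q \otimes h$ and $\nu(v_q \otimes h) = \sigma(v_q) \otimes \Xi(h)$, this equals $[\sigma(v_q)]_{v_p}\,[\Xi(h)]_{e_{ij}^{(k)}}$. Now specialising $h$ to run over the matrix-unit basis and comparing with $\Tr(z\,h)$ computed via Lemma \ref{tracelem} identifies $R\phi_{v_p \otimes e_{ij}^{(k)}}^{\nu^*}(v_q \otimes 1)$ as $\tfrac{\dim H}{\dim V_k}[\sigma(v_q)]_{v_p}$ times the unique element $z^{(k)}$ with $[\Xi(h)]_{e_{ij}^{(k)}} = \tfrac{\dim V_k}{\dim H}\Tr(z^{(k)} h)$ for all $h$; since $\Xi$ is an automorphism and $\Tr(e_{ab}^{(c)} h)$ reads off the $(b,a)$ entry of $h$ in block $c$ (again Lemma \ref{tracelem}), one checks $z^{(k)} = \Xi^{-1}(e_{ji}^{(k)})$. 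This gives (1).

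For part (2): now $L\phi_{v_p \otimes e_{ij}^{(k)}}$ is a \emph{left} $H$-module map, and here there is no $\nu$-twist, so one uses
\begin{align*}
\Tr\!\big(h \cdot (L\phi_{v_p \otimes e_{ij}^{(k)}})(v_q \otimes 1)\big) = \Tr\!\big((L\phi_{v_p \otimes e_{ij}^{(k)}})(h \cdot (v_q \otimes 1))\big) = \phi_{v_p \otimes e_{ij}^{(k)}}\big(h \cdot (v_q \otimes 1)\big),
\end{align*}
and $h \cdot (v_q \otimes 1) = (h_{(1)} \cdot v_q) \otimes h_{(2)}$ by the diagonal left $H$-action on $V \otimes H$. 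Expanding in the dual basis gives $[h_{(1)} \cdot v_q]_{v_p}\,[h_{(2)}]_{e_{ij}^{(k)}}$, and then pairing against the matrix-unit basis via Lemma \ref{tracelem} (together with the standard identity $\varepsilon(x)1_H = S(x_{(2)})x_{(1)}$ to rewrite $[h_{(2)}]_{e_{ij}^{(k)}}\,(h_{(1)} \cdot v_q)$ as $[S((e_{ji}^{(k)})_{(1)})v_q]_{v_p}$ applied to $(e_{ji}^{(k)})_{(2)}$) produces the stated formula. The main obstacle is purely bookkeeping: keeping the matrix-unit indices $i,j$ versus $j,i$ straight through the transpose implicit in Lemma \ref{tracelem}'s pairing, and correctly locating where the antipode $S$ and the comultiplication of $e_{ji}^{(k)}$ enter in (2) — there is no deep idea beyond the trace duality, but the index-chasing must be done carefully.
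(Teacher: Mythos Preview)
Your plan is essentially the paper's approach: use the nondegeneracy of $\Tr$ together with the defining property \eqref{tracefunction} and the explicit trace values from Lemma \ref{tracelem} to pin down each element of $H$ by pairing against the matrix-unit basis. For part (1) your argument matches the paper's almost line for line; the paper makes the verification that $z^{(k)} = \Xi^{-1}(e_{ji}^{(k)})$ explicit by invoking Lemma \ref{integrallemma} (3) (trace invariance under winding automorphisms), which is precisely the ``one checks'' step you gloss over.

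For part (2) there is a small but genuine gap. After reducing to
\[
[h_{(1)} \cdot v_q]_{v_p}\,[h_{(2)}]_{e_{ij}^{(k)}}
\;=\;
\frac{\dim H}{\dim V_k}\,[S((e_{ji}^{(k)})_{(1)}) v_q]_{v_p}\,\Tr\!\big(h\,(e_{ji}^{(k)})_{(2)}\big),
\]
the identity $\varepsilon(x)1_H = S(x_{(2)})x_{(1)}$ alone is not enough to close the argument. What is actually needed is Lemma \ref{integrallemma} (2),
\[
x_{(1)}\,\Tr(x_{(2)} y) \;=\; \Tr(x\,y_{(2)})\,S(y_{(1)}),
\]
which crucially uses that $\Tr$ is an \emph{integral} in $H^*$, not merely a nondegenerate trace. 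Applying this with $x = h$ and $y = e_{ji}^{(k)}$, then acting on $v_q$ and extracting the $v_p$-coefficient, gives exactly the required equality. So your strategy is correct, but you should replace the citation of the antipode identity with Lemma \ref{integrallemma} (2); otherwise the step does not go through.
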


\begin{rem} \label{Xiinverseremark}
Observe that the right hand side of both expressions depends on $e_{ji}^{(k)}$, rather than $e_{ij}^{(k)}$. We also note that the map $\Xi^{-1}$ is given by $\Xi^{-1}(h) = \hdet(S(h_{(1)})) h_{(2)}$.
\end{rem}

\begin{proof}[Proof of Proposition \ref{RLprop}]
\begin{enumerate}[{\normalfont (1)},wide=0pt,topsep=0pt,itemsep=0pt]
\item Since $\Tr$ is non-degenerate, it suffices to show that we have an equality
\begin{align}
\Tr \Big( R \phi_{v_p \otimes e_{ij}^{(k)}}^{\nu^*}(v_q \otimes 1) e_{rs}^{(t)} \Big) = \Tr \left( \frac{\dim H}{\dim V_k} [\sigma(v_q)]_{v_p} \Xi^{-1}(e_{ji}^{(k)}) e_{rs}^{(t)} \right) \label{Rclaimedequality}
\end{align}
for all $r,s,t$. The left hand side of (\ref{Rclaimedequality}) simplifies as
\begin{align*}
\Tr \Big( R \phi_{v_p \otimes e_{ij}^{(k)}}^{\nu^*}(v_q \otimes 1) e_{rs}^{(t)} \Big) 
&= \Tr \Big( R \phi_{v_p \otimes e_{ij}^{(k)}}^{\nu^*}(v_q \otimes e_{rs}^{(t)}) \Big) \\
&= \phi_{v_p \otimes e_{ij}^{(k)}}^{\nu^*} ( v_q \otimes e_{rs}^{(t)}) \\
&= \phi_{v_p \otimes e_{ij}^{(k)}} (\sigma(v_q) \otimes \Xi(e_{rs}^{(t)})) \\
&= [\sigma(v_q)]_{v_p} [\Xi(e_{rs}^{(t)})]_{e_{ij}^{(k)}}.
\end{align*}
If we instead consider the right hand side of (\ref{Rclaimedequality}), we obtain
\begin{align}
\Tr \left( \frac{\dim H}{\dim V_k} [\sigma(v_q)]_{v_p} \Xi^{-1}(e_{ji}^{(k)}) e_{rs}^{(t)} \right)
&= \frac{\dim H}{\dim V_k} [\sigma(v_q)]_{v_p} \Tr\big( \Xi^{-1}(e_{ji}^{(k)}) e_{rs}^{(t)} \big) \nonumber \\
&= \frac{\dim H}{\dim V_k} [\sigma(v_q)]_{v_p} \Tr\big( \Xi^{-1}\big(e_{ji}^{(k)} \Xi(e_{rs}^{(t)})\big) \big) \nonumber \\
&= \frac{\dim H}{\dim V_k} [\sigma(v_q)]_{v_p} \Tr\big( e_{ji}^{(k)} \Xi(e_{rs}^{(t)}) \big) \label{Requality1}\\
&= \frac{\dim H}{\dim V_k} [\sigma(v_q)]_{v_p}  \frac{\dim V_k}{\dim H} [\Xi(e_{rs}^{(t)})]_{e_{ij}^{(k)}} \label{Requality2} \\
&= [\sigma(v_q)]_{v_p} [\Xi(e_{rs}^{(t)})]_{e_{ij}^{(k)}} \nonumber,
\end{align}
where we use Lemma \ref{integrallemma} (3) to establish (\ref{Requality1}), and Lemma \ref{tracelem} at (\ref{Requality2}). This shows that (\ref{Rclaimedequality}) holds, and so the result follows.
\item As with (1), it suffices to show that we have an equality 
\begin{align}
\Tr \Big(e_{rs}^{(t)} L \phi_{v_p \otimes e_{ij}^{(k)}}(v_q \otimes 1) \Big) = \Tr \left(  e_{rs}^{(t)} \frac{\dim H}{\dim V_k} [S((e_{ji}^{(k)})_{(1)}) v_q]_{v_p} (e_{ji}^{(k)})_{(2)} \right) \label{Lclaimedequality}
\end{align}
for all $r,s,t$. We first simplify the left hand side of this expression:
\begin{align*}
\Tr \Big(e_{rs}^{(t)} L \phi_{v_p \otimes e_{ij}^{(k)}}(v_q \otimes 1) \Big) 
&= \Tr \Big(L \phi_{v_p \otimes e_{ij}^{(k)}}\Big(e_{rs}^{(t)}(v_q \otimes 1) \Big) \Big) \\
&= \Tr \Big(L \phi_{v_p \otimes e_{ij}^{(k)}}\Big( (e_{rs}^{(t)})_{(1)} v_q \otimes (e_{rs}^{(t)})_{(2)} \Big) \Big) \\
&= \phi_{v_p \otimes e_{ij}^{(k)}}\Big( (e_{rs}^{(t)})_{(1)} v_q \otimes (e_{rs}^{(t)})_{(2)} \Big) \\
&= [(e_{rs}^{(t)})_{(1)} v_q]_{v_p} [(e_{rs}^{(t)})_{(2)}]_{e_{ij}^{(k)}}.
\end{align*}
On the other hand, if we set $x = e_{rs}^{(t)}$ and $y = e_{ji}^{(k)}$, the right hand side of (\ref{Lclaimedequality}) becomes
\begin{align}
\Tr \left(  e_{rs}^{(t)} \frac{\dim H}{\dim V_k} [S((e_{ji}^{(k)})_{(1)}) v_q]_{v_p} (e_{ji}^{(k)})_{(2)} \right)
&= \Tr \left(  x \frac{\dim H}{\dim V_k} [S(y_{(1)}) v_q]_{v_p} y_{(2)} \right) \nonumber \\
&= \frac{\dim H}{\dim V_k} [S(y_{(1)}) v_q]_{v_p} \Tr (x y_{(2)}) \nonumber \\
&= \frac{\dim H}{\dim V_k} [\Tr (x y_{(2)}) S(y_{(1)}) v_q]_{v_p} \nonumber \\
&= \frac{\dim H}{\dim V_k} [x_{(1)} \Tr (x_{(2)} y) v_q]_{v_p} \label{Lequality1} \\
&= \frac{\dim H}{\dim V_k} [x_{(1)} v_q]_{v_p} \Tr (x_{(2)} y) \nonumber \\
&= \frac{\dim H}{\dim V_k} [(e_{rs}^{(t)})_{(1)} v_q]_{v_p} \Tr\big( (e_{rs}^{(t)})_{(2)} e_{ji}^{(k)}\big) \nonumber\\
&= \frac{\dim H}{\dim V_k} [(e_{rs}^{(t)})_{(1)} v_q]_{v_p} \frac{\dim V_k}{\dim H} [(e_{rs}^{(t)})_{(2)}]_{e_{ij}^{(k)}} \label{Lequality2} \\
&= [(e_{rs}^{(t)})_{(1)} v_q]_{v_p} [(e_{rs}^{(t)})_{(2)}]_{e_{ij}^{(k)}}, \nonumber
\end{align}
where we use Lemma \ref{integrallemma} (2) at (\ref{Lequality1}), and Lemma \ref{tracelem} at (\ref{Lequality2}). Therefore (\ref{Lclaimedequality}) holds, as required. \qedhere
\end{enumerate}
\end{proof}

Before using the above result to show that $\Psi(\sfw \hash 1)$ is a twisted superpotential, we make an observation regarding a calculation in $T_H(V \otimes H)$. If $h \in H$ and $v \in V$, then
\begin{align}
h_{(2)}(S(h_{(1)})v \otimes 1) &= (h_{(2)} S(h_{(1)})v) \otimes h_{(3)} = \varepsilon(h_{(1)})v \otimes h_{(2)} = v \otimes \varepsilon(h_{(1)})h_{(2)} =(v \otimes 1) h, \label{needHSS}
\end{align}
where the second equality crucially requires $S^2 = \id$, which is the case since we assume $H$ to be semisimple. \\
\indent With this observation in hand, we are now in a position to show that $\Psi(\sfw \hash 1)$ is a twisted superpotential.

\begin{prop}
$\Psi(\sfw \hash 1) \in T_H(V \otimes H)$ is a $\nu$-twisted superpotential, where the twist is given by $\nu = \sigma \otimes \Xi$; that is,
\begin{align*}
\llbracket \phi^{\nu^*},\Psi(\sfw \hash 1) \rrbracket = \llbracket \Psi(\sfw \hash 1), \phi \rrbracket 
\end{align*}
for all $\phi \in (V \otimes H)^*$.
\end{prop}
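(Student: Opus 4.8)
By Lemma \ref{twistedlem} we already know that $\Psi(\sfw \hash 1)$ is a $\nu$-twisted weak potential, so it remains to verify the displayed identity; the plan is to reduce it to a finite check and then to a statement purely about how $\hdet$ and the antipode interact. Since $\{v_p \otimes e_{ij}^{(k)}\}$ is a basis of $V \otimes H$, it suffices to verify $\llbracket \phi^{\nu^*}, \Psi(\sfw \hash 1) \rrbracket = \llbracket \Psi(\sfw \hash 1), \phi \rrbracket$ for the dual basis elements $\phi = \phi_{v_p \otimes e_{ij}^{(k)}}$. Write $\sfw = \sum_q \alpha_q\, v_{q_1} \otimes \dots \otimes v_{q_\ell}$. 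Expanding the left-hand side with the extended left pairing (which contracts the first tensor factor) and Proposition \ref{RLprop}(1) yields $\tfrac{\dim H}{\dim V_k}\,\Xi^{-1}(e_{ji}^{(k)}) \cdot \Psi\big( \llbracket \phi_p^{\sigma^*}, \sfw \rrbracket \hash 1 \big)$, where on the $V$-side $\llbracket-,-\rrbracket$ denotes the $H = \Bbbk$ pairing of Definition \ref{derquodef1}. Expanding the right-hand side with the extended right pairing (which contracts the last tensor factor) and Proposition \ref{RLprop}(2) yields $\tfrac{\dim H}{\dim V_k}\,\Psi\big( \llbracket \sfw, \phi_p \circ S((e_{ji}^{(k)})_{(1)}) \rrbracket \hash (e_{ji}^{(k)})_{(2)} \big)$, where $\phi_p \circ S((e_{ji}^{(k)})_{(1)})$ is the functional $v \mapsto \phi_p\big(S((e_{ji}^{(k)})_{(1)})v\big)$.

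Next I would reconcile the two expressions. The $\sigma$-twisted superpotential relation for $\sfw$, namely $\llbracket \phi^{\sigma^*}, \sfw \rrbracket = \llbracket \sfw, \phi \rrbracket$ for all $\phi \in V^*$, lets me rewrite $\llbracket \phi_p^{\sigma^*}, \sfw \rrbracket$ as $\llbracket \sfw, \phi_p \rrbracket$ on the left-hand side, and likewise brings the right-hand side into a comparable form. I would then convert the right $H$-multiplication appearing on the right-hand side into a left $H$-action by iterating (\ref{needHSS}): for $w_1, \dots, w_s \in V$ and $h \in H$,
\[
(w_1 \otimes 1) \otimes \dots \otimes (w_s \otimes 1) \cdot h = h_{(s+1)} \cdot \big[ (S(h_{(s)}) w_1 \otimes 1) \otimes \dots \otimes (S(h_{(1)}) w_s \otimes 1) \big],
\]
which is where semisimplicity of $H$ (equivalently $S^2 = \id_H$) is essential. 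After this step both sides are written as a single left $H$-action applied to a tensor word in $V \otimes H$, with all of the $e_{ji}^{(k)}$-dependence packaged into values of $\hdet$ on Sweedler components of $\Xi^{\pm 1}$ together with antipodes of Sweedler components acting in the $V$-slots.

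What remains is an identity involving only $\hdet$, the winding automorphism $\Xi$, the antipode, and the $H$-action on $\sfw$. I would deduce it from Theorem \ref{hdetthm} in the form $h \cdot \sfw = \hdet(h)\sfw$, together with the facts (Remark \ref{Xiinverseremark}) that $\Xi(h) = \hdet(h_{(1)}) h_{(2)}$ has inverse $\Xi^{-1}(h) = \hdet(S(h_{(1)})) h_{(2)}$, that $\hdet$ is an algebra homomorphism, and that $\hdet\big(S(h_{(1)}) h_{(2)}\big) = \varepsilon(h)\,\hdet(1_H)$; these cause the homological-determinant factors to cancel and the two sides to match slot by slot. I expect the main obstacle to be precisely this Sweedler-notation bookkeeping in the last two steps: tracking, through all $\ell - 1$ tensor factors, which Sweedler component of $e_{ji}^{(k)}$ ends up where once the right action has been converted to a left action, and verifying that the resulting $\hdet$ factors cancel exactly. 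Because a general semisimple Hopf algebra has no distinguished basis in which the coproduct is simple, one cannot argue componentwise as in the group-algebra case treated in \cite{wuskew}; the computation must be carried out abstractly, using only $S^2 = \id_H$, the antipode and counit axioms, Theorem \ref{hdetthm}, and the $\sigma$-twisted superpotential relation for $\sfw$.
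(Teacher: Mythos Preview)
Your proposal is correct and follows essentially the same route as the paper's proof: reduce to dual basis elements, apply Proposition~\ref{RLprop} to both sides, invoke the $\sigma$-twisted superpotential identity for $\sfw$, and then use the iterated form of (\ref{needHSS}) together with Theorem~\ref{hdetthm} and the explicit formula for $\Xi^{-1}$ to match the two expressions. The only cosmetic difference is directional: the paper starts from the left-hand side and transforms it step by step into the right-hand side (expanding $\Xi^{-1}(e_{ji}^{(k)})$, replacing $\hdet(S(h_{(1)}))\sfw$ by $S(h_{(1)})\cdot\sfw$ via Theorem~\ref{hdetthm}, and then repeatedly applying (\ref{needHSS}) to convert the resulting left $H$-action into a right multiplication), whereas you compute both sides separately and meet in the middle by running (\ref{needHSS}) in the opposite direction; the underlying Sweedler manipulation is identical.
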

\begin{proof}
Clearly it suffices to verify this equality as $\phi$ varies over elements in our chosen basis for $(V \otimes H)^*$. We write $\{ \phi_{v_1}, \dots, \phi_{v_r}\}$ for the basis dual to our chosen basis for $V$. We remind the reader that
\begin{align*}
\sfw = \sum_{q} \alpha_q v_{q_1} \otimes \dots \otimes v_{q_\ell}.
\end{align*}
Now, for any $p,i,j,k$ we have
\begin{align}
\Big \llbracket & \phi_{v_p \otimes e_{ij}^{(k)}}^{\nu^*}, \Psi(\sfw \hash 1) \Big \rrbracket \nonumber \\
&= \sum_q \alpha_q \Big(R \phi_{v_p \otimes e_{ij}^{(k)}}^{\nu^*}\Big)(v_{q_1} \otimes 1) \cdot  (v_{q_2} \otimes 1) \otimes \dots \otimes (v_{q_\ell} \otimes 1) \nonumber \\
&= \sum_q \alpha_q \frac{\dim H}{\dim V_k} [\sigma(v_{q_1})]_{v_p} \Xi^{-1}(e_{ji}^{(k)}) \cdot  (v_{q_2} \otimes 1) \otimes \dots \otimes (v_{q_\ell} \otimes 1) \nonumber \\
&= \frac{\dim H}{\dim V_k} \Xi^{-1}(e_{ji}^{(k)}) \sum_q \alpha_q [\sigma(v_{q_1})]_{v_p} (v_{q_2} \otimes 1) \otimes \dots \otimes (v_{q_\ell} \otimes 1) \nonumber \\
&= \frac{\dim H}{\dim V_k} \Xi^{-1}(e_{ji}^{(k)}) \Psi \bigg( \sum_q \alpha_q [\sigma(v_{q_1})]_{v_p} (v_{q_2} \otimes \dots \otimes v_{q_\ell}) \hash 1 \bigg) \nonumber \\
&= \frac{\dim H}{\dim V_k} \Xi^{-1}(e_{ji}^{(k)}) \Psi \big( \llbracket \phi_{v_p}^{\sigma^*},\sfw \rrbracket \hash 1 \big) \nonumber \\
&= \frac{\dim H}{\dim V_k} \hdet\big(S((e_{ji}^{(k)})_{(1)})\big) (e_{ji}^{(k)})_{(2)} \Psi \big( \llbracket \sfw,\phi_{v_p} \rrbracket \hash 1 \big) \nonumber \\
&= \frac{\dim H}{\dim V_k} (e_{ji}^{(k)})_{(2)} \Psi \Big( \big \llbracket \hdet\big(S((e_{ji}^{(k)})_{(1)})\big) \sfw,\phi_{v_p}\big \rrbracket \hash 1 \Big) \nonumber \\
&= \frac{\dim H}{\dim V_k} (e_{ji}^{(k)})_{(2)} \Psi \Big( \big \llbracket S((e_{ji}^{(k)})_{(1)}) \sfw,\phi_{v_p}\big \rrbracket \hash 1 \Big) \label{usinghdetlem} \\
&= \frac{\dim H}{\dim V_k} (e_{ji}^{(k)})_{(\ell+1)} \Psi \Bigg( \Big \llbracket \sum_q \alpha_{q} S((e_{ji}^{(k)})_{(\ell)}) v_{q_1} \otimes S((e_{ji}^{(k)})_{(\ell-1)}) v_{q_2} \otimes \dots \otimes S((e_{ji}^{(k)})_{(1)}) v_{q_\ell}, \phi_{v_p}\Big \rrbracket \hash 1 \Bigg) \nonumber \\
&= \frac{\dim H}{\dim V_k} (e_{ji}^{(k)})_{(\ell+1)} \sum_q \alpha_{q} \Big( S((e_{ji}^{(k)})_{(\ell)}) v_{q_1} \otimes 1 \Big) \otimes \dots \otimes \Big( S((e_{ji}^{(k)})_{(2)}) v_{q_{\ell-1}} \otimes 1 \Big) \big[ S((e_{ji}^{(k)})_{(1)}) v_{q_\ell} \big]_{v_p} \nonumber \\
&= \frac{\dim H}{\dim V_k} \sum_q \alpha_{q} \Big( (e_{ji}^{(k)})_{(\ell+1)} \big( S((e_{ji}^{(k)})_{(\ell)}) v_{q_1} \otimes 1 \big) \Big) \otimes \dots \otimes \Big( S((e_{ji}^{(k)})_{(2)}) v_{q_{\ell-1}} \otimes 1 \Big) \big[ S((e_{ji}^{(k)})_{(1)}) v_{q_\ell} \big]_{v_p} \nonumber \\
&= \frac{\dim H}{\dim V_k} \sum_q \alpha_{q} \Big( \big( v_{q_1} \otimes 1 \big) (e_{ji}^{(k)})_{(\ell)} \Big) \otimes \dots \otimes \Big( S((e_{ji}^{(k)})_{(2)}) v_{q_{\ell-1}} \otimes 1 \Big) \big[ S((e_{ji}^{(k)})_{(1)}) v_{q_\ell} \big]_{v_p} \nonumber \\
&= \frac{\dim H}{\dim V_k} \sum_q \alpha_{q} \big( v_{q_1} \otimes 1 \big) \otimes \Big( \big(  v_{q_2} \otimes 1 \big) (e_{ji}^{(k)})_{(\ell-1)} \Big) \otimes \dots \otimes \Big( S((e_{ji}^{(k)})_{(2)}) v_{q_{\ell-1}} \otimes 1 \Big) \big[ S((e_{ji}^{(k)})_{(1)}) v_{q_\ell} \big]_{v_p} \nonumber \\
& \hspace{10pt} \vdots \nonumber \\
&= \frac{\dim H}{\dim V_k} \sum_q \alpha_{q} \big( v_{q_1} \otimes 1 \big) \otimes \dots \otimes \big( v_{q_{\ell-1}} \otimes 1 \big) (e_{ji}^{(k)})_{(2)} \big[ S((e_{ji}^{(k)})_{(1)}) v_{q_\ell} \big]_{v_p} \nonumber \\
&= \sum_q \alpha_{q} \big( v_{q_1} \otimes 1 \big) \otimes \dots \otimes \big( v_{q_{\ell-1}} \otimes 1 \big) \cdot \frac{\dim H}{\dim V_k} \big[ S((e_{ji}^{(k)})_{(1)}) v_{q_\ell} \big]_{v_p} (e_{ji}^{(k)})_{(2)} \nonumber \\
&= \sum_q \alpha_{q} \big( v_{q_1} \otimes 1 \big) \otimes \dots \otimes \big( v_{q_{\ell-1}} \otimes 1 \big) \cdot \Big(L \phi_{v_p \otimes e_{ij}^{(k)}}\Big)(v_{q_\ell} \otimes 1) \nonumber \\
&= \llbracket \Psi(\sfw \hash 1), \phi_{v_p \otimes e_{ij}^{(k)}} \rrbracket, \nonumber
\end{align}
where we make use of the formulae obtained in Proposition \ref{RLprop}, where the equality at (\ref{usinghdetlem}) follows from Theorem \ref{hdetthm}, and where we have made use of the observation (\ref{needHSS}) multiple times. This establishes the claimed identity on a basis for $(V \otimes H)^*$, and hence for all $\phi$ in this space.
\end{proof}

The last step is to show that the ideal of relations defining $A \hash H$ is derived from the superpotential $\Psi(\sfw \hash 1)$. To be more precise, first note that if $I$ is an ideal in $T_\Bbbk(V)$, then 
\begin{align*}
I \hash H = \langle \Psi(I \otimes H) \rangle,
\end{align*}
and therefore
\begin{align*}
A \hash H = \frac{T_\Bbbk(V)}{\langle \im \partial^{\ell-m}_{\sfw} \rangle} \hash H \cong \frac{T_\Bbbk(V) \hash H}{\langle \im \partial^{\ell-m}_{\sfw} \rangle \hash H}
\cong \frac{T_H(V \otimes H)}{\langle \Psi(\im \partial^{\ell-m}_\sfw \hash H) \rangle}.
\end{align*}
So to show that we have an isomorphism $A \hash H \cong \scrD(\Psi(\sfw \hash 1),\ell-m)$, it remains to show that we have an equality $\Psi(\im \partial^{\ell-m}_\sfw \hash H) = \im \partial^{\ell-m}_{\Psi(\sfw \hash 1)}$.

\begin{lem}
There is an equality
\begin{align*}
\Psi(\im \partial^{\ell-m}_\sfw \hash H) = \im \partial^{\ell-m}_{\Psi(\sfw \hash 1)}.
\end{align*}
\end{lem}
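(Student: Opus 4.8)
The claim is an equality of two subspaces of the degree-$m$ component $T_H(V\otimes H)_m = (V\otimes H)^{\otimes_H m}$, so the plan is to describe both subspaces concretely and then prove the two inclusions by hand. Write $\sfw = \sum_q\alpha_q v_{q_1}\otimes\dots\otimes v_{q_\ell}$, put $\sfw' := \Psi(\sfw\hash 1) = \sum_q\alpha_q(v_{q_1}\otimes 1)\otimes\dots\otimes(v_{q_\ell}\otimes 1)$, and let $\mathcal{R} := \im\partial^{\ell-m}_\sfw = \partial^{\ell-m}(\sfw)\subseteq V^{\otimes m}$ be the defining relation space of $A$; since $A$ is an $H$-module algebra, $\mathcal{R}$ is an $H$-submodule of $V^{\otimes m}$. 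First I would fix the canonical $\Bbbk$-linear identifications $(V\otimes H)^{\otimes_H k}\cong V^{\otimes k}\otimes H$ sending $(v_1\otimes 1)\otimes_H\dots\otimes_H(v_k\otimes h)$ to $v_1\otimes\dots\otimes v_k\otimes h$; under these, $\Psi$ is the identity in each degree, so $\Psi(\im\partial^{\ell-m}_\sfw\hash H) = \mathcal{R}\otimes H$, and $\mathcal{R}\otimes H$ is an $H$-sub-bimodule of $V^{\otimes m}\otimes H$ (the left action being $h\cdot(r\otimes k) = h_{(1)}\cdot r\otimes h_{(2)}k$, which lands in $\mathcal{R}\otimes H$ because $\mathcal{R}$ is $H$-stable). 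Since $m\geqslant 1$, the last slot of $\sfw'$ is untouched by an $(\ell-m)$-fold left contraction, so $\llbracket\psi_1\otimes\dots\otimes\psi_{\ell-m},\sfw' h\rrbracket = \llbracket\psi_1\otimes\dots\otimes\psi_{\ell-m},\sfw'\rrbracket\cdot h$; hence $\im\partial^{\ell-m}_{\sfw'} = \mathcal{S}\cdot H$, where $\mathcal{S}$ is the $\Bbbk$-span of all $\llbracket\psi_1\otimes\dots\otimes\psi_{\ell-m},\sfw'\rrbracket$ with $\psi_j\in(V\otimes H)^*$. It therefore suffices to prove $\mathcal{R}\otimes 1\subseteq\mathcal{S}$ and $\mathcal{S}\subseteq\mathcal{R}\otimes H$, since together with the previous sentence these give $\mathcal{R}\otimes H = (\mathcal{R}\otimes 1)\cdot H\subseteq\mathcal{S}\cdot H\subseteq(\mathcal{R}\otimes H)\cdot H = \mathcal{R}\otimes H$.

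For $\mathcal{R}\otimes 1\subseteq\mathcal{S}$, I would introduce the ``trivial'' dual vectors $\psi^\flat_v\in(V\otimes H)^*$ defined by $\psi^\flat_v(v'\otimes h) := [v']_v\Tr(h)$; by Lemma \ref{tracelem} one has $\psi^\flat_{v_p} = \sum_k\tfrac{\dim V_k}{\dim H}\sum_i\phi_{v_p\otimes e^{(k)}_{ii}}$, and $R\psi^\flat_{v_p}$ is the right $H$-module map $v_q\otimes h\mapsto\delta_{pq}h$, so $\llbracket\psi^\flat_{v_p},v_q\otimes 1\rrbracket = \delta_{pq}1_H$. A direct computation — each of the $\ell-m$ nested pairings contributes only a Kronecker delta and a factor $1_H$, which acts trivially — then gives
\begin{align*}
\llbracket\psi^\flat_{v_{p_1}}\otimes\dots\otimes\psi^\flat_{v_{p_{\ell-m}}},\sfw'\rrbracket = \llbracket\phi_{v_{p_1}}\otimes\dots\otimes\phi_{v_{p_{\ell-m}}},\sfw\rrbracket\otimes 1 .
\end{align*}
As the classical contractions on the right-hand side span $\mathcal{R} = \partial^{\ell-m}(\sfw)$, this yields $\mathcal{R}\otimes 1\subseteq\mathcal{S}$ (and, in fact, $\mathcal{R}\otimes H\subseteq\im\partial^{\ell-m}_{\sfw'}$ outright, since $\llbracket\psi^\flat_{v_{p_1}}\otimes\dots\otimes\psi^\flat_{v_{p_{\ell-m}}},\sfw' h\rrbracket = (r\otimes 1)\cdot h = r\otimes h$).

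For $\mathcal{S}\subseteq\mathcal{R}\otimes H$, by multilinearity it is enough to treat $\psi_a = \phi_{v_{p_a}\otimes e^{(k_a)}_{i_at_a}}$. The trace computation of Proposition \ref{RLprop}(1), specialised to trivial twist, gives $R\phi_{v_p\otimes e^{(k)}_{it}}(v_q\otimes 1) = \tfrac{\dim H}{\dim V_k}\delta_{pq}e^{(k)}_{ti}$. Expanding the iterated pairing $\llbracket\psi_1\otimes\dots\otimes\psi_{\ell-m},\sfw'\rrbracket$ one step at a time, each step contracts some $\psi_a$ against a slot $(v\otimes 1)$, producing a scalar and the matrix unit $e^{(k_a)}_{t_ai_a}$; one Sweedler leg of this unit is retained as a left $H$-coefficient, and its remaining legs are pushed through the later slots by the diagonal left action and the right $H$-linearity of the $R\psi_b$ (equivalently, by the identity (\ref{needHSS})), their effect being to replace the later $\phi_{v_{p_b}}$ by functionals in $V^*$ twisted by those legs. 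Collecting terms, $\llbracket\psi_1\otimes\dots\otimes\psi_{\ell-m},\sfw'\rrbracket$ becomes a finite sum $\big(\prod_a\tfrac{\dim H}{\dim V_{k_a}}\big)\sum_\beta h_\beta\cdot(r_\beta\otimes 1)$ with $h_\beta\in H$ and each $r_\beta$ a contraction of $\sfw$ by genuine elements of $V^*$, hence $r_\beta\in\partial^{\ell-m}(\sfw) = \mathcal{R}$. Since $\mathcal{R}\otimes H$ is an $H$-sub-bimodule, $h_\beta\cdot(r_\beta\otimes 1)\in\mathcal{R}\otimes H$, so $\llbracket\psi_1\otimes\dots\otimes\psi_{\ell-m},\sfw'\rrbracket\in\mathcal{R}\otimes H$, which establishes the second inclusion and finishes the proof. (When $\ell-m = 0$ the tuples are empty, $\mathcal{S} = \Bbbk\sfw' = \Bbbk(\sfw\otimes1) = \mathcal{R}\otimes 1$, and both inclusions are immediate.)

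I expect the main obstacle to be the inclusion $\mathcal{S}\subseteq\mathcal{R}\otimes H$: the coproducts of the matrix units $e^{(k)}_{ij}$ admit no simple closed form, so one must track carefully how their Sweedler legs interleave across the successive contraction steps. The feature that makes the bookkeeping manageable is that the decoupling of these legs only needs to be performed modulo $\mathcal{R}\otimes H$ — because $\mathcal{R}$ already absorbs every twisted contraction of $\sfw$ and $\mathcal{R}\otimes H$ is an $H$-sub-bimodule of $V^{\otimes m}\otimes H$ — rather than on the nose; this is also where the hypothesis $S^2 = \id$ (i.e.\ semisimplicity of $H$) re-enters the argument, through (\ref{needHSS}).
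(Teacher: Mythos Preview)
Your proposal is correct and follows essentially the same approach as the paper's proof. Both arguments prove the two inclusions via the same pair of ideas: for $\Psi(\im\partial^{\ell-m}_\sfw\hash H)\subseteq\im\partial^{\ell-m}_{\Psi(\sfw\hash 1)}$, one uses ``trivial'' functionals whose $R$-image sends $v_q\otimes 1\mapsto\delta_{pq}1_H$ (your $\psi^\flat_{v_p}$; the paper's $\phi_{v_p\otimes 1}$ with respect to a basis $\{h_1=1,h_2,\dots\}$ satisfying $\Tr(h_i)=0$ for $i\geqslant 2$ --- these are the same functional), so that the iterated contraction collapses to the classical one; for the reverse inclusion, one uses the matrix-unit dual basis, notes that each $R\phi_{v_p\otimes e^{(k)}_{ij}}(v_q\otimes 1)$ is a scalar times a matrix unit, and then recognises the result as a left $H$-translate of an element of $\mathcal{R}\otimes H$, using the $H$-stability of $\mathcal{R}$.

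The only real difference is in presentation of the second inclusion. The paper collects the successive matrix units into a single element $x\in H$ and then explicitly moves $x$ across the remaining tensor factors to obtain $\Psi(x_{(1)}\cdot r\hash x_{(2)}h)$ with $r\in\im\partial^{\ell-m}_\sfw$; your description in terms of ``Sweedler legs pushed through'' is the same manoeuvre, stated more informally. Your framing via the identification $(V\otimes H)^{\otimes_H m}\cong V^{\otimes m}\otimes H$ and the observation that $\mathcal{R}\otimes H$ is an $H$-sub-bimodule is a clean way to package the argument, but the underlying computation is identical.
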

\begin{proof}
($\supseteq$) Set $s = \ell-m$. Recall that $V$ and $H$ have respective $\Bbbk$-bases $\{ v_1, \dots, v_r \}$ and $\{ e_{ij}^{(k)} \}$, so that $(V \otimes H)^*$ has basis $\{ \phi_{v_p \otimes e_{ij}^{(k)}} \}$ dual to that of $V \otimes H$. To establish the claimed inclusion, it suffices to show that any element of the form 
\begin{align*}
\partial^{s}_{\Psi(\sfw \hash 1)}\Big( \phi_{v_{p_1} \otimes e_{i_1 j_1}^{(k_1)}} \otimes \dots \otimes \phi_{v_{p_s} \otimes e_{i_s j_s}^{(k_s)}} \otimes h \Big)
\end{align*}
lies in $\Psi(\im \partial^{s}_\sfw \hash H)$. \\
\indent Using the same method as in the proof of Proposition \ref{RLprop}, one can show that
\begin{align*}
(R \phi_{v_{p} \otimes e_{i j}^{(k)}})(v_q \otimes 1) = \frac{\dim H}{\dim V_k} [v_q]_{v_p} e_{ji}^{(k)}.
\end{align*}
Therefore
\begin{align*}
\partial^{s}_{\Psi(\sfw \hash 1)}\Big( &\phi_{v_{p_1} \otimes e_{i_1 j_1}^{(k_1)}} \otimes \dots \otimes \phi_{v_{p_s} \otimes e_{i_s j_s}^{(k_s)}} \otimes h \Big) \\
&= \sum_q \alpha_q (R \phi_{v_{p_s} \otimes e_{i_s j_s}^{(k_s)}})(v_{q_1} \otimes 1) \dots (R \phi_{v_{p_1} \otimes e_{i_1 j_1}^{(k_1)}})(v_{q_s} \otimes 1) (v_{q_{s+1}} \otimes 1) \otimes \dots \otimes (v_{q_\ell} \otimes 1) \\
&= \frac{(\dim H)^{s}}{\dim V_{k_1} \dots \dim V_{k_s}} \sum_q \alpha_q [v_{q_1}]_{v_{p_s}} \dots [v_{q_s}]_{v_{p_1}} \underbrace{e_{j_s i_s}^{k_s} \dots e_{j_1 i_1}^{k_1}}_{\eqqcolon x \in H} (v_{q_{s+1}} \otimes 1) \otimes \dots \otimes (v_{q_{\ell}} \otimes h) \\
&= \frac{(\dim H)^{s}}{\dim V_{k_1} \dots \dim V_{k_s}} \sum_q \alpha_q [v_{q_1}]_{v_{p_s}} \dots [v_{q_s}]_{v_{p_1}}  (x_{(1)} v_{q_{s+1}} \otimes 1) \otimes \dots \otimes (x_{(\ell-s)} v_{q_{\ell}} \otimes x_{(\ell-s+1)} h) \\
&= \frac{(\dim H)^{s}}{\dim V_{k_1} \dots \dim V_{k_s}} \Psi \bigg( \sum_q \alpha_q [v_{q_1}]_{v_{p_s}} \dots [v_{q_s}]_{v_{p_1}} (x_{(1)} v_{q_{s+1}}) \otimes \dots \otimes (x_{(\ell-s)} v_{q_{\ell}}) \hash x_{(\ell-s+1)} h \bigg) \\
&= \frac{(\dim H)^{s}}{\dim V_{k_1} \dots \dim V_{k_s}} \Psi \bigg( x_{(1)} \bigg(\sum_q \alpha_q [v_{q_1}]_{v_{p_s}} \dots [v_{q_s}]_{v_{p_1}}  v_{q_{s+1}} \otimes \dots \otimes v_{q_{\ell}} \bigg) \hash x_{(2)} h \bigg) \\
&= \frac{(\dim H)^{s}}{\dim V_{k_1} \dots \dim V_{k_s}} \Psi \Big( x_{(1)} \partial^{s}_\sfw\big(\phi_{v_{p_1}} \otimes \dots \otimes \phi_{v_{p_s}}\big) \hash x_{(2)} h \Big).
\end{align*}
Here, $x_{(1)} \partial^{s}_\sfw\big(\phi_{v_{p_1}} \otimes \dots \otimes \phi_{v_{p_s}} \otimes x_{(2)} h \big)$ lies in $\im \partial_\sfw^{s}$ since this set is closed under the action from $H$, and hence the whole expression lies in $\Psi(\im \partial^{s}_\sfw \hash H)$, as required. \\
\indent ($\subseteq$) We use the same bases of $V$ and $V^*$ as above, but we choose a different basis for $H$. Indeed, let $h_1 = 1$ and extend this to a basis $\{ h_1, h_2, \dots, h_m \}$ for $H$ with the property that $\Tr(h_i) = 0$ for $i \neq 1$; we can do this by first choosing any basis $\{ h_1=1, h_2', \dots, h_m'\}$ and then setting $h_i = h_i' - \frac{\Tr(h_i')}{\Tr(h_1)} h_1$ for $i \geqslant 2$. We now determine the values of $R \phi_{v_p \otimes h_1} = R \phi_{v_p \otimes 1}$ on elements of the form $v_q \otimes 1$. So suppose $(R \phi_{v_p \otimes 1})(v_q \otimes 1) = h$ for some $h \in H$. Then, for $1 \leqslant i \leqslant m$,
\begin{align*}
\Tr( h h_i ) &= \Tr\big((R \phi_{v_p \otimes 1})(v_q \otimes 1)h_i\big) = \Tr\big((R \phi_{v_p \otimes 1})(v_q \otimes h_i)\big) = \phi_{v_p \otimes 1}(v_q \otimes h_i) \\
&= \left \{ 
\begin{array}{ll}
[v_q]_{v_p} & \text{if } i=1,  \\
0 & \text{otherwise,}
\end{array}
\right. \\
&= \Tr\big([v_q]_{v_p} h_i\big).
\end{align*}
Since $\Tr$ is nondegenerate, it follows that $h = [v_q]_{v_p} 1_H$. \\
\indent Now, to establish this desired inclusion, by linearity it suffices to show that any element of the form $\Psi\big(\partial^{s}_\sfw(\phi_{v_{p_1}} \otimes \dots \otimes \phi_{v_{p_s}}) \hash h\big)$ lies in $\im \partial^{s}_{\Psi(\sfw \hash 1)}$; that is, we must show that
\begin{align}
\sum_q \alpha_q [v_{q_1}]_{v_{p_s}} \dots [v_{q_s}]_{v_{p_1}} (v_{q_{s+1}} \otimes 1) \otimes \dots \otimes (v_{q_{\ell-1}} \otimes 1) \otimes (v_{q_{\ell}} \otimes h) \in \im \partial^{s}_{\Psi(\sfw \hash 1)}. \label{partialinclusion}
\end{align}
Indeed, we have
\begin{align*}
\im \partial^{s}_{\Psi(\sfw \hash 1)} &\ni \partial^{s}_{\Psi(\sfw \hash 1)}\big( \phi_{v_{p_1} \otimes 1} \otimes \dots \otimes \phi_{v_{p_s} \otimes 1} \otimes h \big)  \\
&= \sum_q \alpha_q (R \phi_{v_{p_s} \otimes 1})(v_{q_1} \otimes 1) \dots (R \phi_{v_{p_1} \otimes 1})(v_{q_s} \otimes 1) (v_{q_{s+1}} \otimes 1) \otimes \dots \otimes (v_{q_{\ell-1}} \otimes 1) \otimes (v_{q_{\ell}} \otimes h) \\
&=\sum_q \alpha_q [v_{q_1}]_{v_{p_s}} \dots [v_{q_s}]_{v_{p_1}} (v_{q_{s+1}} \otimes 1) \otimes \dots \otimes (v_{q_{\ell-1}} \otimes 1) \otimes (v_{q_{\ell}} \otimes h),
\end{align*}
and so (\ref{partialinclusion}) holds.
\end{proof}

To summarise, in this section we have shown the following:

\begin{thm} \label{smashisderquot}
Suppose that the pair $(A,H)$ satisfies Hypothesis \ref{mainhypothesis}, and write $A = \scrD(\sfw,\ell-m)$ for some $\sigma$-twisted superpotential $\sfw$. Then
\begin{align*}
A \hash H \cong \scrD(\Psi(\sfw \hash 1), \ell-m),
\end{align*}
where $\Psi(\sfw \hash 1)$ is a twisted superpotential, where the twist $\nu$ is given by $\nu = \sigma \otimes {}_{\hdet} \Xi$.
\end{thm}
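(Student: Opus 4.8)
The plan is to assemble the results already proved in this section; the theorem is essentially a bookkeeping step once those are in hand. Recall that Lemma~\ref{twistedlem} shows $\Psi(\sfw \hash 1)$ is a $\nu$-twisted weak potential, the Proposition immediately preceding this theorem upgrades this to the statement that $\Psi(\sfw \hash 1)$ is a genuine $\nu$-twisted superpotential with $\nu = \sigma \otimes {}_{\hdet}\Xi$, and the final Lemma identifies the two relation ideals. So the only remaining task is to chase the defining ideal of $A \hash H$ through the algebra isomorphism $\Psi$.

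First I would recall that, under Hypothesis~\ref{mainhypothesis} and by the theorem of Dubois-Violette, $A = \scrD(\sfw,\ell-m) = T_\Bbbk(V)/\langle \im \partial^{\ell-m}_\sfw \rangle$. Applying $- \hash H$ and using that, for any ideal $I$ of $T_\Bbbk(V)$, one has $I \hash H = \langle \Psi(I \otimes H) \rangle$ inside $T_\Bbbk(V) \hash H \cong T_H(V \otimes H)$, we obtain
\[
A \hash H \cong \frac{T_\Bbbk(V)\hash H}{\langle \im \partial^{\ell-m}_\sfw \rangle \hash H} \cong \frac{T_H(V \otimes H)}{\langle \Psi(\im \partial^{\ell-m}_\sfw \hash H)\rangle}.
\]
Then I would invoke the final Lemma, namely the equality $\Psi(\im \partial^{\ell-m}_\sfw \hash H) = \im \partial^{\ell-m}_{\Psi(\sfw \hash 1)}$, to rewrite the right-hand side as $T_H(V \otimes H)/\langle \im \partial^{\ell-m}_{\Psi(\sfw \hash 1)}\rangle = \scrD(\Psi(\sfw \hash 1),\ell-m)$, which is exactly the asserted isomorphism. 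The statement that $\Psi(\sfw \hash 1)$ is a twisted superpotential with twist $\nu = \sigma \otimes {}_{\hdet}\Xi$ is then quoted directly from the Proposition preceding the theorem.

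The point worth stressing is that the substantive work lies entirely in the preparatory results rather than in this final assembly. The computation of the maps $R\phi^{\nu^*}$ and $L\phi$ in Proposition~\ref{RLprop} rests on the choice of the normalised integral trace $\Tr$ (Lemma~\ref{tracelem}) and on the integral identities of Lemma~\ref{integrallemma}, and the verification of the superpotential condition $\llbracket \phi^{\nu^*}, \Psi(\sfw\hash 1)\rrbracket = \llbracket \Psi(\sfw\hash 1), \phi\rrbracket$ repeatedly uses the identity~(\ref{needHSS}), which in turn forces $S^2 = \id_H$ — equivalently, the semisimplicity of $H$ — together with Theorem~\ref{hdetthm} to convert the $\hdet$-twist into the winding automorphism $\Xi$. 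Thus the main obstacle is the twisted-superpotential Proposition, not Theorem~\ref{smashisderquot} itself; once that Proposition and the ideal-matching Lemma are available, the theorem follows by the ideal chase described above.
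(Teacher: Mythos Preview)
Your proposal is correct and matches the paper's approach exactly: in the paper, Theorem~\ref{smashisderquot} is stated as a summary of Section~\ref{derquotsec} with no separate proof, the ideal chase you describe appears verbatim in the paragraph preceding the final Lemma, and the substantive content is precisely Lemma~\ref{twistedlem}, Proposition~\ref{RLprop}, the twisted-superpotential Proposition, and the ideal-matching Lemma that you cite. Your remarks on where the real difficulty lies are also accurate.
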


\section{From smash products to quivers}
In the previous section, we showed that if $H$ is a semisimple Hopf algebra acting homogeneously on an $m$-Koszul AS regular algebra, then $A \hash H$ is a derivation-quotient algebra. Moreover, we gave an explicit formula for the twisted superpotential defining $A \hash H$. We now use this to write down a path algebra with relations, $\Lambda$, to which $A \hash H$ is Morita equivalent. As explained in the introduction, the fact that this is possible follows from general results in the literature, but our result provides a much more explicit presentation; in particular, the relations in $\Lambda$ are also obtained from a twisted superpotential. 
We will also deduce a number of corollaries; for example, we will give a criterion in terms of $\Lambda$ which shows when the Auslander map is an isomorphism. \\
\indent In this section, it will sometimes be convenient to compose morphisms from left to right; when this is the case, our notation for the application of a morphism $f$ to an element $a$ will be $a^f$, and composition of a morphism $f$ followed by $g$ will be denoted $f \sbullet g$. \\
\indent Assume that the pair $(A,H)$ satisfies Hypothesis \ref{mainhypothesis} as usual. As in (\ref{awdecomp}), $H$ has some Artin--Wedderburn decomposition
\begin{align*}
H \cong \bigoplus_{k=0}^n \text{Mat}_{\dim V_k}(\Bbbk),
\end{align*}
where $V_0, \dots, V_n$ are the irreducible representations of $H$. Again, we fix such an isomorphism which allows us to give the right hand side the structure of a Hopf algebra. Using this, we obtain $n+1$ pairwise orthogonal idempotents $e_0, e_1, \dots, e_n$ in $H$ such that $V_i \cong H e_i$; more concretely, one could take $e_i \coloneqq e_{11}^{(i)}$. Then $e = \sum_{i=0}^n e_i$ is a full idempotent of $H$, and so $eHe$ is Morita equivalent to $H$. In fact, if we interpret $e$ as an element of $A \hash H$, then $e$ is still a full idempotent, and $e(A \hash H)e$ is Morita equivalent to $A \hash H$. We now explain how the results of the previous section allow us to show that $e(A \hash H)e$ can be viewed as a quiver with relations which are derived from a twisted superpotential. \\
\indent To begin with, we need to ensure that the idempotent $e$ is closed under the action of the twist $\nu$; in other words, recalling that $\restr{\nu}{H} = \Xi$ is the left winding automorphism of $H$ associated to $\hdet_A$, we require $\Xi(e) = e$. One way to achieve this is explained before Theorem 3.2 in \cite{bsw}, which we paraphrase below. \\
\indent The goal is to find idempotents $e_0, e_1, \dots, e_n \in H$ such that $e = \sum_{i=0}^n e_i$ is a full idempotent, $H e_i \cong V_i$ and, for each $i$, $e_i \Psi(\sfw \hash 1) = \Psi(\sfw \hash 1) e_j$ for some $j$. By Lemma \ref{twistedlem}, this is equivalent to requiring $e_j = \Xi(e_i)$. Now, $\Bbbk \sfw$ is a one-dimensional representation of $H$, so the functor $\Bbbk \sfw \otimes -$ induces a permutation of the $V_i$, and hence partitions the set of irreducible representations into orbits. \\
\indent Fix some irreducible representation $U$, and let $r \geqslant 1$ be minimal so that $(\Bbbk \sfw)^{\otimes r} \otimes U \cong U$. In particular, this means that the modules $(\Bbbk \sfw)^{\otimes i} \otimes U$ for $0 \leqslant i \leqslant r-1$ are pairwise nonisomorphic. Fix an isomorphism $\psi : U \to (\Bbbk \sfw)^{\otimes r} \otimes U$, and let $u \in U$ be an eigenvector for $\psi$, in the sense that $\psi(u) = \lambda \sfw^{\otimes r} \otimes u$ for some $\lambda \in \Bbbk$. Now let $f_1 \in H$ be a primitive idempotent such that the map $H f_1 \to U, \hspace{2pt} f_1 \mapsto u$ is an isomorphism. There is an isomorphism
\begin{align*}
\Hom_H(H f_1, (\Bbbk \sfw)^{\otimes r} \otimes H f_1) \cong f_1 ((\Bbbk \sfw)^{\otimes r} \otimes H) f_1,
\end{align*}
and under this isomorphism, the map
\begin{align*}
\psi' : H f_1 \to (\Bbbk \sfw)^{\otimes r} \otimes H f_1, \quad \psi'(f_1) = \lambda \sfw^{\otimes r} \otimes f_1
\end{align*}
corresponds to the element $f_1$. In particular, we find that $f_1 (\sfw^{\otimes r} \hash 1) = (\sfw^{\otimes r} \hash 1) f_1$ in $T_\Bbbk(V) \hash H$. \\
\indent Now let $f_2, f_3, \dots, f_r \in H \subseteq T_\Bbbk(V) \hash H$ be the primitive idempotents satisfying $f_i (\sfw \hash 1) = (\sfw \hash 1) f_{i+1}$ for $1 \leqslant i \leqslant r-1$. By construction, we also have $f_r (\sfw \hash 1) = (\sfw \hash 1) f_1$. In particular, by Lemma \ref{twistedlem}, $\Xi(f_i) = f_{i+1}$, where the subscripts are read modulo $r$ if necessary. Therefore the set $\{f_1, \dots, f_r \}$ is closed under the twist $\nu$. (Of course, we could have just picked $f_1$ to be any idempotent in $H$ with $H f_1 \cong U$ and then set $f_{i+1} = \Xi(f_i)$ for $1 \leqslant i \leqslant r-1$, but then there is no guarantee that $\Xi^r(f_1) = f_1$.) \\
\indent Repeating this procedure for each orbit of irreducible representations, we obtain $n+1$ elements of $H$ (which we may also view as elements of $A \hash H$) which we label $e_0, \dots, e_n$, with the following properties:
\begin{itemize}[leftmargin=25pt,topsep=0pt,itemsep=0pt]
\item The $e_i$ are pairwise orthogonal idempotents;
\item The set $\{ e_0, \dots, e_n \}$ is closed under the twist $\nu$;
\item $H e_i \cong V_i$, and $H e_0$ is the trivial representation;
\item The element $e \coloneqq e_0 + \dots + e_n$ is a full idempotent of $H$, and the same is true if we view $e$ as an element of $A \hash H$.
\item $e_0 (A \hash H) e_0 \cong A^H$, by \cite[4.3.4 Lemma]{montgomery2}.
\end{itemize}
Finally, by combining \cite[Lemma 2.2]{bsw} with our Theorem \ref{smashisderquot}, we deduce the following:

\begin{prop} \label{moritaequivprop}
Suppose that the pair $(A,H)$ satisfies Hypothesis \ref{mainhypothesis}. Then $A \hash H$ is Morita equivalent to
\begin{align}
e (A \hash H) e \cong \scrD(e \Psi(\sfw \hash 1) e, \ell - m). \label{cornerisder}
\end{align}
In particular, $A^H \cong e_0 \scrD(e \Psi(\sfw \hash 1) e, \ell - m) e_0$.
\end{prop}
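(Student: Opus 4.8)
The plan is to assemble the proposition from three ingredients already in place: Theorem \ref{smashisderquot}, the idempotent construction carried out immediately above the statement, and \cite[Lemma 2.2]{bsw}.

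First I would check that $e = e_0 + \dots + e_n$ is a full idempotent of $H$: in each Wedderburn block $\mathrm{Mat}_{\dim V_k}(\Bbbk)$ the matrix unit $e_{11}^{(k)} = e_k$ generates the whole block as a two-sided ideal, so $HeH = H$, and consequently $(A \hash H) e (A \hash H) = A \hash H$ as well. Standard Morita theory then gives that $A \hash H$ is Morita equivalent to the corner ring $e(A \hash H)e$, with the vertex $e_0$ singled out.

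Next I would transport everything through the isomorphism $A \hash H \cong \scrD(\Psi(\sfw \hash 1), \ell-m)$ of Theorem \ref{smashisderquot}; under this identification $e$ corresponds to an idempotent of the semisimple base $H$ of the tensor algebra $T_H(V \otimes H)$. The key point, already arranged in the construction before the statement, is that $\Xi = \restr{\nu}{H}$ permutes the set $\{ e_0, \dots, e_n \}$, so that $\nu(e) = \sum_i \nu(e_i) = \sum_i e_{\pi(i)} = e$. With $\nu(e) = e$ in hand, \cite[Lemma 2.2]{bsw} applies: for a derivation-quotient algebra $\scrD(\mathsf{w}', i)$ over a semisimple base with twist $\nu$ and an idempotent $e$ fixed by $\nu$, the corner $e\,\scrD(\mathsf{w}', i)\,e$ is again a derivation-quotient algebra, namely $\scrD(e \mathsf{w}' e, i)$ over the semisimple base $eHe$ with bimodule $e(V \otimes H)e$. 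Taking $\mathsf{w}' = \Psi(\sfw \hash 1)$ and $i = \ell-m$ yields the isomorphism (\ref{cornerisder}).

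Finally, for the \emph{in particular} clause, I would pre- and post-multiply (\ref{cornerisder}) by $e_0$ and invoke the identification $e_0(A \hash H)e_0 \cong A^H$ from \cite[4.3.4 Lemma]{montgomery2}, already recorded in the bullet list, giving $A^H \cong e_0 \scrD(e \Psi(\sfw \hash 1) e, \ell-m) e_0$. I expect the only real work to be in confirming that the hypotheses of \cite[Lemma 2.2]{bsw} hold verbatim in this setting, chiefly that $\nu$ fixes $e$ (handled by the orbit construction) and that the chosen trace $\Tr$ restricts sensibly to $eHe$, so that the pairings $R$ and $L$ are compatible with passing to the corner; this is where Lemma \ref{tracelem} is used. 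No genuinely new computation is required beyond Theorem \ref{smashisderquot} and Lemma \ref{tracelem}, so the main obstacle is matching conventions with \cite{bsw} rather than any substantive mathematics.
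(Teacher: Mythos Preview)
Your proposal is correct and follows the same route as the paper: the paper's proof is literally the single sentence ``by combining \cite[Lemma 2.2]{bsw} with our Theorem \ref{smashisderquot}'', together with the construction of $e$ and the verification $\nu(e)=e$ carried out immediately before the statement, and the corner identification $A^H \cong e_0(A\hash H)e_0$ from \cite[4.3.4 Lemma]{montgomery2} already recorded in the bullet list. Your only superfluous remark is invoking Lemma \ref{tracelem} here; that lemma was used earlier (in the proof of Proposition \ref{RLprop}) rather than at this step, and \cite[Lemma 2.2]{bsw} handles the trace compatibility internally.
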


\subsection{Viewing $\scrD(e \Psi(\sfw \hash 1) e, \ell - m)$ as a quiver with relations} We now show how the algebra (\ref{cornerisder}) can be viewed as the path algebra of a quiver with relations, and that these relations are derived from the superpotential $e \Psi(\sfw \hash 1) e$, which can be interpreted as a linear combination of paths in the quiver. \\
\indent We first show that we can view $e (T_\Bbbk(V) \otimes H) e \hspace{3pt}(\cong eT_H(V \otimes H)e )$ as the path algebra of a quiver. The vertices of $Q$ correspond to the idempotents $e_i$, and the arrows correspond to elements of $e (V \otimes H) e$. In particular, the number of arrows from vertex $i$ to vertex $j$ is equal to the dimension of
\begin{align}
e_i (V \otimes H) e_j \cong \Hom_H(H e_i, (V \otimes H) e_j) = \Hom(V_i, V \otimes V_j); \label{arrowsinquiver}
\end{align}
that is, the number of arrows from $i$ to $j$ is equal to the multiplicity of the irreducible module $V_i$ in $V \otimes V_j$. The set of arrows of $Q$ is given by the union of bases for the vector spaces $e_i (V \otimes H) e_j$. It follows that $Q$ is the left McKay quiver for the action of $H$ on $A$, and that $e (T_\Bbbk(V) \otimes H) e$ is isomorphic to the path algebra of $Q$. \\
\indent Using (\ref{arrowsinquiver}), we will be able to identify arrows in $Q$ (i.e.\ elements of $e(V \otimes H)e$) with morphisms of $H$-modules. Since our preference will be to compose arrows in quivers from left to right, in this section it will be convenient to compose the corresponding morphisms from left to right, using the notation at the start of this section. \\
\indent By (\ref{arrowsinquiver}), we can identify every arrow $\alpha : i \to j$ in $Q$ (which is itself an element of $e_i (V \otimes H) e_j$) with an $H$-module map $\phi_\alpha : V_i \to V \otimes V_j$. Extending this, to every path $p = \alpha_1 \dots \alpha_k \in e_i (V^{\otimes k} \otimes H) e_j \subseteq T_\Bbbk(V) \hash H$ in $Q$ we can associate an $H$-module map $\phi_p \coloneqq \phi_{\alpha_1} \sbullet (\id_V \otimes \phi_{\alpha_2}) \sbullet \dots \sbullet (\id_V^{\otimes k-1} \otimes \phi_{\alpha_k})$. Abusing notation, we will simply write $\phi_p = \phi_{\alpha_1} \sbullet \dots \sbullet \phi_{\alpha_k} \in \Hom_H(V_{t(\alpha_1)}, V^{\otimes k} \otimes V_{h(\alpha_k)})$. Conversely, it is straightforward to show that every map in $\Hom_H(V_i, V^{\otimes k} \otimes V_j)$ can be decomposed uniquely as a sum of morphisms of this form. \\
\indent We also wish to identify certain morphisms which are dual to those corresponding to the arrows in $Q$. Recall that the number of arrows from vertex $i$ to vertex $j$ is equal to the dimension of $\Hom_H(V_i, V \otimes V_j)$; call this number $m_{ij}$. In particular, the multiplicity of $V_i$ as an irreducible summand of $V \otimes V_j$ is $m_{ij}$, and hence we also have $ \dim \Hom_H(V \otimes V_j, V_i) = m_{ij}$. Therefore for each arrow $\alpha : i \to j$ we can also choose an $H$-module map $\psi_\alpha : V \otimes V_j \to V_i$. In particular, we can choose each map $\psi_\alpha$ so that it is dual to $\phi_\alpha$ in the sense that, for any arrow $\beta : i \to j$, the composition of $H$-module maps
\begin{align}
V_{i} \xrightarrow{\phi_\alpha} V \otimes V_{j} \xrightarrow{\psi_\beta}  V_{i} \label{arrowidentity}
\end{align}
is the identity on $V_i$ if $\alpha = \beta$, and the zero map otherwise. In turn, for every path $p = \alpha_1 \dots \alpha_k$ in $Q$, we obtain $H$-module maps $\psi_p \coloneqq \psi_{\alpha_k} \sbullet \dots \sbullet \psi_{\alpha_1} \in \Hom_H(V^{\otimes k} \otimes V_{h(\alpha_k)}, V_{t(\alpha_1)})$ with the property that, by (\ref{arrowidentity}), if $q$ is any other path between the same vertices, the composition
\begin{align}
V_{t(p)} \xrightarrow{\phi_p} V^{\otimes k} \otimes V_{h(p)} \xrightarrow{\psi_q} V_{t(p)} \label{compforpaths}
\end{align}
is the identity on $V_{t(p)}$ if $p = q$, and the zero map otherwise. Again, we abuse notation here by writing $\psi_{\alpha_k} \sbullet \psi_{\alpha_{k-1}} \sbullet \dots \sbullet \psi_{\alpha_1}$ rather than $(\id_V^{\otimes k-1} \otimes \psi_{\alpha_k}) \sbullet ( \id_V^{\otimes k-2} \otimes \psi_{\alpha_{k-1}}) \sbullet \dots \sbullet \psi_{\alpha_1}$. \\
\indent We now use the above to write the element $e (\sfw \hash 1) e \in e(T_\Bbbk(V) \hash H)e$ in terms of paths in the quiver. Write
\begin{align*}
e (\sfw \hash 1) e = \sum_p \lambda_p p \eqqcolon \Phi
\end{align*}
for some $\lambda_p \in \Bbbk$, where the sum runs over all paths $p$ and all but finitely many $\lambda_p$ are zero. Let $\tau$ be the permutation of the vertices satisfying 
\begin{align*}
\Bbbk \sfw \otimes V_i \cong V_{\tau(i)}.
\end{align*}
By our choice of idempotents, we can (and do) assume that this isomorphism is given by the map
\begin{align*}
\theta_i : V_{\tau(i)} \to \Bbbk \sfw \otimes V_i, \qquad \theta_i(e_{\tau(i)}) = \sfw \otimes e_i.
\end{align*}
In particular, in $T_\Bbbk(V) \hash H$ this means that $e_{\tau(i)} (\sfw \otimes 1) = (\sfw \otimes 1) e_i$, and so $\nu(e_{\tau(i)}) = e_i$, i.e.\ the permutation of the vertices induced by $\nu$ is inverse to the permutation $\tau$. \\
\indent Fix a path $p$ in $Q$ of length $\ell$ from vertex $i$ to vertex $j$ (where we recall that $\sfw \in V^{\otimes \ell}$), and consider the composition
\begin{align}
V_{\tau(j)} \xrightarrow{\theta_{\tau(j)}} \Bbbk \sfw \otimes V_j \xhookrightarrow{\iota} V^{\otimes \ell} \otimes V_j \xrightarrow{\psi_p}  V_i. \label{importantcomp}
\end{align}
Since this composition is a map of $H$-modules and the domain and codomain are irreducible, Schur's Lemma implies that it is a scalar multiple of the identity, say $\mu_p \id_{V_i}$. In particular, 
\begin{align}
\mu_p \neq 0 \quad \Leftrightarrow \quad i = \tau(j) \label{startendvertex}
\end{align}
Applying this map to $e_i = e_{\tau(j)}$:
\begin{align}
e_i \mapsto \sfw \otimes e_j = (e (\sfw \otimes 1) e) e_j = \sum_{q : h(q) = j} \lambda_q q = \sum_{q : h(q) = j} \lambda_q e_i^{\phi_q} 
\mapsto \sum_{q : h(q) = j} \lambda_q e_i^{\phi_q \sbullet \psi_p} 
= \lambda_p e_i. \label{supercoeff}
\end{align}
\noindent Therefore $\lambda_p = \mu_p$. Since $\Psi : T_\Bbbk(V) \otimes H \to T_H(V \otimes H)$ is an isomorphism which restricts to the identity on the vertices and arrows of $Q$, the same coefficients appear in the terms of $e\Psi(\sfw \hash 1)e$, which we also call $\Phi$. In particular, we have a recipe for writing down $\Phi$ in terms of paths in the quiver. We can now reformulate Proposition \ref{moritaequivprop} as follows:

\begin{thm} \label{pathalgthm}
Suppose that the pair $(A,H)$ satisfies Hypothesis \ref{mainhypothesis}. Then $A \hash H$ is Morita equivalent to $\Lambda \coloneqq \scrD(\ell-m, \Phi)$, where $Q$ is the left McKay quiver for the action of $H$ on $A$, and $\Phi$ is a twisted superpotential in the path algebra of $Q$. Explicitly, $\Lambda = e(A \hash H)e$ so, in particular, $A^H \cong e_0 \Lambda e_0$.
\end{thm}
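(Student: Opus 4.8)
The plan is to assemble the ingredients developed in this subsection on top of Proposition \ref{moritaequivprop}. That proposition already gives that $A \hash H$ is Morita equivalent to $e(A \hash H)e \cong \scrD(e\Psi(\sfw \hash 1)e,\ell-m)$, and that $A^H \cong e_0\scrD(e\Psi(\sfw \hash 1)e,\ell-m)e_0$; so the only thing left is to recognise the right-hand side as the path algebra of the McKay quiver modulo relations derived from a twisted superpotential. First I would record the algebra isomorphism $e(T_\Bbbk(V) \hash H)e \cong eT_H(V \otimes H)e \cong \Bbbk Q$: the vertex idempotents of $\Bbbk Q$ are the $e_i$, a set of arrows $i \to j$ is a chosen $\Bbbk$-basis of $e_i(V \otimes H)e_j$, and by (\ref{arrowsinquiver}) this space is isomorphic to $\Hom_H(V_i, V \otimes V_j)$, whose dimension is the multiplicity $m_{ij}$ of $V_i$ in $V \otimes V_j$; hence $Q$ is exactly the left McKay quiver of $(A,H)$. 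Since $e$ is a full idempotent, concatenation of paths matches multiplication in $e(T_\Bbbk(V)\hash H)e$, and the isomorphism restricts to the identity on vertices and arrows.

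Next I would check that the two notions of derivation-quotient algebra match under this isomorphism, so that $\scrD(e\Psi(\sfw \hash 1)e,\ell-m) \cong \scrD(\ell-m,\Phi)$ with $\Phi$ read as a linear combination of length-$\ell$ paths. The rewriting $\Phi = e\Psi(\sfw \hash 1)e = \sum_p \lambda_p p$, with $\lambda_p = \mu_p$ as in (\ref{importantcomp})--(\ref{supercoeff}), is already in hand; what remains is to see that the bimodule differential $\partial^{\ell-m}_{e\Psi(\sfw \hash 1)e}$ of Definition \ref{derquodef2}, built from the pairings $\llbracket-,-\rrbracket$ attached to the fixed trace $\Tr$, reproduces exactly formal left deletion of all length-$(\ell-m)$ initial subpaths of $\Phi$. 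Once this is established, the two ideals of relations coincide and the two derivation-quotient algebras are equal as quivers with relations.

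Finally I would verify that $\Phi$ is a twisted superpotential in $\Bbbk Q$. By construction the set $\{e_0,\dots,e_n\}$ was chosen to be stable under the twist $\nu$, equivalently $\nu(e)=e$; cornering the two defining identities of Definition \ref{superpotentialdefn2} for $\Psi(\sfw \hash 1)$ by $e$ (in the manner of \cite[Lemma 2.2]{bsw}) shows that $\Phi = e\Psi(\sfw \hash 1)e$ is a $\nu$-twisted superpotential for the restriction of $\nu$ to $\Bbbk Q$. Translated to the quiver, this twist permutes the vertices by $\tau^{-1}$ (where $\Bbbk\sfw \otimes V_i \cong V_{\tau(i)}$) and acts on arrows through $\sigma$ together with the left winding automorphism ${}_{\hdet}\Xi$, so the twist indeed depends on the homological determinant, as asserted. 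Combining these three steps with Proposition \ref{moritaequivprop} gives the statement, including $A^H \cong e_0\Lambda e_0$.

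The main obstacle is the middle step: showing that the intrinsic, trace-dependent differential $\partial^{\ell-m}$ on the $H$-bimodule $e(V \otimes H)e$ agrees on the nose with formal deletion of paths in $\Bbbk Q$. The computations in Proposition \ref{RLprop} already point to the correct normalisations — the scalars $\dim H/\dim V_k$ are precisely what make $R\phi$ dual to the arrow $\phi$ — but one must check that these factors genuinely cancel so that the two-sided ideals generated on each side are literally equal (equality up to nonzero scalars would also suffice, but is less clean). Everything else, namely the identification $e(T_\Bbbk(V)\hash H)e \cong \Bbbk Q$ and the cornering of the superpotential by $e$, is routine bookkeeping.
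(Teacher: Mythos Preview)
Your proposal is correct and follows essentially the same approach as the paper: the theorem is presented there as a direct reformulation of Proposition \ref{moritaequivprop}, with the preceding discussion in the subsection (the identification $e(T_\Bbbk(V)\hash H)e \cong \Bbbk Q$ via (\ref{arrowsinquiver}) and the computation of the coefficients $\lambda_p$ of $\Phi$ via (\ref{importantcomp})--(\ref{supercoeff})) serving as the proof. The ``main obstacle'' you flag---matching the bimodule differential with formal path deletion---is handled in the paper simply by the citation to \cite[Lemma 2.2]{bsw} already invoked in Proposition \ref{moritaequivprop}, so you are being slightly more cautious than the paper itself, but not taking a different route.
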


\begin{example} \label{KSexample}
As a first example, it is well-known that if $\Bbbk[u,v]^G$ is a Kleinian singularity with associated McKay quiver $Q$, then the algebra $\Lambda$ from Theorem \ref{pathalgthm} is the preprojective algebra $\Pi(Q)$; see \cite[Corollary 4.2]{bsw}.
\end{example}

\begin{rem}
Suppose that $H$ is commutative; that is, $H$ is the dual of the group algebra of a finite group (this includes the possibility that $H = \Bbbk G$ where $G$ is a finite abelian group, since in that case $H \cong H^*$). Then $H \cong \Bbbk^{n+1}$ as $\Bbbk$-algebras, and so the full idempotent $e$ which gives rise to the Morita equivalence between $\Lambda$ and $A \hash H$ is simply the identity. Therefore $A \hash H = e(A \hash H)e \cong \Lambda$.
\end{rem}

\begin{rem} \label{altmethod}
Given an arrow $\alpha : i \to j$ in the quiver for $A \hash H$, in practice it is easier to write down an explicit formula for the map $\phi_\alpha : V_i \to V \otimes V_j$ than it is for the dual map $\psi_\alpha : V \otimes V_j \to V_i$. We now give an alternative description of the maps $\psi_\alpha$ which is more amenable to computations, and explain how they can be used to obtain the coefficients as in (\ref{supercoeff}). \\
\indent Given finite-dimensional $H$-modules $U,V,W$, it is straightforward to show that the map 
\begin{align*}
\eta : \Hom_H(U, V^* \otimes W) \to \Hom_H(V \otimes U, W), \qquad \eta(\phi)(v \otimes u) = \sum_i f_i(v) w_i, \text{ where } \phi(u) = \sum_i f_i \otimes w_i
\end{align*}
is a vector space isomorphism (here, we require $S^2 = \id$). In particular, if $\alpha : i \to j$ is an arrow, then there is a map $\xi_\alpha : V_j \to V^* \otimes V_i$ such that $\eta (\xi_\alpha) = \psi_\alpha$. The duality condition (\ref{arrowidentity}) then means that the composition  
\begin{align}
V_{i} \xrightarrow{\phi_\alpha} V \otimes V_{j} \xrightarrow{\xi_\beta} V \otimes V^* \otimes V_{i} \xrightarrow{\text{eval} \otimes \id} V_i \label{arrowidentity2}
\end{align}
is the identity when $\alpha = \beta$, and the zero map otherwise. These maps can be extended to paths in the same way as in (\ref{compforpaths}) to obtain maps $\xi_p$. Finally, if $p$ is a path of length $\ell$ in the quiver, then the composition
\begin{align}
V_{\tau(j)} \xrightarrow{\theta_{\tau(j)}} \Bbbk \sfw \otimes V_j \xhookrightarrow{\hspace{3pt}\iota\hspace{3pt}} V^{\otimes \ell} \otimes V_j \xrightarrow{\id \otimes \xi_p} V^{\otimes \ell} \otimes (V^*)^{\otimes \ell} \otimes V_i \xrightarrow{\text{eval} \otimes \id} V_i \label{Phicoeffs}
\end{align}
is equal to $\lambda_p \id_{V_i}$, where $\lambda_p$ is the coefficient of $p$ in $\Phi$. (Here, we emphasise that the evaluation map $V^{\otimes \ell} \otimes (V^*)^{\otimes \ell} \to \Bbbk$ is given by $v_1 \otimes \dots v_\ell \otimes f_1 \otimes \dots \otimes f_\ell \mapsto f_1(v_\ell) \dots f_\ell(v_1)$ to ensure that it is an $H$-module morphism, as in (\ref{tensordual}).) 
\end{rem}

\begin{rem} \label{orbit}
As noted in \cite[p.\ 1508]{bsw}, the exact form of the twisted superpotential, and hence the relations, depends highly on the choice of representatives in $e(A \hash H)e$ for the arrows in $Q$. There is an action of the graded automorphism group $\Autgr (\Bbbk Q)$ on $(\Bbbk Q)_\ell$, and all twisted superpotentials that give isomorphic derivation-quotient algebras lie in the same orbit under this action.
\end{rem}

We now discuss a number of corollaries and applications of Theorem \ref{pathalgthm}.

\subsection{Applications to the Auslander map} 
Let $(A,H)$ be a pair satisfying Hypothesis \ref{mainhypothesis}. Recall that the Auslander map is 
\begin{align*}
\gamma : A \hash H \to \End(A_{A^H}), \quad \gamma(a \hash h)(b) = a (h \cdot b).
\end{align*}
Theorem \ref{introbhz} provides a computationally useful criterion for establishing when the Auslander map is an isomorphism. However, applying this result can still be quite involved; for example, lengthy calculations are required in \cite{ckwzi,won,crawford19} before Theorem \ref{introbhz} can be applied. The following result provides a similar criterion, but allows computations to be performed in $\Lambda$ rather than $A \hash H$.

\begin{cor} \label{austhmcor}
Suppose that the pair $(A,H)$ satisfies Hypothesis \ref{mainhypothesis}, and that $A$ is GK-Cohen-Macaulay, as in Theorem \ref{introbhz}. Let $\Lambda$ be the algebra from Theorem \ref{pathalgthm}. Then the Auslander map is an isomorphism if and only if $\GKdim \Lambda/\langle e_0 \rangle \leqslant \GKdim A -2$.
\end{cor}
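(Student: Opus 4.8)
The strategy is to invoke Theorem \ref{introbhz} and then transport its criterion along the Morita equivalence of Theorem \ref{pathalgthm}; the only work beyond this is to identify the ideal $\langle 1 \hash t\rangle$ with $\langle e_0\rangle$ and to check that GK dimension and this ideal are carried correctly across the equivalence. First I would pin down the integral: let $t \in H$ be the nonzero (two-sided) integral of Theorem \ref{introbhz}, so $ht = th = \varepsilon(h)t$ for all $h \in H$. Since $H$ is semisimple we may normalise so that $\varepsilon(t) = 1$, and then $t^2 = \varepsilon(t)t = t$, so $t$ is a central idempotent with $Ht = \Bbbk t$ a copy of the trivial module $V_0$. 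In the construction preceding Theorem \ref{pathalgthm}, the idempotent $e_0 \in H$ is primitive with $H e_0 \cong V_0$; as $V_0$ is one-dimensional, $e_0$ is the identity element of the corresponding Artin--Wedderburn block, so $e_0 = t$ after our normalisation. (Even without this, $e_0$ and $t$ are primitive idempotents affording isomorphic simple $H$-modules, hence conjugate by a unit of $H$, which is all that is needed below.) Viewing $H$ inside $A \hash H$ via $h \mapsto 1 \hash h$, it follows that $\langle 1 \hash t\rangle = \langle e_0\rangle$ as two-sided ideals of $A \hash H$.

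Next comes the Morita bookkeeping. By Theorem \ref{pathalgthm} the element $e = e_0 + \dots + e_n$ is a full idempotent of $A \hash H$ with $\Lambda = e(A \hash H)e$. For a full idempotent $e$ of a ring $R$, the assignment $J \mapsto eJe$ is an order-preserving bijection from two-sided ideals of $R$ to those of $eRe$, satisfying $eRe \cap J = eJe$; moreover the image of $e$ stays full in $R/J$, and $R/J$ is Morita equivalent to $(eRe)/(eJe)$. Since GK dimension is a Morita invariant, $\GKdim R/J = \GKdim (eRe)/(eJe)$. Applying this with $R = A \hash H$ and $J = \langle 1 \hash t\rangle = \langle e_0\rangle$, and using $e_0 = e e_0 = e_0 e$, one computes $e \langle e_0\rangle e = e(A \hash H)\,e_0\,(A \hash H)e = \Lambda e_0 \Lambda = \langle e_0\rangle_\Lambda$, whence $\GKdim (A \hash H)/\langle 1 \hash t\rangle = \GKdim \Lambda / \langle e_0\rangle$.

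Finally, Theorem \ref{introbhz} applies because $(A,H)$ satisfies Hypothesis \ref{mainhypothesis} and $A$ is GK-Cohen-Macaulay; it states that the Auslander map is an isomorphism if and only if $\GKdim (A \hash H)/\langle 1 \hash t\rangle \leqslant \GKdim A - 2$, and by the previous step this holds if and only if $\GKdim \Lambda / \langle e_0\rangle \leqslant \GKdim A - 2$, as required. The substantive point beyond Theorem \ref{introbhz} is the identification of $1 \hash t$ with the specific idempotent $e_0$ arising from the $\Bbbk\sfw$-orbit construction; this is the step I expect to require the most care, although it is short.
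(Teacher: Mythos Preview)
Your proof is correct and takes essentially the same approach as the paper: identify $e_0 = 1 \hash t$ and transport the criterion of Theorem~\ref{introbhz} across the Morita equivalence $A \hash H \sim \Lambda = e(A \hash H)e$. The only cosmetic difference is that the paper establishes the equality $\GKdim (A \hash H)/\langle e_0\rangle = \GKdim \Lambda/\langle e_0\rangle$ by writing out the tensor identities $\Lambda/\langle e_0\rangle \cong eT \otimes_T (T/\langle e_0\rangle) \otimes_T Te$ and its inverse and applying a cited GK-dimension bound to each, whereas you invoke Morita invariance of GK dimension and the ideal correspondence $J \mapsto eJe$ directly.
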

\begin{proof}
Let $T = A \hash H$, and let $e = e_0 + e_1 + \dots + e_n$ be the sum of idempotents from before Proposition \ref{moritaequivprop}, so that $\Lambda = eTe$. In particular, $e_0 = 1 \hash t$, where $t$ is the integral from Theorem \ref{introbhz}. The (graded) Morita equivalence between $\Lambda$ and $T$ is induced by the mutually inverse equivalences $eT \otimes_T - : T \leftgr \to \Lambda \leftgr$ and $Te \otimes_\Lambda - : \Lambda \leftgr \to T \leftgr$. Now,
\begin{align}
\frac{\Lambda}{\langle e_0 \rangle} = \frac{e \Lambda e}{e \Lambda e_0 \Lambda e} \cong \frac{eT \otimes_T T \otimes_T Te }{eT \otimes_T T e_0 T \otimes_T Te} \cong eT \otimes_T \frac{T}{\langle e_0 \rangle} \otimes_T Te. \label{moritatensors}
\end{align}
By two applications of \cite[Proposition 3.14]{mandr}, we find that $\GKdim \Lambda/\langle e_0 \rangle \leqslant \GKdim T/\langle e_0 \rangle$. Tensoring (\ref{moritatensors}) on the left and right by $Te$ and $eT$, respectively, we find that 
\begin{align*}
\frac{T}{\langle e_0 \rangle} \cong Te \otimes_\Lambda \frac{\Lambda}{\langle e_0 \rangle} \otimes_\Lambda eT,
\end{align*}
and \cite[Proposition 3.14]{mandr} now shows that $\GKdim T/\langle e_0 \rangle \leqslant \GKdim \Lambda/\langle e_0 \rangle$. Hence we have an equality, and the claim now follows by Theorem \ref{introbhz}.
\end{proof}

One advantage of this result is that it is often easier to determine the GK dimension of the algebra $\Lambda/\langle e_0 \rangle$ than that of $(A \hash H)/\langle e_0 \rangle$. For an example of this, see the last paragraph of Theorem \ref{ckwzproof}, and compare with \cite[Lemma 4.6, Proposition 4.7]{ckwzi}. \\
\indent It is possible for different pairs $(A,H)$ and $(B,K)$ satisfying Hypothesis \ref{mainhypothesis} to give rise to the same algebra $\Lambda$ (more generally, the algebras arising from Theorem \ref{pathalgthm} may only be isomorphic, but we can obtain equality by choosing different representatives for the arrows; see Remark \ref{orbit}). In particular, given a pair $(A,H)$ whose properties are unclear, it may be possible to find another pair $(B,K) = (B,\Bbbk G)$ which is easier to understand, in part because the invariant theory of finite groups is better understood than that of semisimple Hopf algebras. In this case, the properties of $A^H$ and $B^K$ are closely related:

\begin{thm} \label{samelambda}
Suppose that $(A,H)$ and $(B,K)$ both satisfy Hypothesis \ref{mainhypothesis}, and that $A$ and $B$ are GK-Cohen-Macaulay, as in Theorem \ref{introbhz}. Moreover assume that the algebras obtained by Theorem \ref{pathalgthm} to $(A,H)$ and $(B,K)$ are equal, and call this common algebra $\Lambda$. Necessarily, we must have $\GKdim A = \GKdim B$; call this common value $d$.
\begin{enumerate}[{\normalfont (1)},leftmargin=*,topsep=0pt,itemsep=0pt]
\item The following are equivalent:
\begin{enumerate}[{\normalfont (i)},leftmargin=20pt,topsep=0pt,itemsep=0pt]
\item The Auslander map for $(A,H)$ is an isomorphism;
\item The Auslander map for $(B,K)$ is an isomorphism; and
\item $\GKdim \Lambda/\langle e_0 \rangle \leqslant d-2$.
\end{enumerate}
\item $A^H \cong B^K$.
\end{enumerate}
\end{thm}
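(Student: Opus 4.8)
The plan is to derive everything from Corollary \ref{austhmcor} and Theorem \ref{pathalgthm}, applied to each of the two pairs in turn; the statement is essentially a formal consequence of the machinery already set up. First I would dispose of the preliminary assertion that $\GKdim A = \GKdim B$. Since $A \hash H$ is free of finite rank $\dim_\Bbbk H$ as a left $A$-module, one has $\GKdim (A \hash H) = \GKdim A$. By Proposition \ref{moritaequivprop}, $\Lambda \cong e(A \hash H)e$ for the full idempotent $e = e_0 + \dots + e_n$, so the argument used in the proof of Corollary \ref{austhmcor} (two applications of \cite[Proposition 3.14]{mandr}, now with the module taken to be $A \hash H$ itself rather than $(A\hash H)/\langle e_0\rangle$) yields $\GKdim \Lambda = \GKdim (A \hash H) = \GKdim A$. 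The same reasoning applied to $(B,K)$ gives $\GKdim \Lambda = \GKdim B$, whence $\GKdim A = \GKdim \Lambda = \GKdim B =: d$, as claimed.

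For part (1), I would invoke Corollary \ref{austhmcor} directly: applied to $(A,H)$ it says the Auslander map for $(A,H)$ is an isomorphism if and only if $\GKdim \Lambda/\langle e_0 \rangle \leqslant \GKdim A - 2 = d - 2$, which is precisely the equivalence (i) $\Leftrightarrow$ (iii). Applied to $(B,K)$ it gives (ii) $\Leftrightarrow$ (iii), using here that the algebra $\Lambda$ — together with its distinguished vertex $0$, for which $e_0 = 1 \hash t$ with $t$ an integral as in Theorem \ref{introbhz} — is one and the same for both pairs. Chaining the two equivalences establishes (1).

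For part (2), Theorem \ref{pathalgthm} applied to $(A,H)$ gives $A^H \cong e_0 \Lambda e_0$, where $e_0$ is the vertex idempotent at the distinguished vertex $0$ of the McKay quiver; applying the theorem to $(B,K)$ gives $B^K \cong e_0 \Lambda e_0$ for the same idempotent in the same algebra $\Lambda$. Hence $A^H \cong e_0 \Lambda e_0 \cong B^K$.

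The only genuinely delicate point, which I would make explicit, is the bookkeeping behind the word \emph{equal}: I would clarify that the hypothesis is that the quivers-with-relations produced by Theorem \ref{pathalgthm} for the two pairs coincide as path algebras of quivers with relations \emph{including the identification of the distinguished vertex $0$}, so that the expressions $\GKdim \Lambda/\langle e_0 \rangle$ and $e_0 \Lambda e_0$ are unambiguous and the argument above goes through; without this, different choices of distinguished vertex could in principle give non-isomorphic corner rings. Everything else is an immediate appeal to results already proved. (If one knows only that the two algebras are isomorphic rather than literally equal, Remark \ref{orbit} permits adjusting the representatives in $e(A\hash H)e$ for the arrows of $Q$ so as to upgrade the isomorphism to an equality, after which the reasoning applies verbatim.)
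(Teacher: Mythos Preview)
Your proposal is correct and follows essentially the same approach as the paper: invoke Corollary \ref{austhmcor} for each pair to get (i)$\Leftrightarrow$(iii) and (ii)$\Leftrightarrow$(iii), and use $A^H \cong e_0 \Lambda e_0 \cong B^K$ from Theorem \ref{pathalgthm} for part (2). You are in fact more careful than the paper, which treats the equality $\GKdim A = \GKdim B$ as self-evident and does not address the bookkeeping about the distinguished vertex $0$; your justification of the former via $\GKdim(A\hash H)=\GKdim A$ and the Morita-equivalence argument from the proof of Corollary \ref{austhmcor} is a nice addition.
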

\begin{proof}
\begin{enumerate}[{\normalfont (1)},wide=0pt,topsep=0pt,itemsep=0pt]
\item By Corollary \ref{austhmcor}, (i) and (iii) are equivalent, and (ii) and (iii) are equivalent by the same reasoning.
\item $A^H \cong e_0 \Lambda e_0 \cong B^K$. \qedhere
\end{enumerate}
\end{proof}

We will apply this result a number of times in Section \ref{examplessec}.

\subsection{$\Lambda$ is a mesh algebra when $A$ is two-dimensional}
Suppose the pair $(A,H)$ satisfies Hypothesis \ref{mainhypothesis}, and moreover assume that $A$ is a two-dimensional AS regular algebra. Throughout, fix an algebra $\Lambda$ obtained from Theorem \ref{pathalgthm}. We first recall a definition from \cite{reyesrog}, with some minor modifications for notational consistency:

\begin{defn} \label{meshdef}
Let $Q$ be a finite quiver with vertex set $\{0, \dots, n \}$. Let $\mathcal{A}$ be the set of arrows of $Q$, which is therefore a basis for $U \coloneqq (\Bbbk Q)_1$. Suppose there is a permutation $\tau$ of the vertex set (and hence of the vertex idempotents) and a bijective linear map $\sigma : U \to U$ such that $\sigma (e_i U e_j) = e_{\tau(j)} U e_i$. Then
\begin{align*}
\Phi \coloneqq \sum_{\alpha \in \mathcal{A}} \sigma(\alpha) \alpha 
\end{align*}
is a $\tau^{-1}$-twisted weak potential in $\Bbbk Q$. If we let $\Phi_i = \Phi e_i = e_{\tau(i)} \Phi e_i$, then the algebra
\begin{align*}
A_2(Q,\sigma) \coloneqq \Bbbk Q/\langle \Phi \rangle = \Bbbk Q/\langle \Phi_0, \dots, \Phi_n \rangle
\end{align*}
is called a \emph{mesh algebra}, and $\Phi_i$ is called a \emph{mesh relation}.
\end{defn}

By Theorem \cite[Theorem 4.1]{rrz}, the algebra $A \hash H$ is \emph{twisted Calabi-Yau} (called skew Calabi-Yau in [loc.\ cit.]), and therefore the same is true of the Morita equivalent algebra $\Lambda$. By \cite[Proposition 7.1]{reyesrog}, it follows that $\Lambda$ is a \emph{mesh algebra}, and in our setting the permutation $\tau$ satisfies $\Bbbk \sfw \otimes V_i \cong V_{\tau(i)}$. \\
\indent When the permutation $\tau$ is the identity, under mild assumptions on the quiver $Q$, mesh algebras are quite restricted:

\begin{lem} \label{meshalgebralemma}
Assume that $\Lambda$ is a mesh algebra, where the map $\tau$ is the identity. Assume that the underlying quiver $Q$ of $\Lambda$ is the double of a quiver $Q'$, where the underlying graph of $Q'$ is a tree. Then 
\begin{align*}
\Lambda \cong \Pi(Q'),
\end{align*}
where $\Pi(Q')$ is the preprojective algebra of $Q'$ (see Definition \ref{preprojdef}).
\end{lem}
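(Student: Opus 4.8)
The plan is to compare the mesh relations of $\Lambda$ with the preprojective relations of $\Pi(Q')$ and to show that, after rescaling the arrows by a suitable graded automorphism of $\Bbbk Q$, the two sets of relations coincide. Since $\tau = \id$, the twisted weak potential $\Phi = \sum_{\alpha \in \mathcal{A}} \sigma(\alpha)\alpha$ is in fact an (untwisted) weak potential, and for each vertex $i$ the mesh relation is $\Phi_i = e_i \Phi e_i = \sum_{\alpha\colon t(\alpha)=i} \sigma(\alpha)\alpha$, a sum over all arrows starting at $i$. Because $Q = \overline{Q'}$ is the double of $Q'$, the arrows splitting at $i$ are exactly the $\alpha$ and $\overline{\alpha}$ for $\alpha \in Q'$ incident to $i$, so $\Phi_i$ is already a $\Bbbk$-linear combination of the terms $\alpha\overline{\alpha}$ and $\overline{\alpha}\alpha$ appearing in the preprojective relation at $i$; the content of the lemma is that the coefficients can be normalised to $\pm 1$ with opposite signs, matching Definition \ref{preprojdef}.

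First I would record the constraints that the bijection $\sigma\colon U \to U$ with $\sigma(e_i U e_j) = e_j U e_i$ (using $\tau = \id$) imposes: it interchanges the one-dimensional spaces $e_i U e_j$ and $e_j U e_i$ spanned by $\alpha$ and $\overline\alpha$, so $\sigma(\alpha) = c_\alpha \overline{\alpha}$ and $\sigma(\overline\alpha) = c'_\alpha \alpha$ for nonzero scalars $c_\alpha, c'_\alpha$. The condition that $\Phi$ is a weak potential — equivalently, that the relations generate an ideal making $\Lambda$ twisted Calabi-Yau of the expected dimension, which we get for free from \cite[Theorem 4.1]{rrz} and \cite[Proposition 7.1]{reyesrog} — forces a compatibility between $c_\alpha$ and $c'_\alpha$; concretely, applying the potential condition $\llbracket\psi,\Phi\rrbracket = \llbracket\Phi,\psi\rrbracket$ (Definition \ref{superpotentialdefn2}(1), with $\tau=\id$) to the dual basis element of each arrow shows $c'_\alpha = -c_\alpha$ (the sign being the usual one coming from the cyclic structure of a length-$2$ potential). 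Thus $\Phi_i = \sum_{\alpha\colon t(\alpha)=i} c_\alpha\,\alpha\overline{\alpha} \;-\; \sum_{\alpha\colon h(\alpha)=i} c_\alpha\,\overline{\alpha}\alpha$ up to reindexing, which already has the shape of a preprojective relation with possibly non-unit coefficients.

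Next I would use the hypothesis that the underlying graph of $Q'$ is a tree to rescale the arrows so that all $c_\alpha$ become $1$. Define a graded automorphism of $\Bbbk Q$ fixing the vertices and sending $\alpha \mapsto \lambda_\alpha \alpha$, $\overline\alpha \mapsto \mu_\alpha \overline\alpha$; this transforms the coefficient $c_\alpha$ of $\alpha\overline\alpha$ into $\lambda_\alpha\mu_\alpha c_\alpha$. Since we only need to solve $\lambda_\alpha \mu_\alpha = c_\alpha^{-1}$ for each edge of $Q'$ independently (one equation in two unknowns per edge, with no global obstruction because there are no cycles in the tree — in fact any spanning-tree argument suffices, but here $Q'$ itself is a tree), such $\lambda_\alpha, \mu_\alpha$ exist. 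After this change of variables the mesh relations become exactly $\sum_{\alpha\colon t(\alpha)=i}\alpha\overline\alpha - \sum_{\alpha\colon h(\alpha)=i}\overline\alpha\alpha$, i.e.\ the defining relations of $\Pi(Q')$, and since a graded algebra automorphism of $\Bbbk Q$ carries $\langle \Phi \rangle$ isomorphically onto the preprojective ideal, it descends to an isomorphism $\Lambda \cong \Pi(Q')$. (This is the same mechanism noted in Remark \ref{orbit}: twisted superpotentials in one $\Autgr(\Bbbk Q)$-orbit give isomorphic derivation-quotient algebras.)

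The main obstacle is pinning down the sign relation $c'_\alpha = -c_\alpha$ and, more generally, being careful that the "mesh relation" convention of Definition \ref{meshdef} and the "preprojective" convention of Definition \ref{preprojdef} agree once $\tau = \id$ — these differ only by the sign convention that \cite{bsw} builds into its definition of (twisted) superpotential, as flagged in the Remark after Definition \ref{superpotentialdefn1}, so the bookkeeping must be done explicitly rather than waved at. Everything else (the structure of arrows in a double quiver, solving the rescaling equations on a tree, transporting the ideal along an automorphism) is routine.
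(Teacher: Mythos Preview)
Your argument has a genuine gap: the claim $c'_\alpha = -c_\alpha$ is not justified, and in fact it is false in the generality of the lemma. Definition~\ref{meshdef} only asserts that $\Phi$ is a \emph{weak} potential, so you cannot invoke the superpotential identity $\llbracket\psi,\Phi\rrbracket = \llbracket\Phi,\psi\rrbracket$. Even in the situations coming from $A \hash H$ where $\Phi$ \emph{is} a twisted superpotential, the hypothesis $\tau = \id$ only says the twist $\nu$ is trivial on vertices; the arrow part of $\nu$ (coming from the $\sigma$ in the superpotential for $A$) can still be nontrivial. Example~\ref{bgexamples} exhibits exactly this: there $\tau = \id$, yet the mesh relations are $\alpha_i\overline{\alpha}_i - q\,\overline{\alpha}_{i-1}\alpha_{i-1}$, so $c'_\alpha = 1$ and $c_\alpha = -q$, giving $c'_\alpha/c_\alpha = -1/q \neq -1$ for $q \neq 1$. (That example has a cycle rather than a tree, but your derivation of $c'_\alpha = -c_\alpha$ makes no use of the tree hypothesis, so it would have to apply there too.)

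This gap also explains why the tree hypothesis is idle in your argument: once you assume $c'_\alpha = -c_\alpha$, the equations $\lambda_\alpha\mu_\alpha = c_\alpha^{-1}$ are independent edge by edge and no tree is needed. The paper proceeds differently. It allows arbitrary nonzero $\lambda_\alpha,\mu_\alpha$ (your $c_\alpha,c'_\alpha$) and observes that rescaling arrows alone cannot change the ratio $\mu_\alpha/\lambda_\alpha$, since both pick up the same factor $s_\alpha t_\alpha$. The extra freedom used is that each relation $\Phi_i$ may be rescaled by an independent nonzero scalar $r_i$ without changing the ideal. One is then led to the system $r_{t(\alpha)}/r_{h(\alpha)} = -\mu_\alpha/\lambda_\alpha$ for each edge $\alpha$ of $Q'$, which is solvable precisely because $Q'$ has no cycles; this is the ``simple adaptation of \cite[Lemma 7.1.2]{simon}'' the paper invokes. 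After solving for the $r_i$, the product $s_\alpha t_\alpha$ is determined on each edge and the relations become the preprojective ones.
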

\begin{proof}
The hypotheses on $\Lambda$ tell us that arrows come in opposed pairs; label them $\alpha : i \to j$, $\overline{\alpha} : j \to i$ in some order. In particular, if there is an arrow $i \to j$, then there is exactly one arrow $j \to i$, so if we apply the map $\sigma$ from Definition \ref{meshdef} to such a pair of arrows, then
\begin{align*}
\sigma(\alpha) = \lambda_\alpha \overline{\alpha}, \quad \sigma(\overline{\alpha}) = \mu_\alpha \alpha 
\end{align*}
for some $\lambda_\alpha, \mu_\alpha \in \Bbbk^\times$. Therefore the relation at vertex $i$ is of the form
\begin{align*}
\sum_{\alpha : t(\alpha) = i} \mu_\alpha \alpha \overline{\alpha} \hspace{5pt} + \hspace{-3pt} \sum_{\alpha : h(\alpha) = i} \lambda_\alpha \overline{\alpha} \alpha.
\end{align*}
Since the underlying graph of $Q$ is a tree, a simple adaptation of the proof of \cite[Lemma 7.1.2]{simon} shows that we can scale the arrows so that $\mu_\alpha = 1$ and $\lambda_\alpha = -1$ for all $\alpha$. These are precisely the relations in a preprojective algebra so $\Lambda \cong \Pi(Q)$, as claimed.
\end{proof}


The conditions of the above lemma are satisfied in a number of examples that we will consider in Section \ref{examplessec}. In particular, we only need to know the McKay quiver of the action of $H$ on $A$ to find $\Lambda$, up to isomorphism. We note that the condition that the underlying graph of $Q$ is a tree is crucial; for example, the relations obtained in Example \ref{bgexamples} cannot be written as preprojective relations.

\subsection{Maximal Cohen-Macaulay modules in dimension 2} \label{mcmsec}
\indent Suppose that the pair $(A,H)$ satisfies Hypothesis \ref{mainhypothesis} with $A$ two-dimensional, and assume that the Auslander map for this pair is an isomorphism. Let $\Lambda$ be an algebra from Theorem \ref{pathalgthm}. We now show how the maximal Cohen-Macaulay $A^H$-modules can be constructed using $\Lambda$. \\ 
\indent By Theorem \ref{mcmmodules}, if $V$ is an irreducible $H$-module, then $P_V \coloneqq A \otimes V$ is a projective $A \hash H$-module, and $M_V \coloneqq (A \otimes V)^H$ is a maximal Cohen-Macaulay $A^H$-module. If we have enumerated the irreducible representations of $H$ as $V_0, \dots, V_n$, we then write $P_i \coloneqq P_{V_i}$ and $M_i \coloneqq M_{V_i}$. It then follows that the modules $P_i[j]$ and $M_i[j]$, where $0 \leqslant i \leqslant n$ and $j \in \ZZ$, is a complete list of isomorphism classes of indecomposable graded projective $(A \hash H)$-modules and indecomposable graded maximal Cohen-Macaulay $A^H$-modules, respectively.

\begin{lem} \label{identifyMCM}
Assume the above setup. 
\begin{enumerate}[{\normalfont (1)},leftmargin=*,topsep=0pt,itemsep=0pt]
\item The maps 
\begin{align*}
A^H \to e_0(A \hash H)e_0, \quad b \mapsto e_0(b \hash 1)e_0, \hspace{30pt} e_0(A \hash H)e_0 \to A^H, \quad e_0(a \hash h)e_0 \mapsto \varepsilon(h) e_0 \cdot a
\end{align*}
are mutually inverse isomorphisms of graded algebras.
\item There is an isomorphism of left $A^H$-modules $M_i \cong e_0 (A \hash H) e_i$.
\end{enumerate}
\end{lem}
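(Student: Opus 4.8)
\emph{Part (1).} The plan is to first identify $e_0$ with $1 \hash t$, where $t \in H$ is the normalised two-sided integral (so $\varepsilon(t) = 1$). This holds because $\dim V_0 = 1$: the $V_0$-block of the Artin--Wedderburn decomposition of $H$ is one-dimensional, its identity is the unique idempotent with $H e_0 \cong V_0$, and this idempotent acts as the identity on $V_0$ and as zero on every other simple module, which is precisely the action of $t$. (This identification is already used in the proof of Corollary \ref{austhmcor}.) With this in hand, the single computation to record is
\[ e_0(a \hash h)e_0 = \varepsilon(h)\,(t \cdot a) \hash t, \qquad \text{for all } a \in A \text{ and } h \in H, \]
obtained by expanding $(1 \hash t)(a \hash h)(1 \hash t)$, using $ht = \varepsilon(h)t$, and then using the identity $t_{(1)} \otimes t_{(2)} t = t \otimes t$ (which follows from Lemma \ref{integrallemma}(1), since the linear maps $k \mapsto kt$ and $k \mapsto \varepsilon(k)t$ coincide) with the first leg acting on $a$. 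From this formula: the second map lands in $A^H$ (as $t \cdot a \in A^H$ always) and is well-defined (as $c \mapsto c \hash t$ is injective on $A$); the first map lands in $e_0(A \hash H)e_0$, since for $b \in A^H$ one has $(1 \hash t)(b \hash 1) = (t_{(1)} \cdot b) \hash t_{(2)} = b \hash t = (b \hash 1)(1 \hash t)$, whence $e_0(b \hash 1)e_0 = b \hash t$; the first map is multiplicative because $(b \hash t)(b' \hash t) = bb' \hash t$ (using $h \cdot b' = \varepsilon(h)b'$ and $t^2 = t$), and it is degree-preserving since $H$ sits in degree $0$. Finally both composites are the identity: $b \mapsto b \hash t \mapsto \varepsilon(1)(t \cdot b) = b$ in one direction, and in the other $e_0(a \hash h)e_0 \mapsto \varepsilon(h)(t \cdot a) \mapsto \varepsilon(1)\bigl(t \cdot (\varepsilon(h)\, t \cdot a)\bigr) \hash t = \varepsilon(h)(t \cdot a) \hash t = e_0(a \hash h)e_0$, using $t^2 = t$ and the displayed formula. (Alternatively, part (1) is exactly \cite[4.3.4 Lemma]{montgomery2} with the isomorphism and its inverse written out, so one may instead cite that reference and check that the maps match.)

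\emph{Part (2).} Here I would assemble two standard facts. First, for any left $A \hash H$-module $M$ there is an equality $M^H = e_0 M$: if $m \in M^H$ then $(1 \hash t)m = m$, by applying the linear map $h \mapsto (1 \hash h)m$, which equals $h \mapsto \varepsilon(h)m$, to $t$; while if $m = (1 \hash t)m$ then $(1 \hash h)m = (1 \hash ht)m = \varepsilon(h)m$. Moreover $(1 \hash h)(b \hash 1) = (b \hash 1)(1 \hash h)$ for $b \in A^H$, so the $A^H$-action $b \cdot m = (b \hash 1)m$ on $M^H$ coming from Theorem \ref{mcmmodules} agrees on $e_0 M$ with left multiplication by $e_0(b \hash 1)e_0 = b \hash t$, i.e.\ with the $e_0(A\hash H)e_0$-module structure transported along part (1). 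Second, $P_i = A \otimes V_i \cong (A \hash H)e_i$ as left $A \hash H$-modules: fixing $v_i \in V_i$ with $He_i \cong V_i$ via $he_i \mapsto h \cdot v_i$, the assignment $a \otimes (h \cdot v_i) \mapsto (a \hash h)e_i$ is well-defined (the kernel of $H \to V_i$, $h \mapsto h\cdot v_i$, is $H(1 - e_i)$) and is an $A \hash H$-module isomorphism, because $(A \hash H)e_i = A \otimes He_i$ (right multiplication by $e_i \in H$ only affects the $H$-tensorand). Combining these, $M_i = P_i^H = e_0 P_i \cong e_0(A \hash H)e_i$ as left $A^H$-modules.

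\emph{Main obstacle.} There is no real obstacle here: the argument is essentially bookkeeping. The point needing most care is the compatibility of module structures in part (2), i.e.\ verifying that the $A^H$-action of Theorem \ref{mcmmodules} is carried to the evident left $e_0(A \hash H)e_0$-action on $e_0(A \hash H)e_i$ under the identifications of part (1) and the isomorphism $P_i \cong (A\hash H)e_i$. The only place where semisimplicity of $H$ is genuinely used --- through $\varepsilon(t) = 1$, $t^2 = t$, and $\dim V_0 = 1$ (which forces $e_0 = 1 \hash t$) --- is in the identities $t_{(1)} \otimes t_{(2)} t = t \otimes t$ and $M^H = e_0 M$.
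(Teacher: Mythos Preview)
Your proof is correct and essentially matches the paper's argument. For part (1) the paper simply cites \cite[4.3.4 Lemma]{montgomery2}; you have unpacked the content of that reference, which is fine (and you noted the citation as an alternative). For part (2) the paper argues at the level of elements: it identifies $A \otimes He_i$ with $(A \hash H)e_i$ implicitly, then checks the two inclusions $M_i \subseteq e_0(A \hash H)e_i$ (an element of $M_i$ is fixed by left multiplication by $e_0$, since $\varepsilon(e_0)=1$) and $e_0(A \hash H)e_i \subseteq M_i$ (left multiplication by $k \in H$ on $e_0(a \hash he_i)$ gives $\varepsilon(k)$ times the element, since $ke_0 = \varepsilon(k)e_0$). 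Your version factors this through the general identity $M^H = e_0 M$ and the isomorphism $P_i \cong (A \hash H)e_i$, which is the same computation packaged more abstractly; your extra care in checking that the $A^H$-module structures agree is a point the paper leaves implicit.
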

\begin{proof}
\begin{enumerate}[{\normalfont (1)},wide=0pt,topsep=0pt,itemsep=0pt]
\item This is \cite[4.3.4 Lemma]{montgomery2}.
\item First note that, since $V_i \cong H e_i$, we can write $M_i = (A \otimes H e_i)^H$. Let $\sum_{a,h} a \otimes he_i \in M_i$. Then 
\begin{align*}
e_0 (A \hash H) e_i \ni e_0 \bigg(\sum_{a,h} a \otimes h\bigg) e_i = e_0 \sum_{a,h} a \otimes h e_i = \varepsilon(e_0) \sum_{a,h} a \otimes h e_i = \sum_{a,h} a \otimes h e_i.
\end{align*}
Conversely, let $a \hash h \in A \hash H$. Then $e_0 (a \hash h) e_i = e_0 (a \hash he_i) \in A \otimes V_i$ and, for any $k \in H$, we have
\begin{align*}
k \cdot e_0 (a \hash he_i) = (k e_0) (a \hash he_i) = \varepsilon(k) e_0 (a \hash he_i),
\end{align*}
so $e_0 (a \hash h) e_i \in (A \otimes V_i)^H = M_i$. \qedhere
\end{enumerate}
\end{proof}

Now consider the quiver algebra $\Lambda = e (A \hash H) e$, with its subalgebra $e_0 \Lambda e_0 \cong A^H$. By Lemma \ref{identifyMCM} (2), we can view the indecomposable MCM $A^H$-modules as the modules $e_0 \Lambda e_i$. In terms of the quiver, this means that the indecomposable MCM $e_0 \Lambda e_0$-module $M_i = e_0 \Lambda e_i$ has a $\Bbbk$-basis consisting of paths starting at the vertex $0$ and ending at the vertex $i$. In forthcoming work, we will use this perspective to study the Auslander--Reiten theory of $A^H$.

\section{Examples} \label{examplessec}

In this section, we compute some examples which illustrate how Theorem \ref{pathalgthm} can be applied. The main difficulty in writing down a presentation for $\Lambda$ is determining the twisted superpotential $\Phi \in \Bbbk Q$. Fortunately, in many of the cases of interest to us, $\Phi$ can be computed with very little effort. \\
\indent We begin with an example where a nontrivial semisimple Hopf algebra acts on a two-dimensional AS regular algebra.

\begin{example}
Let $H$ be the Kac-Palyutkin Hopf algebra, which has presentation
\begin{align*}
H = \frac{\Bbbk \langle x,y,z \rangle}{\left\langle 
\begin{array}{c}
x^2 = 1 = y^2, \hspace{3pt} xy = yx, \\
zx = yz, \hspace{5pt} zy = xz, \\
z^2 = \frac{1}{2}(1+x+y-xy)
\end{array}\right \rangle},
\end{align*}
where the comultiplication and counit are defined as follows,
\begin{alignat*}{2}
& \Delta(x) = x \otimes x, \quad && \varepsilon(x) = 1, \\
& \Delta(y) = y \otimes y, \quad && \varepsilon(y) = 1,  \\
& \Delta(z) = \tfrac{1}{2}(1 \otimes 1 + x \otimes 1 + 1 \otimes y - x \otimes y)(z \otimes z), \hspace{15pt} && \varepsilon(z) = 1, 
\end{alignat*}
and the antipode satisfies $S = \id_H$. This is an 8-dimensional semisimple Hopf algebra which is isomorphic to neither a group algebra nor the dual of one. By \cite[Section 2]{three}, $H$ has four one-dimensional representations and one two-dimensional representation which we now describe. Letting $\omega \in \Bbbk$ be a primitive 4th root of unity, the five irreducible representations have the following matrix representations:
\begin{alignat*}{4}
\rho_{\alpha,\beta,\gamma} &: H \to \Bbbk, \qquad && x \mapsto \alpha, \quad && y \mapsto \beta, \quad && z \mapsto \gamma, \\
\rho &: H \to \text{Mat}_{2}(\Bbbk), \qquad 
&& x \mapsto \begin{pmatrix}
-1 & 0 \\ 0 & 1
\end{pmatrix}, \quad 
&& y \mapsto \begin{pmatrix}
1 & 0 \\ 0 & -1
\end{pmatrix}, \quad
&& z \mapsto \begin{pmatrix}
0 & 1 \\ 1 & 0
\end{pmatrix},
\end{alignat*}
where $(\alpha,\beta,\gamma)$ is one of $(1,1,1)$ (the trivial representation), $(1,1,-1)$, $(-1,-1,\omega)$, $(-1,-1,-\omega)$. In order, let $V_0, V_1, V_2,$ and $V_3$ be $H$-modules such that the associated matrix representation is the corresponding $\rho_{\alpha,\beta,\gamma}$, and suppose $V_i = \sspan\{ e_i \}$. Also let $V_4 = \sspan\{ e_4, f_4 \}$ be a two-dimensional $H$-module with associated matrix representation $\rho$. \\
\indent We now want to consider actions of $H$ on two-dimensional AS regular algebras. Let $V = \sspan\{u,v\}$ be a two-dimensional $H$-module whose associated matrix representation is also $\rho$, and form the tensor algebra $T_\Bbbk(V)$. By \cite[Section 2]{three}, if we set $\sfw = u^2 + v^2$ and $\sfw' = u^2-v^2$, then the AS regular algebras $A = \scrD(\sfw,0)$ and $B = \scrD(\sfw',0)$ become $H$-module algebras which satisfy Hypothesis \ref{mainhypothesis}. We now determine the algebra $\Lambda$ from Theorem \ref{pathalgthm} in both of these cases. \\
\indent We first compute the McKay quivers. Since the McKay quiver only depends on the action of $H$ on $A_1 = B_1$, we obtain the same McKay quiver in each case. By \cite[Theorem 3.5]{three}, we have
\begin{align*}
V \otimes V_0 \cong V_0, \quad V \otimes V_1 \cong V_1,\quad  V \otimes V_2 \cong V_2, \quad V \otimes V_3 \cong V_3, \quad V \otimes V_4 \cong V_0 \oplus V_1 \oplus V_2 \oplus V_3,
\end{align*}
and so the McKay quiver $Q$ is as follows:
\begin{center}
\begin{tikzpicture}[->,>=stealth,thick,scale=1]

\node[circle,minimum size=0.5cm] (0) at (-1,-1) {};
\node[circle,minimum size=0.5cm] (1) at (-1,1) {};
\node[circle,minimum size=0.5cm] (2) at (1,1) {};
\node[circle,minimum size=0.5cm] (3) at (1,-1) {};
\node[circle,minimum size=0.5cm] (4) at (0,0) {};

\node at (-1,-1) {$0$};
\node at (-1,1) {$1$};
\node at (1,1) {$2$};
\node at (1,-1) {$3$};
\node at (0,0) {$4$};

\def \wiggle {20};
\draw (0.{45+\wiggle}) to (4.{225-\wiggle});
\draw (4.{225+\wiggle}) to (0.{45-\wiggle});
\draw (1.{-45+\wiggle}) to (4.{135-\wiggle});
\draw (4.{135+\wiggle}) to (1.{-45-\wiggle});
\draw (2.{225+\wiggle}) to (4.{45-\wiggle});
\draw (4.{45+\wiggle}) to (2.{225-\wiggle});
\draw (3.{135+\wiggle}) to (4.{-45-\wiggle});
\draw (4.{-45+\wiggle}) to (3.{135-\wiggle});

\node at (-0.75,-0.3) {$a$};
\node at (-0.75,0.3) {$B$};
\node at (0.75,0.3) {$c$};
\node at (0.75,-0.3) {$D$};

\node at (-0.3,-0.75) {$A$};
\node at (-0.3,0.75) {$b$};
\node at (0.3,0.75) {$C$};
\node at (0.3,-0.75) {$d$};

\end{tikzpicture}
\end{center}
By the results of Section \ref{smashisderquot}, $T_\Bbbk(V) \hash H$ is Morita equivalent to the path algebra of $Q$. \\
\indent We follow the recipe from Remark \ref{altmethod} to determine the superpotential $\Phi$ so that $A \hash H$ and $\Lambda \coloneqq \scrD(\Phi,0)$ are Morita equivalent. For each arrow $\alpha : i \to j$ we need to write down an $H$-module morphism $\xi_\alpha : V_j \to V^* \otimes V_i$. Since the antipode is the identity, the action of $H$ on the dual basis $\{u^*,v^*\}$ of $V^*$ is the same as the action on the basis $\{u,v\}$ of $V$. It is then straightforward to verify that possible choices for the maps $\xi_\alpha$ are as follows:
\begin{alignat*}{2}
\xi_a &: V_4 \to V^* \otimes V_0, \qquad && e_4 \mapsto u^* \otimes e_0, \quad f_4 \mapsto v^* \otimes e_0, \\
\xi_A &: V_0 \to V^* \otimes V_4, \qquad && e_0 \mapsto u^* \otimes e_4 + v^* \otimes f_4, \\
\xi_b &: V_4 \to V^* \otimes V_1, \qquad && e_4 \mapsto u^* \otimes e_1, \quad f_4 \mapsto -v^* \otimes e_1, \\
\xi_B &: V_1 \to V^* \otimes V_4, \qquad && e_1 \mapsto u^* \otimes e_4 - v^* \otimes f_4, \\
\xi_c &: V_4 \to V^* \otimes V_2, \qquad && e_4 \mapsto v^* \otimes e_2, \quad f_4 \mapsto \omega u^* \otimes e_2, \\
\xi_C &: V_2 \to V^* \otimes V_4, \qquad && e_2 \mapsto v^* \otimes e_4 - \omega u^* \otimes f_4, \\
\xi_d &: V_4 \to V^* \otimes V_3, \qquad && e_4 \mapsto v^* \otimes e_3, \quad f_4 \mapsto - \omega u^* \otimes e_3, \\
\xi_D &: V_3 \to V^* \otimes V_4, \qquad && e_3 \mapsto v^* \otimes e_4 + \omega u^* \otimes f_4.
\end{alignat*}
\indent It is easy to see that $x \cdot u^2 = u^2$ and $y \cdot v^2 = v^2$, and direct calculation using the coproduct shows that
\begin{align}
z \cdot u^2 = v^2, \quad z \cdot v^2 = u^2. \label{zaction}
\end{align}
Therefore $\Bbbk \sfw \cong V_0$, so the $H$-action on $A$ has trivial homological determinant. This means that the permutation $\tau$ of the vertices, defined by $\Bbbk \sfw \otimes V_i \cong V_{\tau(i)}$, is the identity, so $\Phi$ is a linear combination of paths of length $\ell = 2$ which start and end at the same vertex. To determine the coefficients of the terms in $\Phi$, we use (\ref{Phicoeffs}); for example, the coefficient of $aA$ can be calculated from the following composition:
\begin{gather*}
\arraycolsep=2pt
\begin{array}{ >{\centering\arraybackslash$} p{10pt} <{$} 
>{\centering\arraybackslash$} p{8pt} <{$} 
>{\centering\arraybackslash$} p{35pt} <{$} 
>{\centering\arraybackslash$} p{27pt} <{$} 
>{\centering\arraybackslash$} p{90pt} <{$} 
>{\centering\arraybackslash$} p{27pt} <{$} 
>{\centering\arraybackslash$} p{110pt} <{$}
>{\centering\arraybackslash$} p{20pt} <{$} 
>{\centering\arraybackslash$} p{14pt} <{$} }
V_0 & \to & V^{\otimes 2} \otimes V_0 & \xrightarrow{\id \otimes \xi_A} & V^{\otimes 2} \otimes V^* \otimes V_4 & \xrightarrow{\id \otimes \xi_a} & V^{\otimes 2} \otimes (V^*)^{\otimes 2} \otimes V_0 & \xrightarrow{\text{eval}} & V_0 \\
e_0 & \mapsto & \sfw \otimes e_0 & \xmapsto{\phantom{\id \otimes \xi_A}} & \sfw \otimes (u^* \otimes e_4 + v^* \otimes f_4) & \xmapsto{\phantom{\id \otimes \xi_a}} & \sfw \otimes (u^* \otimes u^* + v^* \otimes v^*) \otimes e_0 & \xmapsto{\phantom{\text{eval}}} & 2 e_0
\end{array}
\end{gather*}
so the coefficient of $aA$ in $\Phi$ is $2$. Similarly, to compute the coefficient of $Aa$, we consider the following:
\begin{gather*}
\arraycolsep=2pt
\begin{array}{ >{\centering\arraybackslash$} p{10pt} <{$} 
>{\centering\arraybackslash$} p{8pt} <{$} 
>{\centering\arraybackslash$} p{35pt} <{$} 
>{\centering\arraybackslash$} p{27pt} <{$} 
>{\centering\arraybackslash$} p{70pt} <{$} 
>{\centering\arraybackslash$} p{27pt} <{$} 
>{\centering\arraybackslash$} p{110pt} <{$}
>{\centering\arraybackslash$} p{20pt} <{$} 
>{\centering\arraybackslash$} p{14pt} <{$} }
V_4 & \to & V^{\otimes 2} \otimes V_0 & \xrightarrow{\id \otimes \xi_a} & V^{\otimes 2} \otimes V^* \otimes V_0 & \xrightarrow{\id \otimes \xi_A} & V^{\otimes 2} \otimes (V^*)^{\otimes 2} \otimes V_4 & \xrightarrow{\text{eval}} & V_4 \\
e_4 & \mapsto & \sfw \otimes e_4 & \xmapsto{\phantom{\id \otimes \xi_a}} & \sfw \otimes u^* \otimes e_0 & \xmapsto{\phantom{\id \otimes \xi_A}} & \sfw \otimes u^* \otimes (u^* \otimes e_4 + v^* \otimes f_4) & \xmapsto{\phantom{\text{eval}}} & e_4
\end{array}
\end{gather*}
which tells us that the coefficient of $Aa$ in $\Phi$ is $1$. Repeating this process, we obtain
\begin{align*}
\Phi = 2aA + 2bB + 2cC + 2dD + Aa + Bb + Cc + Dd.
\end{align*}
Since $A = \scrD(\sfw,0)$, the relations in $\Lambda$ are obtained by formal left differentiation with respect to paths of length $0$, i.e.\ the vertices. This means that no differentiation is required, and the relations are obtained by simply pre- and post-multiplying $\Phi$ by suitable vertex idempotents. Therefore $A \hash H$ is Morita equivalent to $\Lambda$, which is the path algebra of $Q$ with the relations (after rescaling)
\begin{align*}
aA = bB = cC = dD = Aa+Bb+Cc+Dd = 0.
\end{align*}
In particular, $\Lambda$ is a preprojective algebra of a $\widetilde{\mathbb{D}}_4$ quiver. Using Theorem \ref{samelambda}, it follows that $A^H \cong e_0 \Lambda e_0 \cong \Bbbk[u,v]^G$ where $G$ is a binary dihedral group of order $8$, which explains the observation in \cite[Theorem 2.1]{three} that $A^H$ is a commutative hypersurface. \\
\indent Now consider $\sfw' = u^2-v^2$, and $B = \scrD(\sfw',0)$; we wish to determine the twisted superpotential $\Phi'$ so that $B \hash H$ and $\Lambda' = \scrD(\Phi',0)$ are Morita equivalent. Using (\ref{zaction}) we see that $\Bbbk \sfw' \cong V_1$, so the $H$-action on $B$ has nontrivial homological determinant. It is straightforward to check directly (or by using \cite[Theorem 3.5]{three}) that the functor $\Bbbk \sfw' \otimes -$ swaps $V_0$ and $V_1$, swaps $V_2$ and $V_3$, and fixes $V_4$. Therefore, when applying (\ref{Phicoeffs}), we only need to consider paths between vertices 0 and 1, between vertices 2 and 3, and from vertex 4 to itself, all of length two. In particular, one can show in this case that
\begin{align*}
\Phi' = 2aB + 2bA - 2cD - 2dC + Aa + Bb - Cc - Dd,
\end{align*}
and so $B \hash H$ is Morita equivalent to the path algebra of $Q$ modulo the relations
\begin{align*}
aB = bA = cD = dC = Aa+Bb-Cc-Dd = 0.
\end{align*}
In particular, using Theorem \ref{samelambda} and \cite[Example 5.1]{bsw}, $B^H \cong e_0 \Lambda' e_0 \cong \Bbbk[u,v]^G$, where $G$ is a dihedral group of order $8$. Since $G$ is generated by reflections, $\Bbbk[u,v]^G$ is a polynomial ring, and hence the same is true of $B^H$, which was observed in \cite[Theorem 2.1]{three}.
\end{example}

\subsection{Calculating $\Lambda$ when $H$ is the dual of a group algebra} \label{dualofgroupsec}
Suppose that $H = (\Bbbk G)^*$ is the dual of a group algebra acting homogeneously on an $m$-Koszul AS regular algebra $A = \scrD(\sfw,i)$, where $\sfw \in V^{\otimes \ell}$. The action of $H$ is equivalent to $A$ being $G$-graded. In this situation, it is relatively straightforward to describe the quiver $Q$ of $\Lambda$, as well as the superpotential $\Phi$. We now describe this process. \\
\indent We first fix some notation. Let $\{ v_1, \dots, v_r \}$ be a basis of $V \coloneqq A_1$ and let $\{ f_g \mid g \in G\}$ be the basis of $H$ which is dual to the usual basis of $\Bbbk G$. By assumption $A$ (and hence $T_\Bbbk(V)$) is $G$-graded, and since the action of $H$ is homogeneous, we may assume that $\deg_G v_i = g_i$ for some $g_i \in G$. The Artin--Wedderburn decomposition of $H$ is simply $H = \bigoplus_{g \in G} H f_g$, so it follows that the full idempotent $e$ is simply $\sum_{g \in G} f_g$, which is just $1_H$. Therefore this set of idempotents is automatically closed under the left winding automorphism ${}_{\hdet} \Xi$. The algebra $H$ has $|G|$ (one-dimensional) irreducible representations $V_g \coloneqq H f_g$, and it is straightforward to check that $V_g \otimes V_h \cong V_{gh}$. In particular, we can (and will) label the vertices of the McKay quiver $Q$ by the elements of $G$, in contrast with our usual notation $\{0,1,\dots,n\}$ for the set of vertices of $Q$. \\
\indent We now determine the arrows of the McKay quiver $Q$. We have $V \cong \bigoplus_{i=1}^r V_{g_i}$, so if $h \in G$, then we have $V \otimes V_h \cong \bigoplus_{i=1}^r V_{g_i h}$. In particular, the McKay quiver has $r |G|$ arrows, which we denote $\alpha_{g_i,h} : g_i h \to h$. In particular, to each arrow $\alpha_{g_i,h}$ we can associate a morphism in $\Hom_H(V_{g_i h}, V \otimes V_h)$; we shall choose the map
\begin{align*}
\phi_{g_i,h} : V_{g_i h} \to V \otimes V_h, \quad \phi_{g_i,h}(f_{g_i h}) = v_i \otimes f_h,
\end{align*}
which is easily checked to be an $H$-morphism. It will also be convenient to label the arrow $\alpha_{g_i,h}$ with the basis element $v_i$; we claim that these labels can be used to immediately write down the twisted superpotential $\Phi$ defining $\Lambda$. \\
\indent First recall that, to each arrow $\alpha_{g_i,h}$, we can also associate an element of $\Hom_H(V_{h}, V^* \otimes V_{g_i h})$, and this map is dual to $\phi_{g_i,h}$ in the sense of (\ref{arrowidentity2}). Noting that $V$ has dual basis $\{ v_1^*, \dots, v_r^* \}$ which satisfies $\Bbbk v_i^* \cong V_{g_i^{-1}}$, it follows that we may choose this morphism to be the element
\begin{align*}
\xi_{g_i,h} : V_{h} \to V \otimes V_{g_i h}, \quad \xi_{g_i,h}(f_{h}) = v_i^* \otimes f_{g_i h}.
\end{align*}
Also recall that we have a permutation $\tau$ of the vertices induced by the functor $\Bbbk \sfw \otimes -$, and by Corollary \ref{hdetdual}, we have $\Bbbk \sfw \cong V_{\deg_G \sfw}$. By (\ref{startendvertex}), we know that the only paths appearing with a nonzero coefficient in $\Phi$ have length $\ell$ and are of the form $\tau(i) \to i$. In the present context, that means we only have to consider paths of length $\ell$ starting at a vertex labelled by $(\deg_G \sfw) h$ and ending at a vertex labelled by $h$, where $h$ is some group element. Suppose that $p = \beta_1 \beta_2 \dots \beta_\ell$ is such a path in $Q$, say
\begin{align*}
g_{i_1} g_{i_2} \dots g_{i_\ell} h \xrightarrow{\beta_1} g_{i_2} \dots g_{i_\ell} h \xrightarrow{\beta_2} \dots \xrightarrow{\beta_{\ell-1}} g_{i_\ell} h \xrightarrow{\beta_\ell} h, 
\end{align*}
where $g_{i_1} g_{i_2} \dots g_{i_\ell} = \deg_G\sfw$, and where the basis vector associated to $\beta_j$ is $v_{i_j}$. We claim that
\begin{align*}
[\Phi]_{\beta_1 \dots \beta_\ell} = [\sfw]_{v_{i_1} \dots v_{i_\ell}},
\end{align*}
where we recycle the notation from before Proposition \ref{RLprop} to pick out components with respect to a given basis. Indeed, by (\ref{Phicoeffs}), the following composition is equal to $[\Phi]_{\beta_1 \dots \beta_\ell} \id$:
\begin{gather*}
\arraycolsep=4pt
\begin{array}{  >{\centering\arraybackslash$} p{30pt} <{$}
>{\centering\arraybackslash$} p{8pt} <{$} 
>{\centering\arraybackslash$} p{40pt} <{$} 
>{\centering\arraybackslash$} p{20pt} <{$} 
>{\centering\arraybackslash$} p{70pt} <{$} 
>{\centering\arraybackslash$} p{25pt} <{$} 
>{\centering\arraybackslash$} p{110pt} <{$}
>{\centering\arraybackslash$} p{25pt} <{$}  }
V_{(\deg_G\sfw) h} & \to & V^{\otimes \ell} \otimes V_h & \xrightarrow{\xi_{\beta_\ell}} & V^{\otimes \ell} \otimes V^* \otimes V_{g_\ell h} & \xrightarrow{\xi_{\beta_{\ell-1}}} & V^{\otimes \ell} \otimes V^* \otimes V^* \otimes V_{g_{\ell-1} g_\ell h} & \xrightarrow{\xi_{\beta_{\ell-2}}}  \\
f_{(\deg_G\sfw) h} & \mapsto & \sfw \otimes f_h & \xmapsto{\phantom{\xi_{\beta_\ell}}} & \sfw \otimes v_{i_\ell}^* \otimes f_{g_\ell h} & \xmapsto{\phantom{\xi_{\beta_{\ell-1}}}} & \sfw \otimes v_{i_{\ell}}^* \otimes v_{i_{\ell-1}}^* \otimes f_{g_{\ell-1} g_\ell h} & \xmapsto{\phantom{\xi_{\beta_{\ell-2}}}} 
\end{array}\\[5pt]
\arraycolsep=4pt
\begin{array}{ >{\centering\arraybackslash$} p{15pt} <{$} 
>{\centering\arraybackslash$} p{17pt} <{$} 
>{\centering\arraybackslash$} p{120pt} <{$} 
>{\centering\arraybackslash$} p{20pt} <{$} 
>{\centering\arraybackslash$} p{80pt} <{$}}
\dots & \xrightarrow{\xi_{\beta_1}} & V^{\otimes \ell} \otimes (V^*)^{\otimes \ell} & \xrightarrow{\text{eval}} & V_{(\deg_G\sfw) h}, \\
\dots & \xmapsto{\phantom{\xi_{\beta_1}}} & \sfw \otimes (v_{i_\ell}^* \otimes \dots \otimes v_{i_1}^*) \otimes f_{(\deg_G\sfw) h} & \xmapsto{\phantom{\text{eval}}} & [\sfw]_{v_{i_1} \dots v_{i_\ell}} f_{(\deg_G\sfw) h}.
\end{array}
\end{gather*}
(Here, we have written $\xi_{\beta_j}$ rather than $\id_V^{\otimes \ell} \otimes \id_{V^*}^{\otimes \ell - j} \otimes \xi_{\beta_j}$.) This establishes the claim. We summarise this result as a theorem:
\begin{thm} \label{dualgroupthm}
Suppose that the pair $(A,H)$ satisfies Hypothesis \ref{mainhypothesis}, where $H = (\Bbbk G)^*$; equivalently, $A$ is $G$-graded. Let $V = A_1$ have basis $\{v_1, \dots, v_r \}$, and suppose that $\deg_G v_i = g_i$ for some $g_i \in G$. Then $A \hash H$ is isomorphic to $\Lambda \coloneqq \scrD(\Phi,\ell-m)$ for some twisted superpotential $\Phi \in (\Bbbk Q)_\ell$, where the McKay quiver $Q$ and $\Phi$ are as follows:
\begin{itemize}[leftmargin=25pt,topsep=0pt,itemsep=0pt]
\item The vertices of $Q$ are labelled by the elements of $G$, and there is an arrow of the form $g_i h \to h$ for each $1 \leqslant i \leqslant r$ and each $h \in G$; adorn such an arrow with $v_i$;
\item The twisted superpotential $\Phi \in (\Bbbk Q)_\ell$ is a linear combination of paths from the vertex labelled by $(\deg_G \sfw) h$ to the vertex labelled by $h$, for each $h \in G$. In particular, if $\lambda v_{i_1} v_{i_2} \dots v_{i_\ell}$ is a monomial (with coefficient $\lambda \in \Bbbk^\times$) appearing in $\sfw$ then, for each $h \in G$, there is a unique path $p$ in $Q$ from the vertex labelled by $(\deg_G \sfw) h$ to the vertex labelled by $h$ which traverses arrows adorned with $v_{i_1}, v_{i_2}, \dots, v_{i_\ell}$ (in order), and $p$ appears in $\Phi$ with coefficient $\lambda$.
\end{itemize}
Moreover, $A^H \cong e_{1_G} \Lambda e_{1_G}$. 
\end{thm}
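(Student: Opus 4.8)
The plan is to specialise Theorem \ref{pathalgthm} to the case $H = (\Bbbk G)^*$ and to make every ingredient of that theorem explicit. First I would record the algebra structure of $H$: it is commutative, $H \cong \Bbbk^{|G|}$ as an algebra, so its primitive idempotents are exactly the dual-basis elements $f_g$, the irreducible modules are the one-dimensional modules $V_g \coloneqq H f_g$, and $V_g \otimes V_h \cong V_{gh}$. Consequently the full idempotent $e = \sum_{g \in G} f_g$ equals $1_H$, so the Morita equivalence of Theorem \ref{pathalgthm} is in fact an isomorphism $A \hash H = e(A \hash H)e \cong \Lambda$; moreover the set $\{ f_g \}$ is automatically stable under the winding automorphism ${}_{\hdet} \Xi$, which by Corollary \ref{hdetdual} merely permutes the $f_g$. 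Since the counit of $(\Bbbk G)^*$ is evaluation at $1_G$, the trivial module is $V_0 = V_{1_G}$, so the distinguished vertex idempotent is $e_0 = f_{1_G} = e_{1_G}$; thus the final assertion $A^H \cong e_{1_G} \Lambda e_{1_G}$ is precisely the statement $A^H \cong e_0 \Lambda e_0$ of Theorem \ref{pathalgthm}.

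Next I would identify the McKay quiver. Writing $V = A_1 = \bigoplus_{i=1}^r \Bbbk v_i$ with $\deg_G v_i = g_i$, so that $V \cong \bigoplus_i V_{g_i}$, one obtains $V \otimes V_h \cong \bigoplus_i V_{g_i h}$; hence by (\ref{arrowsinquiver}) the arrows into the vertex labelled $h$ are naturally indexed by $i \in \{ 1, \dots, r \}$, the $i$-th one running from $g_i h$ to $h$, and we adorn it with $v_i$. The associated $H$-module map is $\phi_{g_i, h} : V_{g_i h} \to V \otimes V_h$, $f_{g_i h} \mapsto v_i \otimes f_h$, which is checked to be $H$-linear using $\Delta(f_g) = \sum_{ab = g} f_a \otimes f_b$. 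This gives the quiver $Q$ described in the theorem.

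It then remains to compute the twisted superpotential $\Phi$. By Corollary \ref{hdetdual} we have $\Bbbk \sfw \cong V_{\deg_G \sfw}$, so the vertex permutation $\tau$ with $\Bbbk \sfw \otimes V_h \cong V_{\tau(h)}$ is $\tau(h) = (\deg_G \sfw) h$, and by (\ref{startendvertex}) the only length-$\ell$ paths occurring in $\Phi$ run from $(\deg_G \sfw) h$ to $h$. To extract the coefficient of such a path $p = \beta_1 \cdots \beta_\ell$, where $\beta_j$ is adorned by $v_{i_j}$, I would apply the recipe (\ref{Phicoeffs}) of Remark \ref{altmethod}: one may take the dual maps to be $\xi_{g_i, h} : V_h \to V^* \otimes V_{g_i h}$, $f_h \mapsto v_i^* \otimes f_{g_i h}$ --- this is $H$-linear because $\Bbbk v_i^* \cong V_{g_i^{-1}}$, and it satisfies the duality condition (\ref{arrowidentity2}) since $v_i^*(v_i) = 1$. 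Threading $f_{(\deg_G \sfw) h} \mapsto \sfw \otimes f_h$ through $\xi_{\beta_\ell}, \dots, \xi_{\beta_1}$ produces $\sfw \otimes (v_{i_\ell}^* \otimes \cdots \otimes v_{i_1}^*) \otimes f_{(\deg_G \sfw) h}$, and applying the evaluation map with the convention (\ref{tensordual}) yields $[\sfw]_{v_{i_1} \cdots v_{i_\ell}} f_{(\deg_G \sfw) h}$; hence the coefficient of $p$ in $\Phi$ equals the coefficient of the monomial $v_{i_1} \cdots v_{i_\ell}$ in $\sfw$, exactly as stated.

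The one place that genuinely requires care --- and where I expect the main obstacle to lie --- is this final computation: one must keep the conventions for $V^*$, its $G$-grading $\Bbbk v_i^* \cong V_{g_i^{-1}}$, and the evaluation pairing (\ref{tensordual}) mutually consistent along the whole chain of maps in (\ref{Phicoeffs}), so that the monomials emerge in the correct (unreversed) order and the coefficients of $\sfw$ reappear \emph{exactly}, rather than up to a reindexing or an unwanted twist. Everything else is bookkeeping that follows formally from Theorem \ref{pathalgthm} and the general apparatus developed in Section \ref{derquotsec} and earlier in this section.
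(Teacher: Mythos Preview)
Your proposal is correct and follows essentially the same approach as the paper: both specialise Theorem \ref{pathalgthm} and the recipe of Remark \ref{altmethod} to $H = (\Bbbk G)^*$, noting that the full idempotent is $1_H$ (so Morita equivalence becomes isomorphism), identifying the McKay quiver via $V \otimes V_h \cong \bigoplus_i V_{g_i h}$, and computing the coefficient of each path by threading through the explicit dual maps $\xi_{g_i,h}(f_h) = v_i^* \otimes f_{g_i h}$ in the composition (\ref{Phicoeffs}). Your anticipated ``obstacle'' about keeping the ordering conventions straight is exactly the bookkeeping the paper carries out in its displayed chain of maps, and you have handled it correctly.
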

\indent While the method for obtaining $Q$ and $\Phi$ was somewhat technical to describe, in practice, the calculations are quite easy.

\begin{example} \label{casedexample}
We consider case {(e)} from \cite[Table 1]{ckwzi}. Let $A = \Bbbk\langle u,v \rangle/\langle u^2 - v^2 \rangle = \scrD(\sfw,0)$, where $V = A_1 = \sspan\{ u,v\}$ and $\sfw = u^2 - v^2$. Let $G = D_n$ be a dihedral group of order $2n$, with presentation
\begin{align*}
D_n = \langle g,h \mid g^2 = h^2 = (gh)^n = 1 \rangle.
\end{align*}
Then $A$ and $T_\Bbbk(V)$ are $G$-graded by setting $\deg_G u = g$ and $\deg_G v = h$, which gives rise to a left action of $H = (\Bbbk G)^*$ on $A$ and $T_\Bbbk(V)$. In particular, by Corollary \ref{hdetdual}, the $H$-action has trivial homological determinant since $\deg_G \sfw = 1_G$. \\
\indent We first calculate the McKay quiver $Q$ of the action of $H$ on $A$. By Theorem \ref{dualgroupthm}, the vertices of $Q$ are labelled by the group elements. The arrows are of the following form, where here $x$ is some arbitrary element of $G$:
\begin{align*}
x \xrightarrow{\phantom{u}u\phantom{u}} g^{-1} x = gx, \qquad x \xrightarrow{\phantom{u}v\phantom{u}} h^{-1} x = hx,
\end{align*}
In the above, we have adorned the first arrow with $u$ since $\deg_G u = g$, and similarly for the second arrow. To write these arrows more explicitly, it will be convenient to write the $2n$ elements of $G$ in the following form:
\begin{align*}
G = \{ (hg)^i \mid 0 \leqslant i \leqslant n-1 \} \cup \{ g(hg)^i \mid 0 \leqslant i \leqslant n-1 \}.
\end{align*}
The arrows then take the following form:
\begin{gather*}
(hg)^i \xrightarrow{\phantom{u}u\phantom{u}} g \cdot (hg)^i = g(hg)^i, \qquad (hg)^i \xrightarrow{\phantom{u}v\phantom{u}} h \cdot (hg)^i = g(hg)^{i-1}, \\
g (hg)^i \xrightarrow{\phantom{u}u\phantom{u}} g \cdot g(hg)^i = (hg)^i, \qquad g (hg)^i \xrightarrow{\phantom{u}v\phantom{u}} h \cdot g (hg)^i = (hg)^{i+1}.
\end{gather*}
For example, below we show the McKay quiver when $n=3$, and on the right we provide a relabelling of the vertices and arrows:

\begin{center}
\begin{tikzpicture}[->,>=stealth,thick,scale=1]

\node[regular polygon,regular polygon sides=6,minimum size=1.05cm] (0) at (4*360/6:  1.6cm) {\phantom{0}};
\node[regular polygon,regular polygon sides=6,minimum size=1.05cm]  (1) at (3*360/6:  1.6cm) {\phantom{0}};
\node[regular polygon,regular polygon sides=6,minimum size=1.05cm]  (2) at (2*360/6:  1.6cm) {\phantom{0}};
\node[regular polygon,regular polygon sides=6,minimum size=1.05cm]  (3) at (1*360/6:  1.6cm) {\phantom{0}};
\node[regular polygon,regular polygon sides=6,minimum size=1.05cm]  (4) at (0*360/6:  1.6cm) {\phantom{0}};
\node[regular polygon,regular polygon sides=6,minimum size=1.05cm]  (5) at (-1*360/6:  1.6cm) {\phantom{0}};

\node at (4*360/6:  1.5cm) {$(hg)^2$};
\node at (3*360/6:  1.5cm) {$g(hg)^2$};
\node at (2*360/6:  1.5cm) {$1_G$};
\node at (1*360/6:  1.5cm) {$g$};
\node at (0*360/6:  1.5cm) {$hg$};
\node at (-1*360/6:  1.5cm) {$ghg$};



\draw (0.105) to (1.-45) ;
\draw (1.45) to (2.255) ;
\draw (2.-15) to (3.195);
\draw (3.-75) to (4.135) ;
\draw (4.225) to (5.75) ;
\draw (5.165) to (0.15) ;

\draw (1.285) to (0.135);
\draw (2.225) to (1.75);
\draw (3.165) to (2.15) ;
\draw (4.105) to (3.-45);
\draw (5.45) to (4.255);
\draw (0.-15) to (5.195);

\node at (30+4*360/6:  1.05cm) {$v$}; 
\node at (30+3*360/6:  1.05cm) {$u$};
\node at (30+2*360/6:  1.05cm) {$v$};
\node at (30+1*360/6:  1.05cm) {$u$}; 
\node at (30+0*360/6:  1.05cm) {$v$};
\node at (30-1*360/6:  1.05cm) {$u$};

\node at (30+4*360/6:  1.7cm) {$v$}; 
\node at (30+3*360/6:  1.7cm) {$u$};
\node at (30+2*360/6:  1.7cm) {$v$};
\node at (30+1*360/6:  1.7cm) {$u$}; 
\node at (30+0*360/6:  1.7cm) {$v$};
\node at (30-1*360/6:  1.7cm) {$u$};

\node at (30+4*360/6:  1.85cm) {\phantom{777}}; 
\node at (30+1*360/6:  1.85cm) {\phantom{777}}; 

\end{tikzpicture}
\hspace{10pt}
\begin{tikzpicture}[->,>=stealth,thick,scale=1]

\node[regular polygon,regular polygon sides=6,minimum size=0.8cm] (0) at (4*360/6:  1.6cm) {\phantom{0}};
\node[regular polygon,regular polygon sides=6,minimum size=0.8cm]  (1) at (3*360/6:  1.6cm) {\phantom{0}};
\node[regular polygon,regular polygon sides=6,minimum size=0.8cm]  (2) at (2*360/6:  1.6cm) {\phantom{0}};
\node[regular polygon,regular polygon sides=6,minimum size=0.8cm]  (3) at (1*360/6:  1.6cm) {\phantom{0}};
\node[regular polygon,regular polygon sides=6,minimum size=0.8cm]  (4) at (0*360/6:  1.6cm) {\phantom{0}};
\node[regular polygon,regular polygon sides=6,minimum size=0.8cm]  (5) at (-1*360/6:  1.6cm) {\phantom{0}};

\node at (4*360/6:  1.5cm) {$4$};
\node at (3*360/6:  1.5cm) {$5$};
\node at (2*360/6:  1.5cm) {$0$};
\node at (1*360/6:  1.5cm) {$1$};
\node at (0*360/6:  1.5cm) {$2$};
\node at (-1*360/6:  1.5cm) {$3$};

\draw (0.100) to (1.-40) ;
\draw (1.40) to (2.260) ;
\draw (2.-20) to (3.200);
\draw (3.-80) to (4.140) ;
\draw (4.220) to (5.80) ;
\draw (5.160) to (0.20) ;

\draw (1.280) to (0.140);
\draw (2.220) to (1.80);
\draw (3.160) to (2.20) ;
\draw (4.100) to (3.-40);
\draw (5.40) to (4.260);
\draw (0.-20) to (5.200);



\node at (30+4*360/6:  1.05cm) {$d$}; 
\node at (30+3*360/6:  1.05cm) {$e$};
\node at (30+2*360/6:  1.05cm) {$f$};
\node at (30+1*360/6:  1.05cm) {$a$}; 
\node at (30+0*360/6:  1.05cm) {$b$};
\node at (30-1*360/6:  1.05cm) {$c$};

\node at (30+4*360/6:  1.8cm) {$D$}; 
\node at (30+3*360/6:  1.8cm) {$E$};
\node at (30+2*360/6:  1.8cm) {$F$};
\node at (30+1*360/6:  1.8cm) {$A$}; 
\node at (30+0*360/6:  1.8cm) {$B$};
\node at (30-1*360/6:  1.8cm) {$C$};

\node at (30+4*360/6:  1.85cm) {\phantom{777}}; 
\node at (30+1*360/6:  1.85cm) {\phantom{777}}; 

\end{tikzpicture}
\end{center}
\indent We now determine $\Phi$ using the recipe from Theorem \ref{dualgroupthm}. Since $\sfw \in V^{\otimes 2}$, the paths in $\Phi$ have length two, and since the homological determinant is trivial, the only ones appearing with a nonzero coefficient are loops. Now, $\sfw = u^2 - v^2$, so $e_{\tau(i)} \Phi e_{i} = e_i \Phi e_i$ consists of those paths from vertex $i$ to itself which correspond to the terms $u^2$ and $v^2$, with respective coefficient $+1$ and $-1$. For example, $e_0 \Phi e_0 = aA - Ff$, since $a$ and $A$ both correspond to the element $u$, while $f$ and $F$ both correspond to $v$. Continuing in this way, we find that
\begin{align*}
\Phi = aA - Ff - bB + Aa + cC - Bb - dD + Cc + eE - Dd - fF + Ee,
\end{align*}
which is a twisted superpotential where the twist is the identity. Therefore $A \hash H$ is isomorphic to $\Lambda = \scrD(\Phi,0)$ by Theorem \ref{dualgroupthm}. The relations are obtained by formal differentiation with respect to paths of length 0, i.e.\ the vertices. Therefore the relations in $\Lambda$ are simply
\begin{align*}
aA = Ff, \quad bB = Aa, \quad cC = Bb, \quad dD = Cc, \quad eE = Dd, \quad fF = Ee;
\end{align*}
that is, the loops of length two at a given vertex are equal. In particular, $\Lambda$ is the preprojective algebra of an $\widetilde{\mathbb{A}}_{5}$ quiver. \\
\indent By a similar argument, for $n \geqslant 3$, the McKay quiver of the pair $(A,H)$ is the double of a type $\widetilde{\mathbb{A}}_{2n-1}$ quiver, and the relations in $\Lambda$ say that the loops of length two at a given vertex are equal. That is, $\Lambda$ is the preprojective algebra of an $\widetilde{\mathbb{A}}_{2n-1}$ quiver.

\end{example}

\begin{example}
As a second example, let 
\begin{align*}
A = \frac{\Bbbk\langle u,v \rangle}{\langle u^2v - vu^2, \hspace{3pt} v^2u - uv^2 \rangle},
\end{align*}
a down-up algebra. By Example \ref{downupisderquot}, we know that $A \cong \scrD(\sfw,1)$, where $\sfw = uv^2u - u^2v^2 + v u^2 v - v^2u^2$. If we let $G = D_4$, the dihedral group of order 8, with presentation
\begin{align*}
G = \langle g,h \mid g^4 = h^2 = 1, hg = g^3h \rangle,
\end{align*}
then $A$ can be $G$-graded by setting $\deg_G u = g$ and $\deg_G v = h$. This gives a left action of $H = (\Bbbk D_4)^*$ on both $A$ and $\Bbbk \langle u,v \rangle = T_\Bbbk(V)$, where $V = \sspan\{u,v\}$. By Corollary \ref{hdetdual}, we know that the action of $H$ on $A$ has nontrivial homological determinant since
\begin{align*}
\deg_G \sfw = \deg_G uv^2u = g h^2 g = g^2 \neq 1_G.
\end{align*}
\indent To determine the McKay quiver $Q$ of the action of $H$ on $A$, we recall that the vertices correspond to elements of $G$, and that the arrows are of the form
\begin{align*}
x \xrightarrow{\phantom{u}u\phantom{u}} g^{-1} x = g^3x, \qquad x \xrightarrow{\phantom{u}v\phantom{u}} h^{-1} x = hx,
\end{align*}
for all $x \in G$. It is then easy to check that the McKay quiver has the form given on the left, and where we provide a relabelling on the right:

\begin{center}
\begin{tikzpicture}[->,>=stealth,thick,scale=1]

\def \tol {65}

\node[circle,minimum size = 0.68cm] (0) at (0,0) {};
\node[circle,minimum size = 0.68cm] (3) at (1.8,1.8) {};
\node[circle,minimum size = 0.68cm] (4) at (1.8,0) {};
\node[circle,minimum size = 0.68cm] (1) at (1.8,-1.8) {};
\node[circle,minimum size = 0.68cm] (5) at (3.6,1.8) {};
\node[circle,minimum size = 0.68cm] (2) at (3.6,0) {};
\node[circle,minimum size = 0.68cm] (7) at (3.6,-1.8) {};
\node[circle,minimum size = 0.68cm] (6) at (5.4,0) {};

\node[circle,minimum size = 0.68cm] at (0,0) {$1_G$};
\node[circle,minimum size = 0.68cm] at (1.8,1.8) {$g^3$};
\node[circle,minimum size = 0.68cm] at (1.8,0) {$h$};
\node[circle,minimum size = 0.68cm] at (1.8,-1.8) {$g$};
\node[circle,minimum size = 0.68cm] at (3.6,1.8) {$gh$};
\node[circle,minimum size = 0.68cm] at (3.6,0) {$g^2$};
\node[circle,minimum size = 0.68cm] at (3.6,-1.8) {$g^3h$};
\node[circle,minimum size = 0.68cm] at (5.4,0) {$g^2h$};

\draw (0.{90-\tol}) to node[midway,fill=white,scale=0.8]{$v$} (4.{90+\tol});
\draw (4.{270-\tol}) to node[midway,fill=white,scale=0.8]{$v$} (0.{270+\tol});
\draw (2.{90-\tol}) to node[midway,fill=white,scale=0.8]{$v$} (6.{90+\tol});
\draw (6.{270-\tol}) to node[midway,fill=white,scale=0.8]{$v$} (2.{270+\tol});
\draw (3.{90-\tol}) to node[midway,fill=white,scale=0.8]{$v$} (5.{90+\tol});
\draw (5.{270-\tol}) to node[midway,fill=white,scale=0.8]{$v$} (3.{270+\tol});
\draw (1.{90-\tol}) to node[midway,fill=white,scale=0.8]{$v$} (7.{90+\tol});
\draw (7.{270-\tol}) to node[midway,fill=white,scale=0.8]{$v$} (1.{270+\tol});

\draw (0.70) to node[midway,fill=white,scale=0.8]{$u$} (3.200);
\draw (6.110) to node[midway,fill=white,scale=0.8]{$u$} (5.340);
\draw (1.160) to node[midway,fill=white,scale=0.8]{$u$} (0.290);
\draw (7.20) to node[midway,fill=white,scale=0.8]{$u$} (6.250);

\draw (3.300) to node[pos=0.3,fill=white,scale=0.8]{$u$} (2);
\draw (5.240) to node[pos=0.3,fill=white,scale=0.8]{$u$} (4);
\draw (4) to node[pos=0.7,fill=white,scale=0.8]{$u$} (7.120);
\draw (2) to node[pos=0.7,fill=white,scale=0.8]{$u$} (1.60);

\end{tikzpicture}
\hspace{10pt}
\begin{tikzpicture}[->,>=stealth,thick,scale=1]

\def \tol {65}

\node[circle,minimum size = 0.65cm] (0) at (0,0) {};
\node[circle,minimum size = 0.65cm] (3) at (1.8,1.8) {};
\node[circle,minimum size = 0.65cm] (4) at (1.8,0) {};
\node[circle,minimum size = 0.65cm] (1) at (1.8,-1.8) {};
\node[circle,minimum size = 0.65cm] (5) at (3.6,1.8) {};
\node[circle,minimum size = 0.65cm] (2) at (3.6,0) {};
\node[circle,minimum size = 0.65cm] (7) at (3.6,-1.8) {};
\node[circle,minimum size = 0.65cm] (6) at (5.4,0) {};

\node[circle,minimum size = 0.65cm] at (0,0) {$0$};
\node[circle,minimum size = 0.65cm] at (1.8,1.8) {$3$};
\node[circle,minimum size = 0.65cm] at (1.8,0) {$4$};
\node[circle,minimum size = 0.65cm] at (1.8,-1.8) {$1$};
\node[circle,minimum size = 0.65cm] at (3.6,1.8) {$5$};
\node[circle,minimum size = 0.65cm] at (3.6,0) {$2$};
\node[circle,minimum size = 0.65cm] at (3.6,-1.8) {$7$};
\node[circle,minimum size = 0.65cm] at (5.4,0) {$6$};

\draw (0.{90-\tol}) to node[midway,fill=white,scale=0.8]{$b$} (4.{90+\tol});
\draw (4.{270-\tol}) to node[midway,fill=white,scale=0.8]{$B$} (0.{270+\tol});
\draw (2.{90-\tol}) to node[midway,fill=white,scale=0.8]{$c$} (6.{90+\tol});
\draw (6.{270-\tol}) to node[midway,fill=white,scale=0.8]{$C$} (2.{270+\tol});
\draw (3.{90-\tol}) to node[midway,fill=white,scale=0.8]{$D$} (5.{90+\tol});
\draw (5.{270-\tol}) to node[midway,fill=white,scale=0.8]{$d$} (3.{270+\tol});
\draw (1.{90-\tol}) to node[midway,fill=white,scale=0.8]{$E$} (7.{90+\tol});
\draw (7.{270-\tol}) to node[midway,fill=white,scale=0.8]{$e$} (1.{270+\tol});

\draw (0.70) to node[midway,fill=white,scale=0.8]{$f$} (3.200);
\draw (6.110) to node[midway,fill=white,scale=0.8]{$J$} (5.340);
\draw (1.160) to node[midway,fill=white,scale=0.8]{$K$} (0.290);
\draw (7.20) to node[midway,fill=white,scale=0.8]{$\ell$} (6.250);

\draw (3.300) to node[pos=0.3,fill=white,scale=0.8]{$F$} (2);
\draw (5.240) to node[pos=0.3,fill=white,scale=0.8]{$j$} (4);
\draw (4) to node[pos=0.7,fill=white,scale=0.8]{$L$} (7.120);
\draw (2) to node[pos=0.7,fill=white,scale=0.8]{$k$} (1.60);

\end{tikzpicture}
\end{center}

\indent We now determine $\Phi$. Since the homological determinant is nontrivial, the paths appearing in $\Phi$ start and end at different vertices. We already saw that $\deg_G\sfw = g^2$, which means that such a path beginning at a vertex labelled by $x \in G$ must end at the vertex labelled by $(g^2)^{-1} x = g^2 x$. For example, this means that $e_0 \Phi e_2$ is nonzero. Now observe that the path $fDdF$ from vertex $0$ to vertex $2$ in the right hand quiver corresponds to the element $uv^2u$ in the left hand quiver. Repeating this for the other terms in $\sfw$, we can obtain $e_0 \Phi e_2$:
\begin{align*}
\sfw = uv^2u - u^2v^2 + v u^2 v - v^2u^2 \quad \leadsto \quad e_0 \Phi e_2 = fDdF - fFcC + bL\ell C - bBfF.
\end{align*}
Continuing in this way, we find that
\begin{align*}
\Phi &= fDdF - fFcC + bL\ell C - bBfF 
      + KbBf - KfDd + E\ell Jd - EeKf \\
     &+ kEeK - kKbB + cJjB - cCkK 
      + FcCk - FkEe + DjLe - DdFk \\
     &+ LeE\ell - L\ell Cc + BfFc - BbL\ell 
      + jBbL - jLeE + dFkE - dDjL \\
     &+ JdDj - JjBb + CkKb - CcJj 
      + \ell CcJ - \ell JdD + eKfD - eE\ell J.
\end{align*}
This is a twisted superpotential, where the twist $\tau$ acts on the arrows as follows:
\begin{align*}
b \longleftrightarrow -c, \hspace{8pt} B \longleftrightarrow -C, \hspace{8pt} d \longleftrightarrow -e, \hspace{8pt} D \longleftrightarrow -E, \hspace{8pt} f \longleftrightarrow -k, \hspace{8pt} F \longleftrightarrow -K, \hspace{8pt} j \longleftrightarrow -\ell, \hspace{8pt} J \longleftrightarrow -L.
\end{align*}
By Theorem \ref{dualgroupthm}, we find that $A \hash H$ is isomorphic to $\Lambda = \scrD(\Phi,1)$. To obtain the relations in $\Lambda$, we formally differentiate $\Phi$ on the left with respect to each of the paths of length 1, i.e.\ the arrows. We therefore find that $A \hash H$ is isomorphic to the path algebra of the above quiver with the following sixteen relations:
\begin{align*}
\begin{array}{cccccc}
bBf = fDd, & bLl = fFc, & BbL = LeE, & BfF = L\ell C, & cCk = kEe, & cJj = kKb, \\
CcJ = JdD, & CkK = JjB, & dDj = jBb, & dFk = jLe, & DdF = FcC, & DjL = FkE, \\
& eE\ell = \ell Cc, & eKf = \ell Jd, & EeK = KbB, & E\ell J = KfD. &
\end{array}
\end{align*}

\end{example}

\subsection{Calculating $\Lambda$ when $H = \Bbbk G$ and $G$ is abelian}
Suppose that $G \leqslant \GL(r,\Bbbk)$ is a finite abelian group acting homogeneously on $A = \scrD(\sfw,\ell-m)$, where $\sfw \in V^{\otimes \ell}$ and $V = \sspan\{ v_1, \dots, v_r \}$. In particular, we can diagonalise $G$ via a change of basis matrix $P \in \GL(r,\Bbbk)$ to obtain $G' = P^{-1} G P$, although this also affects $\sfw$ and $A$. In particular, the element $\sfw$ is sent to $\sfw' = P^{-1} \sfw$, and the algebra $A' \coloneqq \scrD(\sfw',\ell-m)$ is isomorphic to $A$. \\
\indent Therefore, we may as well assume that $G \leqslant \GL(r,\Bbbk)$ is a finite abelian group where every element of $G$ is diagonal, and that $G$ acts on $A = \scrD(\sfw,\ell-m)$, where $\sfw \in V^{\otimes \ell}$ and $V = \sspan\{ v_1, \dots, v_r\}$. Now, $G$ has $|G| = n + 1$ irreducible representations $\{V_0, V_1, \dots, V_n \}$, and for each $j$ we have $\Bbbk v_j \cong V_{i_j}$ for some representation $V_{i_j}$. Since $G$ is abelian, the set of irreducible representations forms a group $\widehat{G}$ under the tensor product, and this group is isomorphic to $G$. Letting $\varphi : \widehat{G} \to G$ be an isomorphism, we can define a $G$-grading of $A$ and $T_\Bbbk(V)$ by setting 
\begin{align*}
\deg_G v_j = \varphi(V_{i_j}).
\end{align*}
This gives an action of $H = (\Bbbk G)^*$ on $A$ and $T_\Bbbk(V)$, and there is an isomorphism $A \hash G \cong A \hash H$. We can therefore determine the algebra $\Lambda$ for $A \hash G$ using Theorem \ref{dualgroupthm}

\begin{example} \label{bgexamples}
We consider cases {(b)} and {(g)} from \cite[Table 1]{ckwzi}. Fix $q \in \Bbbk^\times$, and let $A = \Bbbk_q[u,v] = \Bbbk \langle u,v \rangle/ \langle vu-quv \rangle$ for some $q \in \Bbbk^\times$, which is $2$-Koszul and AS regular. Also let 
\begin{align*}
G = C_n = \left \langle \hspace{4pt} \underbrace{\hspace{-5pt}\begin{pmatrix} \omega & 0 \\ 0 & \omega^{-1} \end{pmatrix} \hspace{-5pt}}_{=g} \hspace{4pt} \right \rangle
\end{align*}
be a faithful representation of the cyclic group of order $n$, where $\omega$ is a primitive $n$th root of unity. Here $A = \scrD(\sfw,0)$, where $V = \sspan\{u,v\}$ and $\sfw = vu - quv \in V^{\otimes 2}$. The irreducible representations of $G$ are
\begin{align*}
V_i = \sspan\{ v_i \}, \quad \text{where} \quad g \cdot v_i = \omega^i v_i, 
\end{align*}
for $0 \leqslant i < n$; in particular, $V \cong V_1 \oplus V_{n-1}$. There is an isomorphism
\begin{align*}
\varphi : \widehat{G} \to G, \qquad V_i \mapsto g^i,
\end{align*}
so $A$ is $G$-graded by setting $\deg_G u = g$ and $\deg_G v = g^{-1}$. This gives an action of $H = (\Bbbk G)^*$ on $A$, and $A \hash G \cong A \hash H$. \\
\indent The McKay quiver of the action of $H$ on $A$ has vertices labelled by the elements of $G$, and the arrows are of the form 
\begin{align*}
g^i \xrightarrow{\phantom{u}u\phantom{u}} g^{-1} g^i = g^{i-1}, \qquad g^i \xrightarrow{\phantom{u}v\phantom{u}} (g^{-1})^{-1} g^i = g^{i+1},
\end{align*}
for $0 \leqslant i < n$. Therefore the McKay quiver is as shown, where we provide a relabelling on the right: 
\begin{center}
\begin{tikzpicture}[->,>=stealth,thick,scale=1]

\node[regular polygon,regular polygon sides=6,minimum size=0.8cm] (0) at (4*360/6:  1.6cm) {\phantom{0}};
\node[regular polygon,regular polygon sides=6,minimum size=0.8cm]  (1) at (3*360/6:  1.6cm) {\phantom{0}};
\node[regular polygon,regular polygon sides=6,minimum size=0.8cm]  (2) at (2*360/6:  1.6cm) {\phantom{0}};
\node[regular polygon,regular polygon sides=6,minimum size=0.8cm]  (3) at (1*360/6:  1.6cm) {\phantom{0}};
\node[regular polygon,regular polygon sides=6,minimum size=0.8cm]  (4) at (0*360/6:  1.6cm) {\phantom{0}};
\node[regular polygon,regular polygon sides=6,minimum size=0.8cm]  (5) at (-1*360/6:  1.6cm) {\phantom{0}};

\node at (3*360/6:  1.5cm) {$g^{n-1}$};
\node at (2*360/6:  1.5cm) {$1_G$};
\node at (1*360/6:  1.5cm) {$g$};
\node at (0*360/6:  1.5cm) {$g^2$};

\draw (0.100) to (1.-40) ;
\draw (1.40) to (2.260) ;
\draw (2.-20) to (3.200);
\draw (3.-80) to (4.140) ;
\draw (4.220) to (5.80) ;

\draw (1.280) to (0.140);
\draw (2.220) to (1.80);
\draw (3.160) to (2.20) ;
\draw (4.100) to (3.-40);
\draw (5.40) to (4.260);

\draw[-,dash pattern={on 1pt off 2pt}] (0) to (5);

\node at (30+3*360/6:  1.05cm) {$v$};
\node at (30+2*360/6:  1.05cm) {$v$};
\node at (30+1*360/6:  1.05cm) {$v$}; 
\node at (30+0*360/6:  1.05cm) {$v$};
\node at (30-1*360/6:  1.05cm) {$v$};

\node at (30+3*360/6:  1.75cm) {$u$};
\node at (30+2*360/6:  1.75cm) {$u$};
\node at (30+1*360/6:  1.75cm) {$u$}; 
\node at (30+0*360/6:  1.75cm) {$u$};
\node at (30-1*360/6:  1.75cm) {$u$};

\node at (30+4*360/6:  1.85cm) {\phantom{777}}; 
\node at (30+1*360/6:  1.85cm) {\phantom{777}}; 

\end{tikzpicture}
\hspace{10pt}
\begin{tikzpicture}[->,>=stealth,thick,scale=1]

\node[regular polygon,regular polygon sides=6,minimum size=0.8cm] (0) at (4*360/6:  1.6cm) {\phantom{0}};
\node[regular polygon,regular polygon sides=6,minimum size=0.8cm]  (1) at (3*360/6:  1.6cm) {\phantom{0}};
\node[regular polygon,regular polygon sides=6,minimum size=0.8cm]  (2) at (2*360/6:  1.6cm) {\phantom{0}};
\node[regular polygon,regular polygon sides=6,minimum size=0.8cm]  (3) at (1*360/6:  1.6cm) {\phantom{0}};
\node[regular polygon,regular polygon sides=6,minimum size=0.8cm]  (4) at (0*360/6:  1.6cm) {\phantom{0}};
\node[regular polygon,regular polygon sides=6,minimum size=0.8cm]  (5) at (-1*360/6:  1.6cm) {\phantom{0}};

\node at (3*360/6:  1.5cm) {$n{-}1$};
\node at (2*360/6:  1.5cm) {$0$};
\node at (1*360/6:  1.5cm) {$1$};
\node at (0*360/6:  1.5cm) {$2$};

\draw (0.100) to (1.-40) ;
\draw (1.40) to (2.260) ;
\draw (2.-20) to (3.200);
\draw (3.-80) to (4.140) ;
\draw (4.220) to (5.80) ;

\draw (1.280) to (0.140);
\draw (2.220) to (1.80);
\draw (3.160) to (2.20) ;
\draw (4.100) to (3.-40);
\draw (5.40) to (4.260);

\draw[-,dash pattern={on 1pt off 2pt}] (0) to (5);

\node at (-0.63,-0.54) {$\alpha_{n{-}2}$};
\node at (-0.63,0.50) {$\alpha_{n{-}1}$};
\node at (30+1*360/6:  1.05cm) {$\alpha_0$}; 
\node at (0.85,0.52) {$\alpha_1$};
\node at (0.85,-0.52) {$\alpha_2$};

\node at (-1.7,0.89) {$\overline{\alpha}_{n{-}1}$};
\node at (30+1*360/6:  1.76cm) {$\overline{\alpha}_{0}$}; 
\node at (1.56,0.90) {$\overline{\alpha}_{1}$};
\node at (1.56,-0.90) {$\overline{\alpha}_{2}$};
\node at (-1.7,-0.91) {$\overline{\alpha}_{n-2}$};

\node at (30+4*360/6:  1.85cm) {\phantom{777}}; 
\node at (30+1*360/6:  1.85cm) {\phantom{777}}; 

\end{tikzpicture}
\end{center}

To determine $\Phi$, first note that the homological determinant of the action of $H$ on $A$ is trivial by \cite{ckwzbin}, but this also follows from the fact that $\deg_G \sfw = \id_G$. Therefore $\Phi$ consists of paths of length $\ell = 2$ which start and end at the same vertex. To determine $e_i \Phi e_i$, we note that $\alpha_i \overline{\alpha}_i$ corresponds to the element $vu$, while $\overline{\alpha}_{i-1} \alpha_{i-1}$ corresponds to the element $uv$, and since $\sfw = vu - quv$, we must have $e_i \Phi e_i = \alpha_i \overline{\alpha}_i - q \overline{\alpha}_{i-1} \alpha_{i-1}$. Therefore
\begin{align*}
\Phi = \sum_{i=0}^{n-1} \big( \alpha_i \overline{\alpha}_i - q \overline{\alpha}_{i-1} \alpha_{i-1} \big),
\end{align*}
where subscripts are read modulo $n$, if necessary. It follows that $A \hash H$ is isomorphic to $\Lambda = \scrD(\Phi,0)$, where the relations in $\Lambda$ are simply
\begin{align*}
\alpha_i \overline{\alpha}_i = q \overline{\alpha}_{i-1} \alpha_{i-1},
\end{align*}
for $0 \leqslant i < n$. In particular, when $q=1$, $\Lambda$ is the preprojective algebra of an $\widetilde{\mathbb{A}}_{n-1}$ quiver.
\end{example}

\begin{example} \label{L1example}
We consider case {(c)} from \cite[Table 1]{ckwzi}. Let $A = \Bbbk_{-1}[u,v] = \scrD(\sfw,0)$, where $\sfw = uv+vu$, and let the group
\begin{align*}
G = C_2 = \left \langle \hspace{4pt} \underbrace{\hspace{-5pt}\begin{pmatrix} 0 & 1 \\ 1 & 0 \end{pmatrix} \hspace{-5pt}}_{=g} \hspace{4pt} \right \rangle
\end{align*}
act on $A$ naturally. If we set $P = \frac{1}{\sqrt{2}} \begin{psmallmatrix} 1 & 1 \\ 1 & -1 \end{psmallmatrix}$ then
\begin{align*}
P^{-1}G P = \left \langle \begin{pmatrix} 1 & 0 \\ 0 & -1 \end{pmatrix} \right \rangle, \qquad P^{-1} \sfw = u^2 - v^2,
\end{align*}
and $\Bbbk \langle u,v \rangle/\langle u^2-v^2 \rangle = \scrD(P^{-1}\sfw,0) \cong A$. \\
\indent Relabelling, we may as well assume $A = \Bbbk \langle u,v \rangle/\langle u^2-v^2 \rangle$, $\sfw = u^2-v^2$, and that $G$ is generated by $g = \begin{psmallmatrix} 1 & 0 \\ 0 & -1 \end{psmallmatrix}$. Now, $G$ has two representations $V_0$ and $V_1$, where $V_0$ is trivial, and we have $V = \sspan\{ u,v \} \cong V_0 \oplus V_1$. There is an isomorphism
\begin{align*}
\varphi: \widehat{G} \to G, \qquad V_i \mapsto g^i,
\end{align*}
so $A$ is $G$-graded by setting $\deg_G u = 1_G$ and $\deg_G v = g$, and there is an isomorphism $A \hash G \cong A \hash (\Bbbk G)^*$. The McKay quiver of $A \hash (\Bbbk G)^*$ is then easily found using Theorem \ref{dualgroupthm}:

\begin{center}
\begin{tikzpicture}[->,>=stealth,thick,scale=1]

\node[minimum size=0.5cm] (0) at (0,0) {$\phantom{g}$};
\node at (0,0) {$1_G$};

\node [circle,minimum size=0.55cm](a) at (0,0) {};
\node [circle,minimum size=0.7cm](b) at ([{shift=(0:-0.4)}]a){};
\coordinate  (d) at (intersection 2 of a and b);
\coordinate  (c) at (intersection 1 of a and b);
 \tikzAngleOfLine(b)(d){\AngleStart}
 \tikzAngleOfLine(b)(c){\AngleEnd}
\draw[thick,<-]%
   let \p1 = ($ (b) - (d) $), \n2 = {veclen(\x1,\y1)}
   in   
     
     (d) arc (\AngleStart:\AngleEnd:\n2); 

\node[minimum size=0.5cm] (1) at (1.7,0) {$g$};

\node [circle,minimum size=0.55cm](aa) at (1.7,0) {};
\node [circle,minimum size=0.7cm](bb) at ([{shift=(0:0.4)}]aa){};
\coordinate  (cc) at (intersection 2 of aa and bb);
\coordinate  (dd) at (intersection 1 of aa and bb);
 \tikzAngleOfLine(bb)(dd){\AngleStart}
 \tikzAngleOfLine(bb)(cc){\AngleEnd}
\draw[thick,<-]%
   let \p1 = ($ (bb) - (dd) $), \n2 = {veclen(\x1,\y1)}
   in   
     
     (dd) arc (\AngleStart-360:\AngleEnd:\n2); 

\def \tol {32};
\draw (0.{0+\tol}) to node[fill=white]{$v$} (1.{180-\tol});
\draw (1.{180+\tol}) to node[fill=white]{$v$} (0.{0-\tol});

\node [fill=white] at (-1,0) {$u$};
\node [fill=white] at (1.7+1,0) {$u$};

\end{tikzpicture}
\hspace{10pt}
\begin{tikzpicture}[->,>=stealth,thick,scale=1]

\node[minimum size=0.5cm] (0) at (0,0) {$0$};

\node [circle,minimum size=0.55cm](a) at (0,0) {};
\node [circle,minimum size=0.7cm](b) at ([{shift=(0:-0.4)}]a){};
\coordinate  (d) at (intersection 2 of a and b);
\coordinate  (c) at (intersection 1 of a and b);
 \tikzAngleOfLine(b)(d){\AngleStart}
 \tikzAngleOfLine(b)(c){\AngleEnd}
\draw[thick,<-]%
   let \p1 = ($ (b) - (d) $), \n2 = {veclen(\x1,\y1)}
   in   
     
     (d) arc (\AngleStart:\AngleEnd:\n2); 

\node[minimum size=0.5cm] (1) at (1.7,0) {$1$};

\node [circle,minimum size=0.55cm](aa) at (1.7,0) {};
\node [circle,minimum size=0.7cm](bb) at ([{shift=(0:0.4)}]aa){};
\coordinate  (cc) at (intersection 2 of aa and bb);
\coordinate  (dd) at (intersection 1 of aa and bb);
 \tikzAngleOfLine(bb)(dd){\AngleStart}
 \tikzAngleOfLine(bb)(cc){\AngleEnd}
\draw[thick,<-]%
   let \p1 = ($ (bb) - (dd) $), \n2 = {veclen(\x1,\y1)}
   in   
     
     (dd) arc (\AngleStart-360:\AngleEnd:\n2); 

\def \tol {32};
\draw (0.{0+\tol}) to node[fill=white]{$b$} (1.{180-\tol});
\draw (1.{180+\tol}) to node[fill=white]{$B$} (0.{0-\tol});

\node [fill=white] at (-1,0) {$a$};
\node [fill=white] at (1.7+1,0) {$c$};

\end{tikzpicture}
\end{center}
Noting that $\sfw = u^2 - v^2$ and that the homological determinant is trivial, we can then immediately read of the superpotential $\Phi$:
\begin{align*}
\Phi = a^2 - bB + c^2 - Bb.
\end{align*}
It follows that the algebras $A \hash G$ and $\Lambda = \scrD(\Phi,0)$ are isomorphic. The relations in $\Lambda$ are
\begin{align*}
a^2 = bB, \quad c^2 = Bb.
\end{align*}
\end{example}

\begin{example} \label{KJexample}
As a final example, we consider case {(h)} from \cite[Table 1]{ckwzi}. Let $A = \Bbbk_{J}[u,v]$, where $\sfw = vu-uv-u^2$, and let the group
\begin{align*}
G = C_2 = \left \langle \hspace{4pt} \underbrace{\hspace{-5pt}\begin{pmatrix} -1 & 0 \\ 0 & -1 \end{pmatrix} \hspace{-5pt}}_{=g} \hspace{4pt} \right \rangle
\end{align*}
act on $A$ naturally. It is straightforward to see that this is equivalent to $G$-grading $A$ by setting $\deg_G u = g = \deg_G v$, giving an action of the Hopf algebra $(\Bbbk C_2)^*$ on $A$. The McKay quiver (and a relabelling) are as follows:

\begin{center}
\def \wiggle {25}
\begin{tikzpicture}[->,>=stealth,thick,scale=1]
\node[circle,minimum size=0.4cm] (0) at (0,0) {\phantom{0}};
\node[circle,minimum size=0.4cm] (1) at (2.5,0) {\phantom{0}};

\node[circle,minimum size=0.4cm] (0a) at (0,0.25) {\phantom{0}};
\node[circle,minimum size=0.4cm] (1a) at (2.5,0.25) {\phantom{0}};

\node[circle,minimum size=0.4cm] (0b) at (0,-0.25) {\phantom{0}};
\node[circle,minimum size=0.4cm] (1b) at (2.5,-0.25) {\phantom{0}};

\node at (0,0) {$1_G$};
\node at (2.5,0) {$g$};

\draw (0) to [out=\wiggle,in={180-\wiggle}] node[fill=white]{$v$} (1);
\draw (1a) to [out={180-\wiggle},in=\wiggle] node[fill=white]{$u$} (0a);

\draw (1) to [out={180+\wiggle},in={-\wiggle}] node[fill=white]{$v$} (0);
\draw (0b) to [out={-\wiggle},in={180+\wiggle}] node[fill=white]{$u$} (1b);

\end{tikzpicture}
\hspace{10pt}
\def \wiggle {25}
\begin{tikzpicture}[->,>=stealth,thick,scale=1]
\node[circle,minimum size=0.4cm] (0) at (0,0) {\phantom{0}};
\node[circle,minimum size=0.4cm] (1) at (2.5,0) {\phantom{0}};

\node[circle,minimum size=0.4cm] (0a) at (0,0.25) {\phantom{0}};
\node[circle,minimum size=0.4cm] (1a) at (2.5,0.25) {\phantom{0}};

\node[circle,minimum size=0.4cm] (0b) at (0,-0.25) {\phantom{0}};
\node[circle,minimum size=0.4cm] (1b) at (2.5,-0.25) {\phantom{0}};

\node at (0,0) {$0$};
\node at (2.5,0) {$1$};

\draw (0) to [out=\wiggle,in={180-\wiggle}] (1);
\draw (1a) to [out={180-\wiggle},in=\wiggle] (0a);

\draw (1) to [out={180+\wiggle},in={-\wiggle}] (0);
\draw (0b) to [out={-\wiggle},in={180+\wiggle}] (1b);

\draw node[fill=white] at (1.25, 0.68) {$A$};
\draw node[fill=white] at (1.25, 0.32) {$a$};

\draw node[fill=white] at (1.25, -0.32) {$b$};
\draw node[fill=white] at (1.25, -0.68) {$B$};

\end{tikzpicture}
\end{center}
It then quickly follows that the twisted superpotential defining $\Lambda$ is
\begin{align*}
\Phi = aA - Bb - BA + bB - Aa - AB.
\end{align*}
\end{example}

\subsection{Quantum Kleinian singularities} In \cite{ckwzbin}, the authors classified all pairs $(A,H)$ satisfying Hypothesis \ref{mainhypothesis}, where moreover $A$ is two-dimensional and the action of $H$ on $A$ has trivial homological determinant. These were further studied in \cite{ckwzi}, where the invariant rings $A^H$ were called \emph{quantum Kleinian singularities} due to their similarities with Kleinian singularities. Table \ref{qkstable} gives some of the details of the classification; full details can be found in \cite{ckwzi}. We remark that cases {(b)} and {(g)} were considered in Example \ref{bgexamples}, case {(c)} was considered in Example \ref{L1example}, case {(e)} was considered in Example \ref{casedexample}, and case {(h)} was considered in Example \ref{KJexample}. \\
\indent One of the main results of \cite{ckwzi} showed that the Auslander map is an isomorphism for every quantum Kleinian singularity. Additionally, presentations for the invariant rings $A^H$ were given, and it was shown in a number of cases that $A^H$ was isomorphic to the invariant ring of a commutative Kleinian singularity. We now show how these results follow for most of the cases using the quiver perspective (case {(a)} is well-known and therefore omitted). In what follows, given a pair $(A,H)$, we let $\Lambda$ denote an algebra obtained using Theorem \ref{pathalgthm}.


\begin{figure}[h]
\begin{tabular}{c|c|c|c|c}
Case & $A$ & $H$ & McKay quiver & Invariant ring $A^H$ \\ \hline\hline &&&& \\[-9pt]
{(a)} & $\Bbbk[u,v]$ & $\Bbbk G$, $G \leqslant \SL(2,\Bbbk)$ & $\widetilde{\mathbb{A}} \widetilde{\mathbb{D}} \widetilde{\mathbb{E}}$ & Commutative Kleinian singularity \\\hline &&&& \\[-9pt]
{(b)}, {(g)} & $\Bbbk_q[u,v]$ & $\Bbbk C_n$ & $\widetilde{\mathbb{A}}_{n-1}$ & $q$-deformed Type $\mathbb{A}_{n-1}$ singularity \\\hline &&&& \\[-9pt]
{(c)} & $\Bbbk_{-1}[u,v]$ & $\Bbbk C_2$ & $\widetilde{\mathbb{L}}_{1}$ & Noncommutative singularity \\\hline &&&& \\[-9pt]

\multirow{5}{*}{{(d)}\vspace{25pt}} & \multirow{5}{*}{$\Bbbk_{-1}[u,v]$\vspace{25pt}} & $\Bbbk D_n$, $n$ even  & $\widetilde{\mathbb{D}}_{\frac{n+4}{2}}$ & Type $\mathbb{D}_{\frac{n+4}{2}}$ singularity \\[-10pt] &&&& \\ \cline{3-5} &&&& \\[-9pt]

&  & $\Bbbk D_n$, $n$ odd  & $\widetilde{\mathbb{DL}}_{\frac{n+1}{2}}$ & Noncommutative singularity \\[-9pt] &&&& \\ \hline &&&& \\[-9pt]
{(e)} & $\frac{\Bbbk \langle u,v \rangle}{\langle u^2-v^2 \rangle}$ & $(\Bbbk D_n)^*$ & $\widetilde{\mathbb{A}}_{2n-1}$ & Type $\mathbb{A}_{2n-1}$ singularity \\[3pt]\hline &&&& \\[-9pt]
{(f)} & $\Bbbk_{-1}[u,v]$ & $\mathcal{D}(G)^*$ & $\widetilde{\mathbb{D}}_n$ or $\widetilde{\mathbb{E}}_n$ & Type $\mathbb{D}_{n}$ or $\mathbb{E}_{n}$ singularity \\\hline &&&& \\[-9pt]
{(h)} & $\Bbbk_{J}[u,v]$ & $\Bbbk C_2$ & $\widetilde{\mathbb{A}}_1$ & Noncommutative singularity 
\end{tabular}
\caption{The quantum Kleinian singularities.} \label{qkstable}
\end{figure}

\begin{thm}[{\cite[Theorem 4.1, Theorem 5.2]{ckwzi}}] \label{ckwzproof}
If $(A,H)$ is a pair from Table \ref{qkstable}, then the corresponding Auslander map is an isomorphism. Moreover, in cases \emph{{(d)}} ($n$ even), \emph{{(e)}}, and \emph{{(f)}}, the invariant ring $A^H$ is a commutative Kleinian singularity of the type corresponding to its McKay quiver.
\end{thm}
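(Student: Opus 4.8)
The plan is to feed every case in Table~\ref{qkstable} through Theorem~\ref{pathalgthm} and Corollary~\ref{austhmcor}, splitting the table according to whether the resulting quiver algebra is a preprojective algebra of an extended Dynkin quiver. First I would record three facts shared by all cases: each $A$ here is a two-dimensional AS regular algebra, so $\GKdim A = 2$ and $A$ is GK-Cohen-Macaulay; the homological determinant of the $H$-action on $A$ is trivial, as this is part of the definition of a quantum Kleinian singularity (see \cite{ckwzbin}); and hence, by Theorem~\ref{hdetthm} (and Corollary~\ref{hdetdual} in the cases $H = (\Bbbk G)^*$), the permutation $\tau$ of the vertices determined by $\Bbbk\sfw \otimes V_i \cong V_{\tau(i)}$ is the identity. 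By Theorem~\ref{pathalgthm}, $A \hash H$ is Morita equivalent to $\Lambda = \scrD(\Phi, \ell-m)$ on the McKay quiver $Q$ listed in the table, and by Corollary~\ref{austhmcor} the Auslander map for $(A,H)$ is an isomorphism if and only if $\GKdim \Lambda/\langle e_0 \rangle \leqslant 0$, i.e.\ $\Lambda/\langle e_0 \rangle$ is finite-dimensional.

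\emph{Cases \textnormal{(d)} ($n$ even), \textnormal{(e)}, and \textnormal{(f)}.} Here I would first identify $\Lambda$ with a preprojective algebra of the listed extended Dynkin quiver. In cases (d) ($n$ even) and (f) the McKay quiver is the double of a tree ($\widetilde{\mathbb{D}}$ or $\widetilde{\mathbb{E}}$) and $\tau = \id$, so Lemma~\ref{meshalgebralemma} gives $\Lambda \cong \Pi(Q')$ for the corresponding Dynkin orientation $Q'$; in case (e), Example~\ref{casedexample} already exhibits $\Lambda$ as the preprojective algebra of $\widetilde{\mathbb{A}}_{2n-1}$. The same preprojective algebra also arises, via Example~\ref{KSexample}, from the classical Kleinian pair $(\Bbbk[u,v], \Bbbk G)$ with $G \leqslant \SL(2,\Bbbk)$ the finite subgroup whose McKay quiver is $Q$ (cyclic of order $2n$ for (e), binary dihedral for (d) ($n$ even), binary dihedral or binary polyhedral for (f)). After rescaling the arrows so that the two presentations literally coincide (Remark~\ref{orbit}), Theorem~\ref{samelambda} applies: part (2) yields $A^H \cong \Bbbk[u,v]^G$, a commutative Kleinian singularity of the type named by $Q$, which is the ``moreover'' claim; and part (1) reduces the Auslander-map statement for $(A,H)$ to the same statement for $(\Bbbk[u,v], \Bbbk G)$, which is the classical theorem of Reiten--Van den Bergh \cite{reitenvdb} (equivalently, it follows from \cite{purity}, since a subgroup of $\SL(2,\Bbbk)$ contains no non-trivial pseudo-reflections).

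\emph{Cases \textnormal{(b)}/\textnormal{(g)}, \textnormal{(c)}, \textnormal{(d)} ($n$ odd), and \textnormal{(h)}.} For these I would work directly with the explicit presentation of $\Lambda = \scrD(\Phi, \ell-m)$: cases (b)/(g), (c) and (h) are computed in Examples~\ref{bgexamples}, \ref{L1example} and \ref{KJexample}, and case (d) with $n$ odd is handled by the same recipe (compute the $\widetilde{\mathbb{DL}}_{(n+1)/2}$ McKay quiver and the superpotential $\Phi$ via Remark~\ref{altmethod} and formula~(\ref{Phicoeffs})). In each case I would then pass to $\Lambda/\langle e_0\rangle$, which amounts to deleting the vertex $0$ and discarding the parts of the relations factoring through it; one checks that the surviving relations bound the lengths of paths, so $\Lambda/\langle e_0\rangle$ is finite-dimensional. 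For instance, in case (c) one obtains $\Lambda/\langle e_0\rangle \cong \Bbbk[c]/(c^2)$, and in case (b)/(g) one obtains the multiplicatively deformed preprojective algebra of the Dynkin quiver $\mathbb{A}_{n-1}$, whose finite-dimensionality follows from the standard ``slide the length-two detour to an end and kill it'' argument. Hence $\GKdim \Lambda/\langle e_0\rangle = 0 = \GKdim A - 2$, and the Auslander map is an isomorphism by Corollary~\ref{austhmcor}.

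\emph{Main obstacle.} The hard part will be the explicit determination of the twisted superpotential $\Phi$ in the cases where $\Lambda$ is not a preprojective algebra --- in particular the Jordan-plane case (h) and case (d) with $n$ odd --- since there one must carefully compute the relevant $H$-module maps and the coefficients using Remark~\ref{altmethod} and~(\ref{Phicoeffs}); and then the finite-dimensionality of $\Lambda/\langle e_0\rangle$, which for the $q$-deformed type $\mathbb{A}$ needs a short combinatorial argument on paths in the doubled Dynkin quiver. Secondary points to verify carefully are that $\tau = \id$ in every case and that the presentations of $\Lambda$ in the ``tree'' cases can be normalised via Remark~\ref{orbit} so that Theorem~\ref{samelambda} applies on the nose rather than only up to isomorphism.
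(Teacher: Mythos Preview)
Your proposal is correct and follows essentially the same architecture as the paper's proof: split the table into the cases where the McKay quiver is the double of a tree (use Lemma~\ref{meshalgebralemma}, Example~\ref{KSexample}, and Theorem~\ref{samelambda} together with \cite{purity} to transfer both claims from the classical Kleinian case) versus the remaining cases (check directly that $\Lambda/\langle e_0\rangle$ is finite-dimensional and invoke Corollary~\ref{austhmcor}).

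The one place the paper is slicker than what you propose is the finite-dimensionality arguments in cases (b)/(g) and (d) with $n$ odd. For (b)/(g), rather than running a ``slide to the end'' combinatorial argument on the $q$-deformed relations, the paper observes that after deleting vertex $0$ the underlying quiver becomes the double of a tree of type $\mathbb{A}_{n-1}$; hence Lemma~\ref{meshalgebralemma} applies to $\Lambda/\langle e_0\rangle$ itself and identifies it with the \emph{undeformed} preprojective algebra $\Pi(\mathbb{A}_{n-1})$, whose finite-dimensionality is then quoted from \cite{malkin}. Similarly, for (d) with $n$ odd the paper does not compute $\Phi$ from scratch via Remark~\ref{altmethod}; instead it adapts Lemma~\ref{meshalgebralemma} (or cites \cite[Theorem~7.2.11]{simon}) to identify $\Lambda$ with the preprojective algebra of a $\widetilde{\mathbb{DL}}_{(n+1)/2}$ quiver in the sense of \cite{malkin}, so that $\Lambda/\langle e_0\rangle$ is the preprojective algebra of an $\mathbb{L}$-type quiver and is again finite-dimensional by \cite[2.4 Corollary]{malkin}. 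This bypasses exactly the computations you flagged as the ``main obstacle.''
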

\begin{proof}
First consider case {(e)}. By Example \ref{casedexample}, the corresponding algebra $\Lambda$ is the preprojective algebra of an $\widetilde{\mathbb{A}}_{2n-1}$ quiver which, by Example \ref{KSexample}, is the same as the algebra obtained for a Type $\mathbb{A}_{2n-1}$ Kleinian singularity. Since, by \cite{purity}, the Auslander map is known to be an isomorphism for Kleinian singularities, it is also an isomorphism for case {(e)} by Theorem \ref{samelambda}. Moreover, $A^H$ is a Type $\mathbb{A}_{2n-1}$ Kleinian singularity, also by Theorem \ref{samelambda}. \\
\indent A similar argument applies to case {(d)} (when $n$ is even) and case {(f)}. By Lemma \ref{meshalgebralemma}, in each case the algebra $\Lambda$ corresponding to the pair $(A,H)$ is a preprojective algebra of an extended Dynkin quiver. The result now follows for these cases by the same argument as the previous paragraph. \\
\indent We now consider cases {(b)} and {(g)}. By Example \ref{bgexamples}, the algebra $\Lambda$ is the preprojective algebra of an $\widetilde{\mathbb{A}}_{2n-1}$ quiver. By Example \ref{bgexamples}, we have a presentation for $\Lambda$, and the underlying quiver is of Type $\widetilde{\mathbb{A}}_{n-1}$. The algebra $\Lambda/\langle e_0 \rangle$ is obtained from $\Lambda$ by deleting vertex $0$, and modifying the relations by deleting any path in a relation which passes through vertex $0$. In particular, $\Lambda/\langle e_0 \rangle$ satisfies the hypotheses of Lemma \ref{meshalgebralemma}, where $Q$ a Type $\mathbb{A}_{n-1}$ quiver, and so $\Lambda/\langle e_0 \rangle \cong \Pi(Q)$. This algebra is finite-dimensional by \cite[2.4 Corollary]{malkin}, so the Auslander map is an isomorphism by Corollary \ref{austhmcor}. \\
\indent For cases {(c)} and {(h)}, Examples \ref{L1example} and \ref{KJexample} allow us to see that $\Lambda/ \langle e_0 \rangle$ are two-dimensional and one-dimensional, respectively. Corollary \ref{austhmcor} then shows that the Auslander map is an isomorphism in these cases. \\
\indent The only remaining case is case {(d)} when $n$ is odd. By adapting the proof of Lemma \ref{meshalgebralemma}, one can show that $\Lambda$ is isomorphic to the preprojective algebra of a Type $\widetilde{\mathbb{DL}}_{\frac{n+1}{2}}$ quiver, in the sense of \cite[1.2]{malkin}, in which these are called quivers of Type $T$. (A little care is required here, since we have not defined the preprojective algebra when a quiver has loops. Alternatively, \cite[Theorem 7.2.11]{simon} gives a complete proof of this isomorphism.) The algebra $\Lambda/ \langle e_0 \rangle$ is then the preprojective algebra of an $\mathbb{L}_{\frac{n+1}{2}}$ quiver, which is finite-dimensional by \cite[2.4 Corollary]{malkin}. The claim now follows by Corollary \ref{austhmcor}.
\end{proof}

\bibliographystyle{amsalpha}
\bibliography{bibliography}

\end{document}